\DeclareMathOperator{\sech}{sech}
\newtheorem{theorem}{Theorem}[section]
\newtheorem{corollary}[theorem]{Corollary}
\newtheorem{lemma}[theorem]{Lemma}
\theoremstyle{definition}
\newtheorem{definition}[theorem]{Definition}
\theoremstyle{remark}
\newtheorem{remark}[theorem]{Remark}
\numberwithin{equation}{section}
\newcommand{\R}{\mathbb R}
\date{}
\begin{document}
	\maketitle
	
	\centerline{$^1$ Department of Mathematics,
		IME-USP}
	\centerline{Rua do Mat\~ao 1010, Cidade Universit\'aria, CEP 05508-090,
		S\~ao Paulo, SP, Brazil.}
	\centerline{\it angulo@ime.usp.
		br}
	
	\centerline{ $^2$ Institute of Mathematics,  Universidade Federal de Alagoas,
		Macei\'o, Brazil.}
	\centerline{\it  marcio.melo@im.ufal.br }

	\begin{abstract}
		In this work, we establish local well-posedness for the Korteweg-de Vries model on a balanced star graph with a structure represented by semi-infinite edges, by considering a boundary condition of $\delta$-type at the   {unique} graph-vertex. Also, we extend the linear instability result in 
 \cite{AC} to  one of nonlinear instability. For the proof of local well posedness theory the principal new ingredient is the utilization of the special solutions by Faminskii in the context of half-lines. As far as we are aware, this approach is being used for the first time in the context of star graphs and can potentially be applied to other boundary classes. In the case of the nonlinear instability result, the principal ingredients  are the linearized instability known result and the fact that data-to-solution map determined by the local theory is at least of class $C^2$.

	\end{abstract}

	\let\thefootnote\relax\footnotetext{\text{Mathematics  Subject  Classification (2000)}. Primary
		35Q51, 35Q53, 35J61; Secondary 47E05.\\
		\text{Key  words}. Korteweg-de Vries model, star graph, bumps, $\delta$-type, nonlinear  instability.}
	
	\section{Introduction}

The focus of our study to follow will be  the well-known Korteweg-de Vries equation  (KdV henceforth)
\begin{equation}\label{kdv}
	\partial_t u +\partial_x^3u+ \partial_x u+ 2u\partial_xu = 0 
	\end{equation}
	on a metric graph of balanced type (see Figure 2 below). We recall that  an evolution model on a metric graph  is equivalent to a system of PDEs defined on  the edges (intervals) with a  coupling  given exclusively through the boundary conditions at the vertices (known as the ``topology of the graph'') and  which will determine the dynamic on the network or graph. Moreover, the freedom in setting the topology in a graph  allows us to create different dynamics much closer to the real world applications.
\\	
We recall that the KdV was first derived by Korteweg and de-Vries \cite{KdV} in 1895 as a model for long waves propagating on a shallow water surface as well as provided an explanation of the existence of {\it the Great Wave of Translation} a phenomena first discovered by Scott Russell in 1834 \cite{Russell}. Actually, this  type  of solutions are known as solitary wave solutions  or soliton profiles (Boussinesq \cite{bous}, Kruskal and Zabusky \cite{Zabusky}). The KdV equation also arise naturally in the modeling of various types of wave phenomena in other physical context, such as, the nonlinear mass-spring system (FPU recurrence phenomenon), ion-acoustic waves in a collisionless plasma and magnetosonic waves in a magnetized plasma (see \cite{AbloCla} and the references therein).  In particular,  in 1987, Sigeo \cite{Sigeo} took the lead in combining nonlinear science with hemodynamics and derived the KdV equation for the velocity of blood flow. Also,  in  \cite{Crepeau} the KdV equation has been used  as a model to study blood pressure waves in large arteries.
	
Because of the range of its potential application, dynamics of the KdV equation from a mathematical context has been well studied in the last decades and a  marvelous quantity of manuscripts have been published. These studies have generally concentrated on aspects of the pure initial-value problem, namely, $u=u(x,t)$, $x\in \mathbb R$ and $t\geqq 0$, satisfying \eqref{kdv} with the initial condition $u(x,0)=f(x)$. Thus, the Cauchy problem for the KdV posed on the real axis, on the torus, on the half-lines and on a finite interval has been studied comprehensively (see \cite{b1,CK,Faminskii,Holmer,Jia,KPV2,Kishimoto,Killip} and the references therein). Also, closely related to these studies is the large-time asymptotic behavior of solutions of the KdV equation close to localized coherent structures such as the solitary waves solutions, namely, solutions of the KdV equation with the traveling-wave profile $u_s(x,t)=\phi_c(x-ct)$, $c>1$, where the profile $\phi_c$ is given by 
\begin{equation}\label{solitary}
\phi_c(\xi)= \frac{3}{2}(c-1)\sech^2\Big(\frac{\sqrt{c-1}}{2} \xi \Big),\quad \xi\in \mathbb R.
\end{equation}
The principal study associated to the wave profiles $\phi_c$ is the so-called {\it stability in shape or orbital stability},  namely, a slight perturbation of the profile $\phi_c$ will continue to resemble a solitary wave all of the time, rather than evolving into some other wave form (see Boussinesq \cite{bous}, Benjamin \cite{benjamin}, Bona,  P.  Souganidis and  Strauss \cite{bss}, Pego and  Weinstein \cite{Pego}. See also \cite{Angbook} for a comprehensive description of these results).

Studies for the KdV equation on networks or branched  structures has drawn attention in recent years. In Ammari and Crepeau \cite {Am} (see also \cite{Cerpa}) was established a control theory for the KdV  equation on a finite star-shaped network  which is in connection with the mathematical modeling of the human cardiovascular system, namely,  they considered a system formed by $N$-KdV equations in  variable $u_j$  in \eqref{kdv} posed on  bounded intervals $(0, L_j)$, $j=1,..., N$, and with specific boundary conditions at the vertex-graph $\nu=0$ and on the external nodes $L_j$. Moreover, recently in Cavalcante \cite{Cav1} was studied the  local well-posedness problem for the KdV  equation in Sobolev spaces $H^s(\mathcal Y)$ with low regularity on a $\mathcal Y$-junction graph with three semi-infinite edges given by one negative half-line and two positive half-lines attached to a common vertex $\nu=0$.

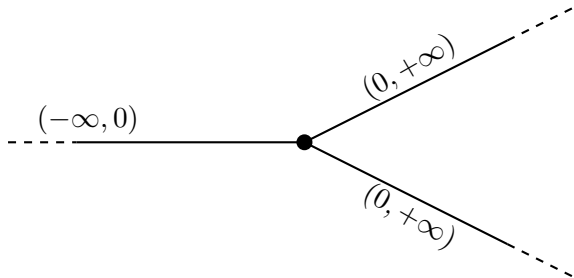
\begin{figure}[htp]\label{figure2}
		\centering 
		\begin{tikzpicture}[scale=3]
			\draw [thick, dashed] (-1.3,0)--(-1,0);
			\draw[thick](-1,0)--(0,0);
			\node at (-0.95,0.1){$(-\infty,0)$};
			
			%\draw [thick, dashed] (1.3,0)--(1,0);
			%\draw[thick](1,0)--(0,0);
			%\node at (0.95,0.1){$(0,+\infty)$};
			
			%\draw[thick](0,0)--(-0.89,0.45);
			%\node at (-0.46,0.33)[rotate=-3
			%0]{$(-\infty,0)$};
			%\draw [thick, dashed] (-0.89,0.45)--(-1.19,0.6);
			
			%\draw[thick](0,0)--(-0.89,-0.45);
			%\node at (-0.46,-0.33)[rotate=30]{$(-\infty,0)$};
			%\draw [thick, dashed] (-0.89,-0.45)--(-1.19,-0.6);
			%\fill (0,0)  circle[radius=1pt];

			\draw[thick](0,0)--(0.89,0.45);
			\node at (0.46,0.33)[rotate=3
			0]{$(0,+\infty)$};
			\draw [thick, dashed] (0.89,0.45)--(1.19,0.6);
			\draw[thick](0,0)--(0.89,-0.45);
			\node at (0.46,-0.33)[rotate=-30]{$(0,+\infty)$};
			\draw [thick, dashed] (0.89,-0.45)--(1.19,-0.6);
			\fill (0,0)  circle[radius=1pt];
		\end{tikzpicture}
		\caption{A $\mathcal Y$-junction graph}
	\end{figure}

Very recently, Mugnolo, Noja and Seifert \cite{MNS} obtained a characterization of all boundary conditions under which the
	Airy-type evolution equation 
	\begin{equation}\label{kdv0}
		\partial_{t}u_\bold e(x,t)=\alpha_ \bold e \partial_x ^3u_\bold e(x,t) + \beta_ \bold e \partial_x u_\bold e(x,t), \; \; t\in \mathbb{R},\; x=x_\bold e \in \bold e, \; \bold e\in \bold E,
	\end{equation}
	generates either a semigroup or a unitary group on a metric star graph $\mathcal G$. Here $\mathcal G$ is a structure represented by the set $
	\bold E\equiv \bold E_{-}\cup \bold E_{+}$
	where $\bold E_{+}$ and $\bold E_{-}$ are finite or countable collections of semi-infinite edges $\bold e$ parametrized by $(-\infty, 0)$ or $(0, +\infty)$, respectively. The half-lines are connected at a unique vertex $\nu=0$. 
In \eqref{kdv0} we are using  $u_\bold e (x,t)$ in the sense that $u_\bold e: I\times [0,T]\to \mathbb R$  for $x=x_{\bold e} \in I$ and $I$ is the half-line  determined by $\bold e$, and by abusing notation, we are using $x_\bold e \in \bold e$. Here  $(\alpha_ \bold e)_{\bold e\in \bold E}$ and  $(\beta_ \bold e)_{\bold e\in \bold E}$ are two sequences of real numbers.

Thus, one of the objectives of this work is to shed light on  the local well-posedness problem for the KdV  equation 
on a metric graph of balanced type, in other words,  we are interested in the case of the Airy operator
	\begin{equation}
		A_0: (u_{\bold e})_{\bold e \in \bold E}	\mapsto \left(\alpha_{\bold e}\frac{d}{dx^3}u_{\bold e}+\beta_{\bold e}\frac{d}{dx}u_{\bold e}\right)_{\bold e \in \bold E}
	\end{equation}
	defined on a metric graph $\mathcal G$ when the ingoing half-lines equal to outgoing half-lines ( $|\bold E_{+}|=|\bold E_{-}|= n$, see Figure 2). Thus, from  \cite{MNS} there many  possible skew-symmetric extensions $A_{ext}$ of $A_0$ and so  by Stone's theorem the solution of the following linear equation
	\begin{equation}\label{Alinear}
	\left\{ \begin{array}{ll}
			&z_t=A_{ext}z, \qquad t\in \mathbb R\\
			&z(0)=z_0\in D(A_{ext})
		\end{array}  \right.
	\end{equation}
	will be  given by a $C_0$-unitary group $z(t)=e^{tA_{ext}}z_0$.
	
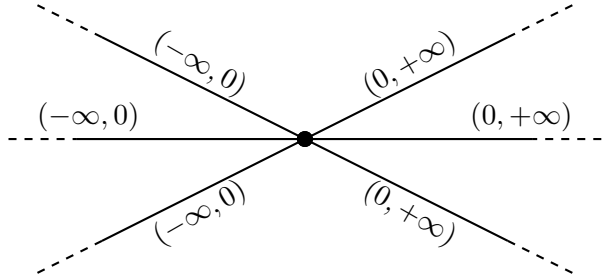
\begin{figure}[htp]\label{figure2}
		\centering 
		\begin{tikzpicture}[scale=3]
			\draw [thick, dashed] (-1.3,0)--(-1,0);
			\draw[thick](-1,0)--(0,0);
			\node at (-0.95,0.1){$(-\infty,0)$};
			
			\draw [thick, dashed] (1.3,0)--(1,0);
			\draw[thick](1,0)--(0,0);
			\node at (0.95,0.1){$(0,+\infty)$};
			
			\draw[thick](0,0)--(-0.89,0.45);
			\node at (-0.46,0.33)[rotate=-3
			0]{$(-\infty,0)$};
			\draw [thick, dashed] (-0.89,0.45)--(-1.19,0.6);
			
			\draw[thick](0,0)--(-0.89,-0.45);
			\node at (-0.46,-0.33)[rotate=30]{$(-\infty,0)$};
			\draw [thick, dashed] (-0.89,-0.45)--(-1.19,-0.6);
			\fill (0,0)  circle[radius=1pt];

			\draw[thick](0,0)--(0.89,0.45);
			\node at (0.46,0.33)[rotate=3
			0]{$(0,+\infty)$};
			\draw [thick, dashed] (0.89,0.45)--(1.19,0.6);
			\draw[thick](0,0)--(0.89,-0.45);
			\node at (0.46,-0.33)[rotate=-30]{$(0,+\infty)$};
			\draw [thick, dashed] (0.89,-0.45)--(1.19,-0.6);
			\fill (0,0)  circle[radius=1pt];
		\end{tikzpicture}
		\caption{A balanced star graph with $6$ edges}
	\end{figure}

	In the following we defined the pair $(A_{ext}, D(A_{ext}))$ which will  be of our interest  here.  By convenience of the reader, we start with some basic notation. For $\bold{u}=(u_{\bold e})_{\bold e \in \bold E}$, we denote $\bold{u}(0-)$ and $\bold{u}(0+)$ as
	$$
	\bold{u}(0-)=(u_{\bold e}(0-))_{\bold e \in \bold E_{-}} \ \text{and}\ \bold{u}(0+)\equiv(u_{\bold e}(0+))_{\bold e\in \bold{E}_+}.
	$$
	Similarly, we define $\bold{u}'(0-)$, $\bold{u}'(0+)$, $\bold{u}''(0-)$, and $\bold{u}''(0+)$. Therefore,  for $(A_0, D(A_0))$ with
	$$
	D(A_0)\equiv \bigoplus\limits_{\bold e\in  \bold E_{-} }C_c ^{\infty}(-\infty, 0)  \oplus \bigoplus\limits_{\bold e\in  \bold E_{+} }C_c ^{\infty}(0, +\infty),
	$$
	$iA_0$ is a densely defined  symmetric operator on  the Hilbert space $L^2(\mathcal{G})$, and for $Z\in \mathbb R-\{0\}$  we obtain a family $(A_Z, D(A_Z))$ of skew-self-adjoint extension of $(A_0, D(A_0))$ parametrized by $Z$, where
	\begin{equation}\label{domain8}
		\left\{ \begin{array}{ll}
			A_Z\bold{u}&= \Big (\alpha_ \bold e \frac{d^3}{dx^3}u_\bold e + \beta_ \bold e \frac{d}{dx} u_\bold e\Big)_{\bold e\in \bold E}, \qquad \bold{u}=(u_\bold e)_{\bold e\in \bold E}\\
			\\
			D(A_Z) &= \{\bold{u}\in H^3(\mathcal G): \bold{u}{}(0-)=\bold{u}(0+),\; \;\;\bold{u}'(0+)- \bold{u}'(0-)=Z\bold{u}(0-),\\
			&\hskip0.7in \frac{Z^2}{2}\bold{u}(0-)+Z\bold{u}'(0-)=\bold{u}''(0+)-\bold{u}''(0-)\},
		\end{array}  \right.
	\end{equation}
	with 
	$$
	H^3(\mathcal{G})=\bigoplus\limits_{\bold e\in  \bold E_{-} }H^3(-\infty, 0) \oplus \bigoplus\limits_{\bold e\in  \bold E_{+} } H^3(0, +\infty).
	$$
	Moreover,  for $(\alpha_ \bold e)_{\bold e\in \bold E}=(\alpha_{-}, \alpha_{+})$ ( $\alpha_{-}=(\alpha_ \bold e)_{\bold e\in \bold E_{-}}$, $\alpha_{+}=(\alpha_ \bold e)_{\bold e\in \bold E_{+}}$) and   $(\beta_ \bold e)_{\bold e\in \bold E}=(\beta_{-}, \beta_{+})$ we need to have $\alpha_{-}= \alpha_{+}$ and $\beta_{-}=\beta_{+}$. In particular, the system of boundary condition at the vertex $\nu=0$ of the graph $\mathcal G$,
	$$
	\bold{u}{}(0-)=\bold{u}(0+)\ \;\; \text{and}\ \;\; \bold{u}'(0+)- \bold{u}'(0-)=Z\bold{u}(0-)
	$$
	are called {\it $\delta$-interaction type on each two oriented half-lines}. A formula for the unitary-group generated by $(A_Z, D(A_Z))$ was obtained in \cite{AC}. We recall that as $iA_0$ is a symmetric operator with deficiency indices $(3n,3n)$, it follows from the classical von Neumann-Krein extension theory that the operator $(A_0, D(A_0))$ will admit a $9n^2$-parameter family of skew-self-adjoint extensions generating each one a unitary dynamics on $L^2(\mathcal G)$ associated to the linear problem \eqref{Alinear}. Our interest in the operator $A_Z$ defined above is due to the study of the non-linear instability of specific stationary solutions for the KdV with a profile in the domain $D(A_Z)$ (see Angulo and Cavalcante \cite{AC}).
	
The discussion is now turned more directly to the contributions in the present manuscript. We commence with the Cauchy problem. We note first that a space-time function $\bold{u}(x,t)$ on $\mathcal G \times [0,T]$ will be denoted as $\bold{u}(x,t)=(u_\bold e (x,t))_{\bold e\in \bold E}$. Thus, we are interested  in the local well-posedness theory for the following Cauchy problem for the KdV model on $\mathcal G$,
	\begin{equation}\label{kdv3}
		\begin{cases}
			\partial_{t}u_\bold e(x,t)=\alpha_ \bold e \partial_x ^3u_\bold e(x,t) + \beta_ \bold e \partial_xu_\bold e(x,t)+2 u_\bold e (x,t) \partial_x u_\bold e (x,t),\;\; \bold e\in \bold E,\ t\in (0,T) \\
			\bold{u} (x,0) = \bold{u}_0 (x) \in H^s(\mathcal{G})\cap \mathcal N_{0, Z}(s),
		\end{cases}
	\end{equation}
	with $s\geqq 1$, and $ \bold{u}$ satisfying,
		\begin{equation}\label{Zcondition}
			\begin{cases}
				\bold{u}(0-, t)=\bold{u}(0+,t), \;\;\text{in\ the\ sense\ of } H^{\frac{s+1}{3}}(0,T);\\ 
				\bold{u}'(0+,t)-\bold{u}'(0-,t)=Z \bold{u}(0-, t),\;\; \text{in\ the\ sense\ of } H^{\frac{s}{3}}(0,T);\\  
				\frac{Z^2}{2}\bold{u}(0-,t)+Z\bold{u}'(0-,t)=\bold{u}''(0+,t)-\bold{u}''(0-,t),\;\;\text{in\ the\ sense\ of } H^{\frac{s-1}{3}}(0,T),
			\end{cases}
		\end{equation}
{moreover}  the set $\mathcal N_{0, Z}(s)$ determines the following compatibility conditions on the vertex $\nu=0$,
	\begin{equation}\label{trace0}
	\mathcal N_{0, Z}(s)=	\begin{cases}
			\{ \bold{v}: \mathcal{G}\to \mathbb R:  \bold{v}(0-)=\bold{v}(0+)\}, & \text{if} \;\; s\in [1, \frac32]\\
			\{ \bold{v}: \mathcal{G}\to \mathbb R:   \bold{v}(0-)=\bold{v}(0+), \;\bold{v}'(0+)-\bold{v}'(0-)=Z \bold{v}(0-)\}, & \text{if} \;\;  s\in (\frac32, \frac52]\\
			\{ \bold{v}: \mathcal{G}\to \mathbb R:   \bold{v}(0-)=\bold{v}(0+), \;\bold{v}'(0+)-\bold{v}'(0-)=Z \bold{v}(0-),\\
			\hskip1.3in \frac{Z^2}{2}\bold{v}(0-)+Z\bold{v}'(0-)=\bold{v}''(0+)-\bold{v}''(0-)\}, &\text{if} \;\;  s>\frac52\\
		\end{cases}
	\end{equation}
	
We note by \eqref{domain8}, that $D(A_Z)\subset  \mathcal N_{0, Z}(s)$, for all $s\geqq 1$.
	
	For solving the Cauchy problem \eqref{kdv3}-\eqref{Zcondition} we will consider the following functional space $X_{s,b,\beta,\sigma}(T)$, $T>0$, $s\geqq 1$,
	\begin{equation}\label{X}
	\begin{aligned}
		X_{s,b,\beta,\sigma}(T)=&\left\{\bold{w}: \mathcal{G}\times [0,T]\to \mathbb R: \bold w \in C\left([0,T] ; H^s\left(\mathcal G\right)\right) \cap C\left(\mathcal G ; H^{\frac{s+1}{3}}\left([0,T]\right)\right) \cap \bold {\mathcal X}^{s, b, \beta, \sigma},\right. \\
		&\quad \quad \quad \left. \partial_x \bold{w} \in C\left(\mathcal G ; H^{\frac{s}{3}}\left([0,T]\right)\right), \partial_x^2\bold{w} \in C\left(\mathcal G ; H^{\frac{s-1}{3}}\left([0,T]\right)\right)\right\},
	\end{aligned}
	\end{equation}
	with $\bold {\mathcal X}^{s, b, \beta, \sigma}$ being the local Bourgain's space in \eqref{Bour}, and for $r\geqq 0$ we define $C\left(\mathcal G ; H^r\left([0,T]\right)\right) $ as
	\begin{equation}\label{trace2}
		\begin{aligned}
			C\left(\mathcal G ; H^r\left([0,T]\right)\right) =\{\bold{u}: \mathcal G \times [0,T]&\to \mathbb R: \; \text{for}\; \bold{u}=(u_\bold e (x,t))_{\bold e\in \bold E}\;\text{we have for each fixed}\; x\\
			&\text{and}\; \bold e\in \bold E, \; u_\bold e (x,\cdot)\in  H^r\left([0,T])\right\},
		\end{aligned}
	\end{equation}
	with the norm given by
$$
{\| \bold u\|_{C\left(\mathcal G ; H^{r}\left([0,T]\right)\right) }=\sum_{\bold e \in \bold E^+}}\|u_{\bold e}\|_{C(\R^+;H^s(0,T))}+\sum_{\bold e \in \bold E^-}\|u_{\bold e}\|_{C(\R^-;H^s(0,T))}.
$$	

\begin{remark}
In the spirit of Faminskii works \cite{Faminskii} and \cite{Faminskii2} on the context of half-lines, we get the required estimates on the space $X_{s,b,\beta,\sigma}\subset C([0,T];H^s)$ for some $(s,b,\beta,\sigma)$ satisfying  $s>0$, $b<\frac12$, $\sigma^\frac12$. In order to shorten the notation, in this paper we sometimes denote the space $X_{s,b,\beta,\sigma}$ only by $X_{s}$.
\end{remark}
We note that $X_s(T)$ becomes a Banach space with the norm
	\begin{equation*}
	\begin{split}
	\|\bold w\|_{X_s(T)}:=\|\bold w&\|_{C\left([0,T] ; H^s\left(\mathcal G\right)\right)}+\| \bold w\|_{C\left(\mathcal G ; H^{\frac{s+1}{3}}\left([0,T]\right)\right) }+\|\partial_x \bold w\|_{C\left(\mathcal G ; H^{\frac{s}{3}}\left([0,T]\right)\right) }\\
	&+\|\partial_x^2 \bold w\|_{C\left(\mathcal G ; H^{\frac{s-1}{3}}\left([0,T]\right)\right) }+\|\bold w\|_{\bold {\mathcal X}^{s, b, \beta, \sigma}}.
	\end{split}
	\end{equation*}

	Our   local well-posedness theory for the  Cauchy problem in \eqref{kdv3}-\eqref{Zcondition}  is the following,
	
	\begin{theorem}\label{theorem1}
		Let $s\geqq 1$, $Z\neq 0$, and $\bold u_0 \in H^s(\mathcal{G})\cap  \mathcal N_{0, Z}(s)$. Then there exists $T=T(\|\bold u_0\|_s)$ and a unique solution $\bold u$ of the IVP \eqref{kdv3}-\eqref{Zcondition}  in the class $X_s(T)$ such that $\bold u(0)=\bold u_0$.  Furthermore, for any $T_0\in (0, T)$ there exists a neighborhood $W_0\subset  H^s(\mathcal{G})\cap  \mathcal N_{0, Z}(s)$ of $\bold u_0$ such that the  data-to-solution  map
		$$
		\bold w_0 \in W_0 \mapsto    \bold w\in X_s(T_0)
		$$
		is  Lipschitz.
	\end{theorem}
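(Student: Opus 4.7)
The plan is to recast the IVP \eqref{kdv3}--\eqref{Zcondition} as a coupled system of boundary-forced integral equations on the $2n$ half-lines forming $\mathcal G$ and solve it by contraction in $X_s(T)$, following the Colliander--Kenig--Holmer--Faminskii strategy for KdV on the half-line, adapted to the balanced star. On each edge $\mathbf e$, I first extend the initial datum $u_{0,\mathbf e}$ to some $\widetilde u_{0,\mathbf e}\in H^s(\mathbb R)$ and split $u_{\mathbf e}= v_{\mathbf e}\big|_{\mathbf e} + \Phi_{\mathbf e}$, where $v_{\mathbf e}$ solves the full-line Airy-type equation on $\mathbb R$ with datum $\widetilde u_{0,\mathbf e}$ and forcing given by an extension of $\partial_x(u_{\mathbf e}^2)$, and $\Phi_{\mathbf e}$ solves the homogeneous linear equation on $\mathbf e$ with zero initial data and inhomogeneous traces at $x=0$. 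The latter is written via the Faminskii boundary-forcing operators on the half-line. The three $\delta$-type coupling conditions in \eqref{Zcondition} then become a block-$3\times 3$ linear system in the mixed-regularity space $H^{(s+1)/3}(0,T)\times H^{s/3}(0,T)\times H^{(s-1)/3}(0,T)$ for the unknown trace vectors $\mathbf u(0\pm)$, $\mathbf u'(0\pm)$, $\mathbf u''(0\pm)$; since $Z\neq 0$, the coupling matrix is nondegenerate and the system can be inverted for the unknown traces.

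Once the traces have been eliminated, the IVP is equivalent to a fixed-point equation of the schematic form
\begin{equation*}
\mathbf u = \Lambda(\mathbf u) := \mathcal L[\mathbf u_0] + \mathcal B[\partial_x(\mathbf u^2)],
\end{equation*}
which I will close in $X_s(T)$ via two families of estimates. The linear terms are controlled by Faminskii-type estimates for the Airy group on $\mathbb R$ (smoothing, maximal function, and sharp Kato trace inequalities) together with the continuity of the boundary-forcing operators into $\mathcal X^{s,b,\beta,\sigma}$ and into the trace spaces appearing in the definition of $X_s(T)$, all valid in the parameter range singled out in the remark preceding the theorem. The nonlinear term is handled by the bilinear Bourgain-type estimate
\begin{equation*}
\|\partial_x(uv)\|_{\mathcal X^{s,b-1,\beta,\sigma}} \lesssim \|u\|_{\mathcal X^{s,b,\beta,\sigma}}\,\|v\|_{\mathcal X^{s,b,\beta,\sigma}} \qquad (s\geqq 1),
\end{equation*}
which transfers edge by edge from its classical full-line counterpart through the extension/restriction procedure. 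A standard small-time smallness argument gives the contraction on a ball of radius comparable to $\|\mathbf u_0\|_{H^s}$ in $X_s(T)$ for $T=T(\|\mathbf u_0\|_s)$, and a difference estimate for two data $\mathbf u_0, \mathbf w_0$ yields the Lipschitz data-to-solution map on a neighborhood $W_0$.

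The principal obstacle is the boundary-coupling step. Unlike a single half-line, the three $\delta$-conditions at $\nu=0$ entangle traces from all $2n$ half-lines at three different orders of regularity simultaneously, and I must show that the composition of the Faminskii boundary operators with the upper-triangular block matrix
\begin{equation*}
\begin{pmatrix} I & 0 & 0 \\ -Z\,I & I & 0 \\ -\tfrac{Z^2}{2}\,I & -Z\,I & I \end{pmatrix}
\end{equation*}
encoding \eqref{Zcondition} is boundedly invertible on $H^{(s+1)/3}(0,T)\times H^{s/3}(0,T)\times H^{(s-1)/3}(0,T)$, uniformly for small $T$. The compatibility set $\mathcal N_{0,Z}(s)$ in \eqref{trace0} is precisely designed so that the right-hand sides of this trace system have the correct Sobolev regularity at $t=0$ for each regime of $s$, which is exactly what is needed for the inversion to land in the correct functional space and for the contraction to close.
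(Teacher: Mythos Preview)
Your overall architecture---extend each component to $\mathbb R$, write the solution as a free-plus-Duhamel piece plus a Faminskii boundary-forcing correction, then close by contraction in $X_s(T)$---matches the paper's. But the boundary-coupling step, which you correctly flag as the main obstacle, is not set up correctly, and as written the argument has a genuine gap.

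The upper-triangular block matrix you display encodes only the algebraic relations \eqref{Zcondition} among the six trace vectors; its determinant is identically~$1$ and its invertibility is trivial, so it cannot be the crux. What actually has to be inverted is the map from the unknown boundary-forcing data $(h,f,g)$ to the boundary values produced by the Faminskii potentials $\mathcal R_\beta h$, $\mathcal L_\beta^1 f$, $\mathcal L_\beta^2 g$ \emph{and their $x$-derivatives at $x=0$}. By the trace formulas for these potentials, passing to the temporal Fourier variable $\tau$ this becomes a $3\times 3$ system whose matrix $M(\tau)$ has entries built from the roots $r_0(\tau),r_1(\tau),r_2(\tau)$ of the Airy symbol $z^3+\beta z+i\tau=0$, not a constant matrix in $Z$. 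Two things must then be shown and neither is addressed in your proposal: (i) $\det M(\tau)\neq 0$ for every $\tau\in\mathbb R$, which in the paper reduces to the identity $\det M = \tfrac{Z^2}{2}+p(\tau)^2+q(\tau)^2+2p(\tau)+ik_\beta(\tau)$ and uses the lower bound $p(\tau)\gtrsim |\tau|^{1/3}+|\beta|^{1/2}$ (in particular $p(0)>0$ when $\beta<0$); and (ii) the entries of $M(\tau)^{-1}$ obey the precise symbol bounds $|a_{ij}(\tau)/n_1(\tau)|\lesssim \langle\tau\rangle^{-\ell_{ij}/3}$ needed so that the recovered $(h,f,g)$ land in $H^{(s+1)/3}\times H^{(s+1)/3}\times H^{s/3}$. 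Step (ii) is what converts invertibility into the mixed-regularity mapping property you assert; without it the contraction cannot close in $X_s(T)$. Your sentence ``since $Z\neq 0$, the coupling matrix is nondegenerate'' therefore points at the wrong object and the wrong reason.

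A smaller point: the bilinear estimate you quote places $\partial_x(uv)$ in $\mathcal X^{s,b-1,\beta,\sigma}$, but in the Faminskii framework used here (with $b<\tfrac12$) the Duhamel operator is estimated from the companion space $Y^{s,b,\beta,\sigma}$, not from $\mathcal X^{s,b-1,\beta,\sigma}$; you should state the estimate as $\|\psi(t)\partial_x(uv)\|_{Y^{s,b,\beta,\sigma}}\lesssim \|u\|_{X^{s,b,\beta,\sigma}}\|v\|_{X^{s,b,\beta,\sigma}}$ and pair it with the Duhamel bound $\|\psi\mathcal K w\|_{X^{s,b,\beta,\sigma}}\lesssim \|w\|_{Y^{s,b,\beta,\sigma}}$.
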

	
The strategy of the proof of Theorem \ref{theorem1} is consider initially the case of only two edges, i.e. $| \bold E_{-}| = 1$ and $| \bold E_{+}| = 1$, and to use an auxiliary extended problem in the spirit of the paper of the second author \cite{Cava}  (see Section 3 below), where the solution will be obtained in the sense of the distributions and it is the restriction of a convenient extended problem for all line $\R$. It extended problem comes from every edge on the balanced star graph $\mathcal{G}$.  The extension of the problem in each edge on all $\mathbb{R}$ is non trivial since we need to recover the original boundary conditions \eqref{Zcondition}. The use of potentials for the linearized KdV equations on the positive and negative half-lines is fundamental here, since the exact formula of these potential permit us to define an appropriate integral equation that solves the problem on the distribution sense and satisfying \eqref{Zcondition}.

	\begin{remark}
		The natural regularity assumptions for the trace of the functions given by \eqref{Zcondition} are motivated by the	Kato smoothing effects obtained by Kenig, Ponce and Vega \cite{KPV}. 
	\end{remark}

\begin{remark}	The boundary conditions in \eqref{trace0} are based in a previous authors paper \cite{AC} concern to the instability properties of stationary solutions for KdV equation on balanced graphs with a profile determined by the domain $D(A_Z)$ in \eqref{domain8} (see Section 6 below).
\end{remark}

\begin{remark}
	The approach based on the Riemann-Liouville integrator operator used in the work \cite{Cav1 } does not work for the present context since the boundary condition \eqref{Zcondition} involves interaction between derivatives of different orders, preventing the construction of a integral formula by considering only a inversion of Riemann fractional integration. Thus, the approach considered here viewed as being more general in the sense of applications for more complicated boundary conditions.
\end{remark}

\begin{remark}
The result of Theorem \ref{theorem1} has the bound $s\geq 1$, which is necessary for ensuring the regularity of the second derivative of the trace function  $\partial_x^2 \bold u (0,t)$ belong unless on the space $L^{2}(0,T)$. As a consequence, the boundary condition \eqref{Zcondition} gains a more comprehensible meaning. Since regularity of this trace function is $H^{\frac{s-1}{3}}(0,T)$. It is possible to extend the result to a lower regularity assumption, more precisely, for $s\geq 0$, although the function $\partial_x^2 \bold u(0,t)$ would have a distributional sense.
\end{remark}

Next,  we establish that the mapping data-to-solution is not only Lipschitz but at least of class $C^2$ for $s=1$. This will be sufficient for  obtaining our nonlinear instability results for  specific stationary solutions of the KdV model on balanced graphs.

\begin{theorem}\label{dependence}
		Let $Z\neq 0$, and $\bold u_0 \in H^1(\mathcal{G})\cap  \mathcal N_{0, Z}(1)$. Then for  $T=T(\|\bold u_0\|_1)>0$ given by Theorem \ref{theorem1}, there is a neighborhood $V_0\subset  H^1(\mathcal{G})\cap  \mathcal N_{0, Z}(1)$ of $\bold u_0$ such that the  data-to-solution  map associated to problem in \eqref{kdv3}--\eqref{Zcondition}
		$$
		\bold w_0 \in V_0 \mapsto    \bold w\in X_s(T)
		$$
		is  of class $C^2$. 
\end{theorem}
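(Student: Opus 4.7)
The plan is to apply the implicit function theorem in Banach spaces, exploiting the fact that the KdV nonlinearity $2u\partial_x u = \partial_x(u^2)$ is polynomial, indeed bilinear, in $\bold u$. Once the fixed-point construction underlying Theorem \ref{theorem1} is recast in this bilinear language, the $C^2$ conclusion (in fact $C^\infty$) follows essentially for free.

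The first step is to revisit the proof of Theorem \ref{theorem1} at $s=1$ and identify the solution $\bold u\in X_1(T)$ as the unique fixed point of an equation of the form
\begin{equation*}
\bold u \;=\; L(\bold u_0) \;+\; B(\bold u,\bold u),
\end{equation*}
where $L: H^1(\mathcal{G})\cap\mathcal N_{0,Z}(1) \to X_1(T)$ is the continuous linear map generating the inhomogeneous linear evolution (including the auxiliary boundary potentials manufactured from the data in order to enforce \eqref{Zcondition}), and $B: X_1(T)\times X_1(T)\to X_1(T)$ is the continuous bilinear map implementing the Duhamel integral with integrand $\partial_x(uv)$. The continuity of $L$ and the boundedness of $B$ are precisely the linear and bilinear estimates that drive the contraction argument of Theorem \ref{theorem1}.

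Then I would define $F: V_0 \times X_1(T) \to X_1(T)$ by
\begin{equation*}
F(\bold w_0,\bold w) \;=\; \bold w - L(\bold w_0) - B(\bold w,\bold w).
\end{equation*}
Because $L$ is continuous linear and $B$ is continuous bilinear, $F$ is a polynomial of total degree two, hence of class $C^\infty$ as a map between Banach spaces, with partial derivatives
\begin{equation*}
D_{\bold w} F(\bold u_0,\bold u)\bold h = \bold h - 2B(\bold u,\bold h),\qquad D_{\bold w_0}F(\bold u_0,\bold u) = -L.
\end{equation*}
Since the pair $(\bold u_0,\bold u)$ was obtained as the fixed point inside the ball on which the contraction of Theorem \ref{theorem1} runs, one has $\|2B(\bold u,\cdot)\|_{\mathcal L(X_1(T))}<1$, possibly after shrinking $T$ within the interval provided by that theorem (the resulting $T$ still depending only on $\|\bold u_0\|_1$). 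Thus $D_{\bold w}F(\bold u_0,\bold u)=I-2B(\bold u,\cdot)$ is invertible by a Neumann series, and the implicit function theorem produces a $C^\infty$ map $\bold w_0\mapsto \bold w(\bold w_0)\in X_1(T)$ defined on a neighborhood $V_0$ of $\bold u_0$ with $F(\bold w_0,\bold w(\bold w_0))=0$. By the local uniqueness in Theorem \ref{theorem1}, this map coincides with the data-to-solution map, yielding in particular the required $C^2$ regularity.

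The main obstacle is structural rather than analytic: one must verify that the integral scheme used in Section~3 to solve \eqref{kdv3}--\eqref{Zcondition} can genuinely be packaged as a continuous linear term in $\bold u_0$ plus a continuous bilinear term in $\bold u$. Concretely, the Faminskii-type boundary potentials invoked to absorb the $\delta$-type conditions at $\nu=0$ must be checked to depend linearly on the trace data they are fed, so that differentiation in $\bold u_0$ preserves the polynomial structure. Once this bookkeeping is confirmed, no estimates beyond those already established for Theorem \ref{theorem1} are required, and the $C^2$ (and in fact $C^\infty$) dependence is an automatic consequence of the quadratic nature of the KdV nonlinearity.
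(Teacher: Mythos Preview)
Your approach is essentially the same as the paper's: both apply the implicit function theorem to the map $F(\bold w_0,\bold w)=\bold w-L(\bold w_0)-B(\bold w,\bold w)$, observe that $F$ is polynomial (hence smooth) because the KdV nonlinearity is quadratic, and invert $D_{\bold w}F=I-2B(\bold u,\cdot)$ via a Neumann series using the smallness coming from the contraction in Theorem~\ref{theorem1}.

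The one technical point the paper handles more carefully than your sketch is \emph{where} the fixed-point equation lives. You propose to run the argument directly in $X_1(T)$ on the graph, but the bilinear estimate (Lemma~\ref{bilinear}) and the boundary-potential estimates are proved for functions on all of $\mathbb R^2$, in the full Bourgain space $X^{s,b,\beta,\sigma}$. Defining a bounded bilinear $B$ on the restriction space $X_1(T)$ would require a smooth extension operator from $X_1(T)$ back to the ambient space, which is not immediate. The paper sidesteps this by applying the implicit function theorem at the extended level---with data in $H^1(\mathbb R)\times H^1(\mathbb R)$ and solutions in $X_1=X_1^1(\mathbb R^2)\times X_1^1(\mathbb R^2)$---and then writing the data-to-solution map on the graph as the composition $\mathcal R\circ\Phi\circ\mathcal E$, where $\mathcal E$ is a fixed linear (even) extension of the data and $\mathcal R$ is linear restriction; both are $C^\infty$, so the composition is $C^2$. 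Your ``bookkeeping'' paragraph correctly anticipates that the boundary potentials depend linearly on their inputs, but you should also route the argument through the extended spaces as the paper does.
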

 {The proof of Theorem \ref{dependence} is based in the Implicit Function Theorem and the estimates used in the proof of Theorem \ref{theorem1}. In section 5 below, for completeness in the exposition, we give an idea of the proof. We note that as the nonlinearity in the KdV model considered here is $C^\infty$, by using the strategy in \cite{Z} is possible to see that the  data-to-solution  map is also $C^\infty$.}
 \\

In the following we establish our second main result  in this manuscript which is associated to the nonlinear instability of some specific stationary solutions for the KdV model. We recall that  solutions of stationary type for the KdV  on a balanced graph $\mathcal G$ are determined in the form
$$
(u_{\bold e}(x,t))_{\bold e\in \bold E}=(\phi_{\bold e}(x))_{\bold e\in \bold E},\qquad\text{for all}\;\; t,
$$
where for $e \in \mathbf{E}_{-}$ the profile $\phi_{\mathbf{e}}:(-\infty, 0) \rightarrow \mathbb{R}$ satisfies $\phi_{\mathbf{e}}(-\infty)=0$, and for $\mathbf{e} \in \mathbf{E}_{+}, \phi_{\mathbf{e}}$ : $(0,+\infty) \rightarrow \mathbb{R}$ satisfies $\phi_{\mathbf{e}}(+\infty)=0$. Thus, from the the condition $\phi_{\mathbf{e}}(\pm\infty)=0$ we have that every component $\phi_{\bold e}$ satisfies {\it a priori} the following nonlinear elliptic equation
$$
\alpha_{\mathbf{e}} \frac{\mathrm{d}^2}{\mathrm{~d} x^2} \phi_{\mathbf{e}}(x)+\beta_{\mathbf{e}} \phi_{\mathbf{e}}(x)+\phi_{\mathbf{e}}^2(x)=0, \quad\quad \text{for all}\;  \mathbf{e} \in \mathbf{E}.
$$
For $\alpha_{\mathbf{e}}>0$ and $\beta_{\mathbf{e}}<0$, and for each $\mathbf{e} \in \mathbf{E}$, we can  obtain several families of profiles based on the classical soliton of  the KdV on  the full line,
\begin{equation}\label{soliton1}
	\phi_{\mathbf{e}}(x)=c\left(\alpha_{\mathbf{e}}, \beta_{\mathbf{e}}\right) \operatorname{sech}^2\left(d\left(\alpha_{\mathbf{e}}, \beta_{\mathbf{e}}\right) x+p_{\mathbf{e}}\right), \quad \mathbf{e} \in \mathbf{E},
\end{equation}
where the specific values of the shift $p_{\mathrm{e}}$ will depend on which other conditions are given for the profile $\phi_{\mathrm{e}}$ on the vertex of $\mathcal{G}$, $\nu=0$. In  \cite{AC} was studied the case of the stationary profile $(\phi_{\bold e})_{\bold e\in \bold E}$ belongs to the domain of the  family of   skew-self-adjoint extension $(A_Z, D(A_Z ))$  defined in \eqref{domain8} and  with the constants sequences  $(\alpha_ \bold e)_{\bold e\in \bold E}=(\alpha_{+})$ and  $(\beta_ \bold e)_{\bold e\in \bold E}=(\beta_{+}) $, with  $\alpha_{+}>0$ and $\beta_{+}<0$.   Thus, for $Z\neq 0$, $\omega\equiv -\frac{\beta_+}{\alpha_+}>\frac{Z^2}{4}$ and the half-soliton profiles $\phi_{\pm}$ defined by (see \eqref{solitary})
\begin{equation}\label{soli6}
	\phi_{+}(x)=-\frac{3\beta_{+}}{2} sech^2\Big(\frac{\sqrt{\omega}}{2}\; x-tanh^{-1}\Big(\frac{Z}{2\sqrt{\omega}}\Big)\Big),\;\;x>0
\end{equation}
and $\phi_{-}(x)\equiv \phi_{+}(-x)$ for $x<0$, we obtain for the  constants sequences of functions  
\begin{equation}\label{u+}
	u_{-}=(\phi_{-})_{\bold e\in \bold E_{-}},\; u_{+}=(\phi_{+})_{\bold e\in \bold E_{+}},
\end{equation}
that  $U_{Z}=(u_{-}, u_{+})$ represents one family of stationary profiles  for the KdV model on  a balanced graph and satisfying   $U_{Z}\in D(A_Z )$. For $Z>0$, $U_{Z}$ represents one family of stationary bump profiles (Figure 3)  and  for $Z<0$, $U_{Z}$ represents a family of stationary tail profiles (Figure 4).
\begin{figure}[h]
	\centering
	\begin{minipage}[b]{0.45\linewidth}\label{bump}
		\includegraphics[angle=0,scale=0.6]{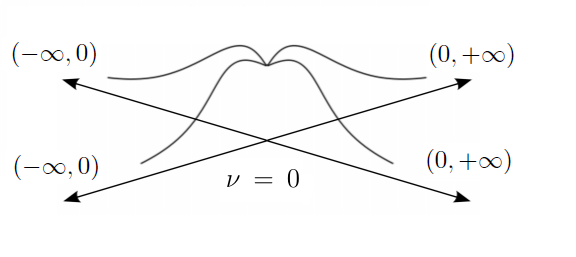}
		\caption{$U_{Z}$, $Z>0$, bump profiles}
	\end{minipage}
	%\begin{minipage}[b]{0.45\linewidth}\label{tail}
		%\includegraphics[angle=0,scale=0.5]{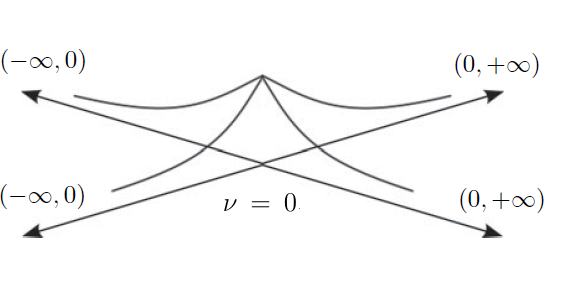}
		%\caption{$U_{Z}$, $Z<0$, tail profiles}
	%\end{minipage}
\end{figure}

\begin{figure}[h]
	\centering
	%\begin{minipage}[b]{0.45\linewidth}\label{bump}
		%\includegraphics[angle=0,scale=0.5]{bump.png}
		%\caption{$U_{Z}$, $Z>0$, bump profiles}
	%\end{minipage}
	\begin{minipage}[b]{0.45\linewidth}\label{tail}
		\includegraphics[angle=0,scale=0.6]{}
		\caption{$U_{Z}$, $Z<0$, tail profiles}
	\end{minipage}
\end{figure}

Now, in  \cite{AC} was also established a criterion for the linear instability of stationary solutions for the KdV on arbitrary metric star graphs. This criterion was then  applied to the case of the profiles of type tail and bump $U_Z$ and so established that they are linearly unstable stationary solutions for the KdV model on balanced graphs (see Theorem 6.2 in \cite{AC}).

In the following we establish that the  linear instability result in Theorem \eqref{ins} of  \cite{AC} becomes a  nonlinear instability behavior. Thus we have the following definition.

\begin{definition}\label{nonstab}
	A stationary solution $\Phi =(\phi_{\bold e})_{\bold e\in \bold E} \in D(A_Z)$    is said to be \textit{nonlinearly unstable} in $X\equiv H^1(\mathcal G)$-norm by the flow of   the  Korteweg-de Vries model on $\mathcal G$ if there is $\epsilon>0$, such that for every $\delta>0$ there are a initial data $\bold u_0$ with $\|\Phi -\bold u_0\|_X<\delta$ and an instant $t_0=t_0(\bold u_0)$ such that $\|\bold u(t_0)-\Phi \|_X>\epsilon$, where $\bold u=\bold u(t)$ is the solution of the  Korteweg-de Vries problem in  \eqref{kdv3}   with initial data $\bold u(0)=\bold u_0$.
\end{definition}

Our nonlinear instability result is the following,

\begin{theorem}\label{Nins}  Let $Z\neq 0$,   $\alpha_{+}>0$,  $\beta_{+}<0$, and $-\frac{\beta_{+}}{\alpha_+}>\frac{Z^2}{4}$. Then for the profiles $\phi_{\pm}$  in \eqref{soli6} we have that  the family of tails and bumps profiles  $U_{Z}= (\phi_{\bold e})_{\bold e\in \bold E}$ with $\phi_{\bold e}= \phi_{-}$ for $\bold e\in \bold{E}_{-}$ and   $\phi_{\bold e}= \phi_{+}$ for $\bold e\in \bold{E}_{+}$, are nonlinearly unstable  for the  flow associated to the Korteweg-de Vries equation  on a balanced graph.
\end{theorem}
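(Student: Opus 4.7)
The plan is to derive Theorem~\ref{Nins} from the linear instability of $U_Z$ established in Theorem~6.2 of~\cite{AC} via a Henry/Shatah--Strauss contradiction scheme, leveraging the $C^2$ regularity of the data-to-solution map provided by Theorem~\ref{dependence}. Denote by $S(t;\bold u_0):=\bold u(t;\bold u_0)$ the nonlinear flow of~\eqref{kdv3}--\eqref{Zcondition} on $H^1(\mathcal G)\cap \mathcal N_{0,Z}(1)$. Since $U_Z$ is a fixed point of $S(t;\cdot)$, the chain rule identifies $DS(t;U_Z)=e^{tL}$, where $L$ is the linearization of the KdV right-hand side at $U_Z$. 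The linear instability result supplies an eigenpair $(\lambda,\psi)$ of $L$ with $\lambda>0$ and $\psi\in H^1(\mathcal G)\cap \mathcal N_{0,Z}(1)$ (passing to $\re\psi$, $\im\psi$, or to a two-dimensional invariant subspace if $\lambda$ is complex); one may normalize $\|\psi\|_{H^1}=1$, so that $\|e^{tL}\psi\|_{H^1}\gtrsim e^{\lambda t}$.

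Set $T_0:=T(\|U_Z\|_1)$ as in Theorem~\ref{theorem1}. Theorem~\ref{dependence} then provides a neighborhood $V$ of $U_Z$ in $H^1(\mathcal G)\cap \mathcal N_{0,Z}(1)$ on which $S(T_0;\cdot)$ is of class $C^2$, with Taylor expansion
\begin{equation*}
S(T_0;U_Z+h)-U_Z \;=\; e^{T_0 L}h+Q(h),\qquad \|Q(h)\|_{H^1}\leq C_0\|h\|_{H^1}^2,
\end{equation*}
for $U_Z+h\in V$. Suppose for contradiction that $U_Z$ is nonlinearly stable in $H^1$: for a small $\epsilon>0$ with $\overline B(U_Z,\epsilon)\subset V$ there would exist $\delta_0=\delta_0(\epsilon)>0$ such that every solution starting in $B(U_Z,\delta_0)$ remains in $V$ for all $t\geq 0$. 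Under this global confinement, Theorem~\ref{dependence} may be iterated on consecutive intervals $[kT_0,(k+1)T_0]$: the $n$-step flow $S(nT_0;\cdot)$ is $C^2$ on a neighborhood of $U_Z$ with $DS(nT_0;U_Z)=e^{nT_0L}$. A straightforward induction based on $S((n+1)T_0;\cdot)=S(T_0;\cdot)\circ S(nT_0;\cdot)$ combined with the bound on $Q$ yields
\begin{equation*}
\|R_n(h)\|_{H^1}\leq K\,e^{2nT_0\lambda}\,\|h\|_{H^1}^2,\qquad R_n(h):=S(nT_0;U_Z+h)-U_Z-e^{nT_0 L}h.
\end{equation*}

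To close the contradiction, take $h=\delta\psi$ with $\delta\in(0,\delta_0)$ and let $n=n(\delta)$ be the smallest integer such that $\delta e^{nT_0\lambda}\geq 3\epsilon$. Then the linear part $\delta e^{nT_0L}\psi$ has $H^1$-norm at least $3\epsilon$, while the remainder satisfies $\|R_{n}(\delta\psi)\|_{H^1}\leq K\delta^2 e^{2nT_0\lambda}\leq 9K e^{2\lambda T_0}\epsilon^2$; choosing $\epsilon$ small enough that the latter is strictly less than $\epsilon$ yields $\|\bold u(n(\delta)T_0;U_Z+\delta\psi)-U_Z\|_{H^1}\geq 2\epsilon$, contradicting the stability hypothesis. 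The main obstacle I anticipate is the bootstrap step producing the factor $e^{2nT_0\lambda}$ in $R_n$: it demands that the operator norm of $e^{tL}$ on $H^1(\mathcal G)\cap \mathcal N_{0,Z}(1)$ not grow faster than $e^{\lambda t}$ (up to subexponential corrections), so that the quadratic error propagates with rate $e^{2\lambda t}$ commensurate with the unstable rate at the time scale where the linear contribution reaches $\epsilon$. For the KdV linearization at $U_Z$ on the balanced graph, this should follow from the explicit spectral picture developed in~\cite{AC}, or alternatively be circumvented by decomposing the phase space along the spectral projection onto the finite-dimensional unstable eigenspace spanned by $\psi$ and running the bootstrap only on that invariant subspace.
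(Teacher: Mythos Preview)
Your overall scheme coincides with the paper's: combine the unstable eigenvalue of the linearization $NE$ at $U_Z$ (from \cite{AC}) with the $C^2$ regularity of the flow map (Theorem~\ref{dependence}) via a Henry--Perez--Wreszinski mechanism. The difference is that the paper packages the passage from linear to nonlinear instability as a black box (Theorem~\ref{henry1} and Corollary~\ref{corohen}): one only needs to exhibit a point $\mu\in\sigma(S'(U_Z))$ with $|\mu|>1$, and this follows by checking that $S'(U_Z)\Psi_\lambda=e^\lambda\Psi_\lambda$ for the eigenfunction $\Psi_\lambda$ of $NE$. You instead attempt to reprove the Henry mechanism directly through an iterated Taylor expansion and a bootstrap on the remainder.

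The obstacle you anticipate is a genuine gap, and your suggested fixes do not close it. The recursion $R_{n+1}=e^{T_0L}R_n+Q\big(e^{nT_0L}h+R_n\big)$ shows that your bound $\|R_n(h)\|\le K e^{2n\lambda T_0}\|h\|^2$ closes only if $\|e^{T_0L}\|_{\mathrm{op}}<e^{2\lambda T_0}$; this is a statement about the \emph{operator norm} of the linearized semigroup, not merely the existence of one unstable eigenvalue, and nothing in \cite{AC} supplies it. Your proposed remedy of ``running the bootstrap only on the invariant subspace spanned by $\psi$'' cannot work as phrased, since the quadratic remainder $Q$ does not respect any spectral splitting of $L$ and feeds the full trajectory back into every component at each step. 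The correct device---which is precisely what Henry--Perez--Wreszinski prove---is to pass to an equivalent norm adapted to the spectral radius of $e^{T_0L}$, so that in the new norm the operator norm barely exceeds the largest unstable eigenvalue; the paper avoids reproducing this by citing the result directly. A secondary point: you assert $DS(T_0;U_Z)=e^{T_0L}$ ``by the chain rule,'' but identifying the Fr\'echet derivative of the nonlinear flow with the linearized evolution is itself a substantive step; the paper carries it out via Duhamel's formula and the unitary group generated by $(A_Z,D(A_Z))$ in Lemma~\ref{propS}.
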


The strategy of proof for Theorem \ref{Nins} is to use the linear instability result  in Theorem \ref{ins} below and to apply the approach by Henry {\it {et al.}} \cite{HPW82}. The key point of this method is to use the fact that if the mapping data-solution associated to the  Korteweg-de Vries model   is of class $C^2$ on $H^1(\mathcal G)$, then we obtain the nonlinear instability from a result of linear instability (see Section 6 below).

 Lastly, the paper is organized as follows. In the Preliminaries (Section \ref{preliminaries}), we introduce some notations, the functional spaces used in this work, the free group associated with the Airy equation, the Duhamel inhomogeneous operator, and we enunciate some known estimates associated with functional spaces. A review concern potentials for a linearized KdV equation on the positive and negative half-lines is given on Section \ref{review}. Sections \ref{4} and \ref{5} is devoted to the proof of Theorems \ref{theorem1} and \ref{dependence}. Finally, Section \ref{6} is devoted to the proof of Theorem of nonlinear instability.

\section{Preliminaries}\label{preliminaries}

 \subsection{Sobolev spaces on $\mathbb R^{\pm}$ and $\mathcal G$}
 
 Initially, we denote the classical  Sobolev spaces on $\Omega\subset \mathbb R$ with the usual norm. We denote by  $\mathcal{G}$ the star graph constituted by $| \bold E_{-}|+| \bold E_{+}|$ half-lines ( $| \bold E_{-}|$-half-lines of the form  $(-\infty, 0)$ and $| \bold E_{+}|$-half-lines of the form   $(0, +\infty)$) attached to a common vertex $\nu=0$.   
 
 For $s\geq 0$ we say that $\phi \in H^s(\mathbb{R}^+)$ if exists $\tilde{\phi}\in H^s(\mathbb{R})$ such that 
 $\phi=\tilde{\phi}|_{\R+}$.  In this case we set $\|\phi\|_{H^s(\mathbb{R}^+)}:=\inf\limits_{\tilde{\phi}}\|\tilde{\phi}\|_{H^{s}(\mathbb{R})}$. For $s\geq 0$ define $$H_0^s(\mathbb{R}^+)=\Big\{\phi \in H^{s}(\mathbb{R}^+);\,\text{supp} (\phi) \subset[0,+\infty) \Big\}.$$ For $s<0$, define $H^s(\mathbb{R}^+)$ and $H_0^s(\mathbb{R}^+)$  as the dual space of $H_0^{-s}(\mathbb{R}^+)$ and  $H^{-s}(\mathbb{R}^+)$, respectively.

Also, define 
 $$C_0^{\infty}(\mathbb{R}^+)=\Big\{\phi\in C^{\infty}(\mathbb{R});\, \text{supp}(\phi) \subset [0,+\infty)\Big\}$$
 and $C_{0,c}^{\infty}(\mathbb{R}^+)$ as those members of $C_0^{\infty}(\mathbb{R}^+)$ with compact support. We recall that $C_{0,c}^{\infty}(\mathbb{R}^+)$ is dense in $H_0^s(\mathbb{R}^+)$ for all $s\in \mathbb{R}$. A definition for $H^s(\R^-)$ and $H_0^s(\R^-)$ can be given analogous to that for $H^s(\R^+)$ and $H_0^s(\R^+)$.
 
  On the graph $\mathcal G$ we define the classical spaces 
  \begin{equation*}
  L^p(\mathcal{G})=\bigoplus\limits_{\bold e\in  \bold E_{-} }L^p(-\infty, 0)  \oplus \bigoplus\limits_{\bold e\in  \bold E_{+} }L^p(0, +\infty), \quad \,p>1,
  \end{equation*}   
 and  for $s\geq0$
  \begin{equation*}  
 \quad H^s(\mathcal{G})=\bigoplus\limits_{\bold e\in  \bold E_{-} }H^s(-\infty, 0) \oplus \bigoplus\limits_{\bold e\in  \bold E_{+} } H^s(0, +\infty) 
 \end{equation*}   
with the natural norms. We emphasize that in these definitions we are not assuming any condition on the values of the functions at the vertex point $\nu=0$. For $u= (u_ \bold e)_{\bold e\in \bold E}, v= (v_ \bold e)_{\bold e\in \bold E}\in L^2(\mathcal G)$,  we also define
$$
\langle u, v\rangle\equiv \sum _{\bold e\in \bold E_-}\int_{-\infty}^0 u_ \bold e(x)\overline{v_ \bold e}(x)dx +\sum _{\bold e\in \bold E_+}\int_0^{\infty} u_ \bold e(x)\overline{v_ \bold e}(x)dx, 
$$
which turns $L^2(\mathcal G)$ into a  Hilbert space. Depending on the context we will use the following notations for different objects. By $||\cdot||$ we denote  the norm in $L^2(\mathbb{R})$ or in $L^2(\mathcal{G})$. By $||\cdot||_p$ we denote  the norm in $L^p(\mathbb{R})$ or in $L^p(\mathcal{G})$.

 \subsection{Bourgain spaces} Since our approach is based in to replace the original problem in \eqref{kdv3} for an convenient extended problem, in each edge, to all $\R$,  we will need to use appropriate function spaces defined in all $\R$. The principal idea is to construct an operator which is a contraction in adequate functional spaces. Thus, in this section we define and describe the principal properties of these spaces.

Initially, for a function $f$ in the Schwartz space $\mathcal S(\R)$ we denote the Fourier transform and the inverse transform of $f$, respectively by 
$$
\hat{f}(\xi)=\int_{\mathbb{R}} e^{-i \xi x} f(x) d x, \quad \mathcal{F}^{-1}[f](x)=f^{\vee}(x)=\frac{1}{2 \pi} \int_{\mathbb{R}} e^{i \xi x} f(\xi) d \xi.
$$
The classical Sobolev space based in $L^2(\R)$ is given by
$$
H^{s}(\mathbb{R})=\left\{f\in \mathcal S'(\R): \mathcal{F}^{-1}\left[(1+|\xi|)^{s} \hat{f}(\xi)\right] \in L_{2}(\mathbb{R})\right\},
$$
where $\mathcal S'(\R)$ denotes the spaces of tempered distributions.

Now we introduce special modified Bourgain spaces introduced by Faminskii. For $s \geq 0, a \in \mathbb{R}, b \in(0,1 / 2), \sigma \in(1 / 2,2 / 3)$ define the following Bourgain type spaces in the context of Faminskii works \cite{Faminskii} and \cite{Faminskii2}
$$
\begin{aligned} X^{s, b, \beta, \sigma}=\{& f(t, x) \in {S}^{\prime}\left(\mathbb{R}^{2}\right): \\ &\|f\|_{X^{s, b, \beta, \sigma}}=\left.\left(\iint_{\mathbb{R}^{2}}\left(1+|\xi|+|\tau|^{1 / 3}\right)^{2 s} \nu^{2}(\tau, \xi)|\hat{f}(\tau, \xi)|^{2} d \xi d \tau\right)^{1 / 2}<\infty\right\}, \end{aligned}
$$
where the function $\nu$ is given by
\begin{equation}
	\nu(\tau, \xi)=\nu_{\beta, b, \sigma}(\tau, \xi) \equiv\left(1+\left|\tau-\xi^{3}+\beta \xi\right|\right)^{b}+\chi_{[-1,1]}(\xi)(1+|\tau|)^{\sigma},
\end{equation}
\begin{equation}
	\begin{aligned} Y^{s,  b, \beta, \sigma}=\{& f(t, x) \in {S}^{\prime}\left(\mathbb{R}^{2}\right): \\&\quad\quad\|f\|_{Y^{s, b, \beta, \sigma}}=\left.\left(\iint_{\mathbb{R}^{2}}\left(1+|\xi|+|\tau|^{1 / 3}\right)^{2 s} \gamma^{2}(\tau, \xi)|\hat{f}(\tau, \xi)|^{2} d \xi d \tau\right)^{1 / 2}<\infty\right\}, \end{aligned}
\end{equation}
where the function $\gamma$ is defined as 
\begin{equation}
	\gamma(\tau, \xi)=\gamma_{ b,\beta,  \sigma}(\tau, \xi) \equiv \frac{1}{\left(1+\left|\tau-\xi^{3}+\beta \xi\right|\right)^{b}}+\frac{  \chi_{[-1,1]}(\xi)}{(1+|\tau|)^{1-\sigma}}.
\end{equation}

If $\Omega$ is a domain in $\mathbb{R}^{2},$ then define by $X ^{s, b, \beta, \sigma}(\Omega)$ and $Y^{s, b, \beta, \sigma}(\Omega)$ restrictions of $X^{s, b, \beta, \sigma}$
and $Y^{s, b, \beta, \sigma}$ on $\Omega,$ respectively, with natural restriction norms.

On the context of a balanced star graph $\mathcal G$ we consider de Bourgain   spaces $\bold {\mathcal  X}^{s,b,\beta,\sigma}(\mathcal G \times (0,T))$   by
	\begin{equation}\label{Bour}
		\bold {\mathcal X}^{s,b,\beta,\sigma}(\mathcal G \times (0,T))=\bigoplus\limits_{\bold e\in  \bold E_{-} }X^{s,b,\beta,\sigma}((-\infty,0)\times (0,T)) \oplus \bigoplus\limits_{\bold e\in  \bold E_{+} }X^{s,b,\beta,\sigma}((0,\infty)\times(0,T)).
\end{equation}

\subsection{Linear Group}  For convenience in the notation, we will consider the following linearized KdV equation,
\begin{equation}\label{lkdv}
\partial_tu+\partial_x^3u+\beta \partial_xu=0, \quad x\in \mathbb R
\end{equation}
for $\beta\in \mathbb R$. Then, the  linear group $S_{\beta}(t):=e^{-t(\partial_x^3+\beta\partial_x)}:S'(\R)\rightarrow S'(\R)$ associated with equation in \eqref{lkdv} is defined by
$$
e^{-t\left(\partial_x^3+\beta \partial_x\right)} \phi(x)=\left(e^{i t\left(\xi^3-\beta \xi\right)} \hat{\phi}(\xi)\right)^{\vee}(x),
$$
and will satisfy
$$
\left\{\begin{array}{lll}
	\left(\partial_t+\partial_x^3+\beta \partial_x\right) S_\beta(t) \phi(x)=0 & \text { for } & (x, t) \in \mathbb{R} \times \mathbb{R}, \\
	S_\beta(0) \phi(x)=\phi(x) & \text { for } & x \in \mathbb{R} .
\end{array}\right.
$$

In the rest of the paper, the function $\psi:\mathbb R\rightarrow \R$ will denote a cutt off regular function supported in the set $[-2,2]$ such that $\psi\equiv 1$ on the set $[-1,1]$. Also we denote $\psi_T(t)=\frac{1}{T}\psi(\frac{t}{T})$.

The next estimates were proven in \cite{Faminskii}.

\begin{lemma}\label{group} Let $s \geq 0$ and $0<b<1$. If $\phi \in H^s(\mathbb{R})$, then hold following estimates
	\begin{equation}
		\begin{split} 
			&\left\|\psi(t) S_\beta(t) \phi(x)\right\|_{C\left(\mathbb{R}_t ; H^s\left(\mathbb{R}_x\right)\right)}+\sum_{j=0}^2\left\|\psi(t) \partial_x^j S_\beta(t) \phi(x)\right\|_{C\left(\mathbb{R}_x ; H^{(s+1-j)/3}\left(\mathbb{R}_t\right)\right)}\\
			&\quad \quad+\left\|\psi(t) S_\beta(t) \phi(x)\right\|_{X^{s, b, \beta, \sigma}} \lesssim\|\phi\|_{H^s(\mathbb{R})}.
		\end{split}
	\end{equation}
\end{lemma}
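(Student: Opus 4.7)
The plan is to work directly from the Fourier representation
$$S_\beta(t)\phi(x)=\frac{1}{2\pi}\int_{\mathbb R}e^{ix\xi+it\omega(\xi)}\hat\phi(\xi)\,d\xi,\qquad \omega(\xi)=\xi^3-\beta\xi,$$
and treat the three contributions to the norm separately. The $C(\mathbb R_t;H^s_x)$ bound is immediate: Plancherel in $x$ gives $\|S_\beta(t)\phi\|_{H^s_x}=\|\phi\|_{H^s}$ for every $t$, continuity in $t$ follows from dominated convergence applied to the defining oscillatory integral, and multiplication by the smooth compactly supported cutoff $\psi$ only shrinks the norm.

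For the sharp trace estimates I would follow the Kenig--Ponce--Vega / Faminskii scheme. Fix $j\in\{0,1,2\}$ and split the $\xi$-integral into a compact patch $|\xi|\le R$, with $R>\sqrt{|\beta|/3}+1$, and the high-frequency complement. On the compact patch, $|\xi|^j$ is bounded and the norm $\|\psi(t)e^{it\omega(\xi)}\|_{H^r_t}$ is uniformly bounded in $\xi$, so the trace estimate follows from Minkowski's inequality in $\xi$ together with Cauchy--Schwarz. On the high-frequency piece, $\omega$ is strictly monotone on each of $\xi>R$ and $\xi<-R$, so the change of variable $\tau=\omega(\xi)$ is admissible with Jacobian $|d\tau/d\xi|=|3\xi^2-\beta|\simeq|\xi|^2$. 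Writing the trace at fixed $x$ as an inverse Fourier transform in $\tau$, and using that multiplication by $\psi$ in $t$ corresponds to convolution by $\hat\psi$ in $\tau$, Plancherel in $t$ yields
$$\|\psi(t)\partial_x^jS_\beta(t)\phi(x,\cdot)\|_{H^{(s+1-j)/3}_t}^2\lesssim \int_{|\xi|\ge R}|\xi|^{2j}(1+|\xi|)^{2(s+1-j)}|\hat\phi(\xi)|^2\frac{d\xi}{|3\xi^2-\beta|},$$
since $(1+|\omega(\xi)|)^{2(s+1-j)/3}\simeq(1+|\xi|)^{2(s+1-j)}$ on the high-frequency range. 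The ratio $|\xi|^{2j}/|3\xi^2-\beta|\lesssim(1+|\xi|)^{2j-2}$ then combines with $(1+|\xi|)^{2(s+1-j)}$ to deliver exactly $(1+|\xi|)^{2s}$, and uniformity in $x$ is automatic since only $|e^{ix\xi}|=1$ enters.

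For the Bourgain estimate, the space-time Fourier transform of $\psi(t)S_\beta(t)\phi$ equals $\hat\psi(\tau-\omega(\xi))\hat\phi(\xi)$. The contribution of the weight $(1+|\tau-\omega(\xi)|)^b$ is handled by Plancherel in $\tau$: since $b<1/2$ and $\hat\psi\in\mathcal S$, the integral $\int(1+|\lambda|)^{2b}|\hat\psi(\lambda)|^2\,d\lambda$ is finite. The low-frequency piece $\chi_{[-1,1]}(\xi)(1+|\tau|)^\sigma$ is treated by the same shift: $|\xi|\le 1$ forces $|\omega(\xi)|\le 1+|\beta|$, so $1+|\tau|\lesssim 1+|\tau-\omega(\xi)|$, and Schwartz decay of $\hat\psi$ again closes the estimate because $\sigma<2/3$. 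Finally, on the effective support of $\hat\psi(\tau-\omega(\xi))$ one has $|\tau|\lesssim 1+|\xi|^3$, so the outer weight $(1+|\xi|+|\tau|^{1/3})^{2s}$ is absorbed by $(1+|\xi|)^{2s}$ and one recovers $\|\phi\|_{H^s}$.

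The main technical point is to make the change of variables rigorous near the critical points $\xi=\pm\sqrt{\beta/3}$ (when $\beta>0$), where $\omega'$ vanishes; isolating this compact region inside the low-frequency patch and treating it by Minkowski/Cauchy--Schwarz rather than by substitution is the cleanest way to circumvent the degeneracy, and it is the only place where the structure of $\omega$ (as opposed to pure $\xi^3$) really enters the argument.
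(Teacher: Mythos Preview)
The paper does not actually prove this lemma: it is stated with the preamble ``The next estimates were proven in \cite{Faminskii}'' and no argument is given. Your sketch is therefore not being compared against a proof in the paper but against the cited source, and what you have written is essentially the standard Kenig--Ponce--Vega/Faminskii argument that underlies the reference. The approach is sound.

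Two minor imprecisions are worth tightening. First, your appeals to ``$b<1/2$'' and ``$\sigma<2/3$'' in the Bourgain estimate are unnecessary: since $\hat\psi\in\mathcal S$, the integral $\int(1+|\lambda|)^{2r}|\hat\psi(\lambda)|^2\,d\lambda$ is finite for every $r\in\mathbb R$, so any $b\in(0,1)$ and any $\sigma$ are handled identically. Second, the phrase ``on the effective support of $\hat\psi(\tau-\omega(\xi))$'' is informal; the clean way to make it rigorous is the pointwise bound
\[
1+|\xi|+|\tau|^{1/3}\;\lesssim\;(1+|\xi|)+|\tau-\omega(\xi)|^{1/3},
\]
which follows from $|\omega(\xi)|^{1/3}\lesssim 1+|\xi|$, so that $(1+|\xi|+|\tau|^{1/3})^{2s}\lesssim(1+|\xi|)^{2s}+|\tau-\omega(\xi)|^{2s/3}$ and the second term is again absorbed by the Schwartz decay of $\hat\psi$. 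With these adjustments your argument is complete.
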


\subsection{Duhamel  operator} The known inhomogeneous solution operator $\mathcal{K}$ associated with the KdV equation, for any $w \in L_{\mathrm{loc}}^1\left(\mathbb{R}, S^{\prime}(\mathbb{R})\right)$ is given by
\begin{equation}\label{Duhamel}
\mathcal{K} w(x, t)=\int_0^t e^{-\left(t-t^{\prime}\right)\left(\partial_x^3+\beta \partial_x\right)} w\left(x, t^{\prime}\right) \mathrm{d} t^{\prime},
\end{equation}
which satisfies in the sense of distributions
$$
\left\{\begin{array}{lll}
	\left(\partial_t+\partial_x^3+\beta \partial_x\right) \mathcal{K} w(x, t)=w(x, t) & \text { for } & (x, t) \in \mathbb{R} \times \mathbb{R}; \\
\mathcal	K w(x, 0)=0 & \text { for } & x \in \mathbb{R} .
\end{array}\right.
$$
Now, we summarize some useful estimates for the Duhamel inhomogeneous solution operators $\mathcal{K}$ that will be used later in the proof of the main results and its proof can be seen in \cite{Faminskii}.
\begin{lemma}\label{duhamel} For all $s \in \mathbb{R}$, we have the following estimate
	$$
	\begin{aligned}
		\|\psi(t) \mathcal{K} w(x, t)&\|_{C\left(\mathbb{R}_t ; H^s\left(\mathbb{R}_x\right)\right)}+\sum_{j=0}^2\left\|\psi(t) \partial_x^j \mathcal{K} w(x, t)\right\|_{C\left(\mathbb{R}_x ; H^{(s+1) / 3}\left(\mathbb{R}_t\right)\right)} \\
		&\quad+\|\psi(t) \mathcal{K} w(x, t)\|_{X^{s, b, \beta, \sigma}} \lesssim\|w\|_{Y{s, b, \beta, \sigma}} .
	\end{aligned}
	$$
\end{lemma}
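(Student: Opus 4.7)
The plan is to reduce the estimate for $\mathcal{K}$ to the group estimate of Lemma~\ref{group} plus a direct Fourier analysis of the ``non-homogeneous remainder'', by exploiting the standard resonant/non-resonant splitting in Bourgain space.  Set $P(\xi) = \xi^3-\beta\xi$ so that $\widehat{S_\beta(t)\phi}(\xi)=e^{itP(\xi)}\hat\phi(\xi)$.  Writing $w(x,t)=(2\pi)^{-2}\iint e^{i(\xi x+\tau t)}\tilde w(\xi,\tau)\,d\xi d\tau$ and computing the $t'$-integral one obtains the clean Fourier representation
\begin{equation*}
\widehat{\mathcal{K}w}(\xi,t)=\frac{1}{2\pi i}\int_{\mathbb R}\frac{e^{i\tau t}-e^{itP(\xi)}}{\tau-P(\xi)}\,\tilde w(\xi,\tau)\,d\tau .
\end{equation*}
This is the expression on which I will operate throughout.

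Next I would pick a smooth cutoff $\chi$ with $\chi\equiv 1$ on $[-1,1]$ and $\chi\equiv 0$ outside $[-2,2]$, and decompose $\mathcal{K}w=\mathcal{K}^{\mathrm{res}}w+\mathcal{K}^{\mathrm{nr}}w$ according to whether $\chi(\tau-P(\xi))$ or $1-\chi(\tau-P(\xi))$ appears.  On the non-resonant piece $\mathcal{K}^{\mathrm{nr}}w$ the two exponentials can be separated, and the $e^{itP(\xi)}$ term is precisely a homogeneous evolution $S_\beta(t)W$ whose data $W$ satisfies
\begin{equation*}
\widehat W(\xi)=\frac{1}{2\pi i}\int\frac{1-\chi(\tau-P(\xi))}{\tau-P(\xi)}\,\tilde w(\xi,\tau)\,d\tau ,
\end{equation*}
so that by Cauchy--Schwarz against the weight $\gamma(\tau,\xi)^{-1}$ one has $\|W\|_{H^s(\mathbb R)}\lesssim\|w\|_{Y^{s,b,\beta,\sigma}}$ (here $b<1/2$ is why the integral converges; the extra $\sigma$ term in $\gamma$ absorbs the low-frequency defect $|\xi|\le 1$, where $P$ has critical points).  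Lemma~\ref{group} then delivers all three norms for this piece.  The remaining $e^{i\tau t}/(\tau-P(\xi))$ part and the resonant piece $\mathcal{K}^{\mathrm{res}}w$ (on which one Taylor-expands $e^{i\tau t}-e^{itP(\xi)}=e^{itP(\xi)}(e^{i(\tau-P(\xi))t}-1)$, removing the apparent singularity) are handled by multiplication by $\psi(t)$ and a direct frequency computation: the $X^{s,b,\beta,\sigma}$ bound follows from the definitions once the $1/(\tau-P(\xi))^{1-b}$ factor is combined with the cutoff, while the $C(\mathbb R_t;H^s_x)$ and trace bounds $C(\mathbb R_x;H^{(s+1-j)/3}_t)$ come from $L^1_\tau$-bounds on $\psi$'s Fourier transform together with Plancherel in $\xi$ and the dispersion identity $\tau=P(\xi)+O(1)$ (which converts powers of $\tau$ into powers of $\xi$ up to the correction controlled by the $\chi_{[-1,1]}$ piece of $\gamma$).

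The step I expect to be the main obstacle is the trace estimate $\|\psi(t)\partial_x^j\mathcal{K}w\|_{C(\mathbb R_x;H^{(s+1-j)/3}_t)}$ on the low-frequency zone $|\xi|\le 1$.  In that region the phase $\tau-P(\xi)$ nearly degenerates along $\tau\approx 0$, so the usual $X^{s,b}$ weight with $b<1/2$ is not strong enough to yield $H^{(s+1)/3}_t$ regularity in $t$ of the boundary trace.  This is exactly the reason Faminskii introduced the extra weight $(1+|\tau|)^{\sigma}$ with $\sigma\in(1/2,2/3)$ in $\nu$ (and the dual $(1+|\tau|)^{1-\sigma}$ in $\gamma$): testing the trace in $H^{(s+1)/3}_t$ against the low-$\xi$ contribution and changing variables produces a time integral over an interval of length $O(1)$ of $|\tau|^{-(1-\sigma)}$, which is integrable precisely because $\sigma>1/2$, while the gain of three $t$-derivatives against one $x$-derivative (the scaling reason for the factor $1/3$) comes from the branch $\tau=P(\xi)$ for $|\xi|\ge 1$.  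Balancing these two contributions is the delicate point; once that is carried out, summing the estimates for $\mathcal{K}^{\mathrm{res}}$ and $\mathcal{K}^{\mathrm{nr}}$ yields the lemma.
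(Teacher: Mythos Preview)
The paper does not supply its own proof of this lemma: it simply states the estimate and refers the reader to Faminskii \cite{Faminskii}, so there is no in-paper argument to compare against.  Your outline is, in fact, the standard proof one finds in that reference---the resonant/non-resonant splitting of the Duhamel symbol $(e^{i\tau t}-e^{itP(\xi)})/(\tau-P(\xi))$, reduction of the homogeneous piece to Lemma~\ref{group} via the data $W$, and a direct frequency computation for the remainder in which the extra weight $(1+|\tau|)^{\sigma}$ with $\sigma>1/2$ compensates for the low-frequency degeneracy of $P(\xi)=\xi^3-\beta\xi$.  So your approach is correct and faithful to the cited source.

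Two small remarks.  First, the printed statement carries a typo: the trace exponent should be $(s+1-j)/3$, not $(s+1)/3$, exactly as you write and as in Lemma~\ref{group}.  Second, in your Cauchy--Schwarz step $\|W\|_{H^s}\lesssim\|w\|_{Y^{s,b,\beta,\sigma}}$, note that the $Y$-norm carries the weight $(1+|\xi|+|\tau|^{1/3})^{s}$ rather than $(1+|\xi|)^{s}$; a short additional splitting (into $|\tau|\lesssim (1+|\xi|)^3$ versus $|\tau|\gg (1+|\xi|)^3$, the second region being harmless because $|\tau-P(\xi)|\sim|\tau|$ there) is needed to match the weights.  This is routine and is handled in \cite{Faminskii}.
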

The following lemma state a estimate concern the $X^{s,b,\beta,\sigma}$ spaces.

\begin{lemma}\label{psiT} For $f \in X^{s, b, \beta, \sigma}, T>0$ holds
	$$
	\left\|\psi_T(t) f(t, x)\right\|_{X^{s, b, \beta, \sigma}} \leq c(s, b, \beta, \sigma) T^{1 / 2-\sigma-s / 3}\|f(t, x)\|_{X^{s, b, \beta, \sigma}} .
	$$
\end{lemma}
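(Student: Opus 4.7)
The plan is to reduce the estimate to a pair of simpler Fourier-multiplier inequalities by splitting the weight $\nu$ into its two components. Since $\nu = A + B$ with $A(\tau,\xi)=(1+|\tau-\xi^{3}+\beta\xi|)^{b}$ and $B(\tau,\xi)=\chi_{[-1,1]}(\xi)(1+|\tau|)^{\sigma}$, we have $\nu^{2}\leq 2(A^{2}+B^{2})$, so it suffices to prove the bound separately with $\nu$ replaced by $A$ and by $B$.

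For the $A$-piece, I would first conjugate by the free linear group. Setting $g(t,x)=S_{\beta}(-t)f(t,x)$ turns the weight $(1+|\tau-\xi^{3}+\beta\xi|)^{b}$ into $(1+|\tau|)^{b}$ on the Fourier side, so the $A$-contribution to $\|\psi_{T}f\|$ equals the $H^{b}_{t}H^{s}_{x}$-norm of $\psi_{T}g$. Since multiplication by $\psi_{T}$ acts only in time and therefore commutes with the free group, the problem reduces to the classical Bourgain time-cutoff lemma $\|\psi_{T}g\|_{H^{b}_{t}}\leq C\,T^{1/2-b}\|g\|_{H^{b}_{t}}$ for $b<1/2$. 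This is proved by writing $\widehat{\psi_{T}}(\tau)=\hat\psi(T\tau)$ and performing a dyadic decomposition in $\tau$, exploiting the rapid decay of $\hat\psi$; crucially, under the hypothesis $b<1/2$ the resulting $T$-exponent $1/2-b$ dominates the claimed exponent $1/2-\sigma-s/3$.

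For the $B$-piece, the spatial frequency is confined to $[-1,1]$, so the spatial weight $(1+|\xi|+|\tau|^{1/3})^{s}$ is essentially $(1+|\tau|)^{s/3}$, and the matter reduces to bounding the $L^{2}_{\tau}$-norm of $(1+|\tau|)^{\sigma+s/3}\bigl(\hat\psi_{T}\ast_{\tau}\hat f(\cdot,\xi)\bigr)$ uniformly for $\xi\in[-1,1]$. Using Plancherel in $\tau$ together with the Schwartz decay of $\hat\psi$ and the scaling $\widehat{\psi_{T}}(\tau)=\hat\psi(T\tau)$, one quantifies how the weight $(1+|\tau|)^{\sigma+s/3}$ balances against the oscillation scale $T^{-1}$: splitting the convolution into the regions $|\tau|\lesssim T^{-1}$ and $|\tau|\gtrsim T^{-1}$ and applying Young's / Cauchy-Schwarz inequalities yields precisely the factor $T^{1/2-\sigma-s/3}$ times the $B$-weighted norm of $f$.

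The main obstacle is the $B$-piece, where the $\sigma+s/3$ loss must be matched exactly against the $T^{1/2}$ gain coming from the $L^{2}$-normalization of $\psi_{T}$; keeping the bookkeeping sharp requires the restriction $\sigma>1/2$ and a careful dyadic split relative to the threshold $|\tau|\sim T^{-1}$. Once both pieces are controlled, the stated estimate follows by the triangle inequality $\nu\leq A+B$. A self-contained execution of this decomposition is given in Faminskii \cite{Faminskii,Faminskii2}, from which the lemma may in fact be quoted directly.
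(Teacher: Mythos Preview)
The paper does not give its own proof of this lemma: it is stated among a block of estimates (Lemmas~\ref{group}--\ref{bilinear}) that are simply attributed to Faminskii \cite{Faminskii,Faminskii2}. Your proposal ends in exactly the same place---quoting the result from Faminskii---so in that sense the two agree. The additional sketch you provide (splitting $\nu=A+B$, conjugating the $A$-piece by the free group, and handling the $B$-piece via the frequency restriction $|\xi|\leq 1$) is the standard route and is essentially the argument in those references. One small imprecision: the $A$-piece does not reduce cleanly to an $H^{b}_{t}H^{s}_{x}$ norm because the spatial weight $(1+|\xi|+|\tau|^{1/3})^{s}$ itself depends on $\tau$, so the conjugation by $S_{\beta}(-t)$ does not quite leave a product weight; one has to further split according to whether $|\tau|\lesssim|\xi|^{3}$ or not. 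This is routine and is carried out in the papers you cite, so your final sentence covers it.
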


\begin{remark}\label{power}
In our context, we will work with $\sigma > \frac{1}{2}$. Therefore, the exponent $r := \frac{1}{2} - \sigma - \frac{s}{3}$ can become negative depending on the value of $s$. In the case where this exponent is negative, we can modify the time to $\tilde{T} = T^{-\theta}$ for any $\theta > 0$. Thus, in Lemma \eqref{psiT}, we obtain the new term $\tilde{T}^{\frac{1}{2} - \sigma - \frac{s}{3}} = T^{-\theta\left(\frac{1}{2} - \sigma - \frac{s}{3}\right)}$. We use this argument occasionally, as in the fixed-point argument, we require a positive power of time.
\end{remark}

Next, nonlinear estimates, in the context of the $\mathrm{KdV}$ equation equation, for $b<\frac{1}{2}$, were derived by Faminskii in \cite{Faminskii}.

\begin{lemma}\label{bilinear}Let $b \in[7 / 16,1 / 2), \quad \sigma \in(1 / 2,2 / 3), \quad s \geq 0$, we have
	$$
	\left\|\psi(t) \partial_x\left(v_1 v_2\right)\right\|_{Y^{s, b, \beta, \sigma}} \lesssim\left\|v_1\right\|_{X^{s, b, \beta, \sigma}}\left\|v_2\right\|_{X^{s, b, \beta, \sigma}}.$$
\end{lemma}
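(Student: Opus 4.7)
The plan is to prove the bilinear estimate via a duality argument on the Fourier side, exploiting the classical KdV resonance identity. First I would dualize: the $Y$-norm is a weighted $L^2$ norm on the Fourier side, so by Plancherel
\[
\|\psi(t)\partial_x(v_1 v_2)\|_{Y^{s,b,\beta,\sigma}} = \sup_{\|g\|_{L^2}=1} \iint \xi\, \widehat{\psi v_1 v_2}(\tau,\xi)\,\overline{\widehat g(\tau,\xi)}\,(1+|\xi|+|\tau|^{1/3})^s\,\gamma(\tau,\xi)\,d\tau\,d\xi.
\]
Expanding $\widehat{v_1 v_2}$ as a convolution reduces the problem to bounding a trilinear integral on the hyperplane $\xi = \xi_1+\xi_2$, $\tau = \tau_1+\tau_2$ against $\|v_1\|_{X^{s,b,\beta,\sigma}}\|v_2\|_{X^{s,b,\beta,\sigma}}$, after absorbing the $\nu$ and $\gamma$ weights into the unknowns.

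The core algebraic input is the KdV resonance identity
\[
(\tau_1 - \xi_1^3 + \beta\xi_1) + (\tau_2 - \xi_2^3 + \beta\xi_2) - (\tau - \xi^3 + \beta\xi) = 3\xi\xi_1\xi_2,
\]
which forces at least one of $|\tau_1-\xi_1^3+\beta\xi_1|$, $|\tau_2-\xi_2^3+\beta\xi_2|$, $|\tau-\xi^3+\beta\xi|$ to be $\gtrsim|\xi\xi_1\xi_2|$. I would decompose into three regions according to which of these three modulations dominates. In each region the weight $(1+|\cdot - \cdot^3 + \beta\cdot|)^b$ with $b<1/2$ supplies enough gain to compensate for the derivative $\partial_x$, after which Cauchy--Schwarz in the convolution variables together with the $L^4$ smoothing/Strichartz bounds for the Airy group (in the spirit of Kenig--Ponce--Vega) closes the argument in the high-frequency regime.

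The genuinely new feature, compared to classical $X^{s,b}$ spaces, is the low-frequency weight $\chi_{[-1,1]}(\xi)(1+|\tau|)^\sigma$ with $\sigma>1/2$. When $\xi$, $\xi_1$, or $\xi_2$ lies in $[-1,1]$, the product $\xi\xi_1\xi_2$ is small and the resonance identity degenerates, so the standard estimate breaks down. Here I would isolate the low-frequency contributions via the cut-off $\chi_{[-1,1]}$ and trade the missing spatial smoothing for the temporal weight $(1+|\tau|)^\sigma$, using $\sigma<2/3$ to keep the resulting $\tau$-integrals convergent and $b \geq 7/16$ to leave enough room at the modulation side.

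The main obstacle is precisely this low-frequency/high-modulation crossover: when $|\xi|\leq 1$ but $|\tau|$ is large, we have $\gamma(\tau,\xi)\sim (1+|\tau|)^{\sigma-1}$, while the inputs may carry factors $\nu(\tau_i,\xi_i)\sim(1+|\tau_i|)^{b}$; verifying that the resonance identity still produces a usable gain at the endpoints $b=7/16$ and $\sigma>1/2$ requires a careful case-by-case numerology. This delicate balancing is exactly what motivated Faminskii to introduce the modified spaces $X^{s,b,\beta,\sigma}$ and $Y^{s,b,\beta,\sigma}$ in \cite{Faminskii}, where the estimate is proved in detail, and at this point I would invoke his result as a black box.
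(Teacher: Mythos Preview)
The paper does not give its own proof of this lemma: it is stated without proof and attributed directly to Faminskii \cite{Faminskii}. Your proposal ultimately does the same thing---after sketching the standard resonance/decomposition strategy you explicitly defer to Faminskii's result as a black box---so your approach agrees with the paper's, and the sketch you provide is in fact more than what the paper itself offers.
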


\section{A review concern potentials for a linearized KdV equation on the positive and negative half-lines.}\label{review}

An important point in this paper is the use of the corresponding linear problems associated with the KdV equation on the positive and negative half-lines. More precisely we 
consider the following IBVPs associated with the linearized $\mathrm{Kd} \mathrm{V}$ on the positive half-line
\begin{equation}\label{kdvr}
	\left\{\begin{array}{l}
		\partial_t v+\partial_x^3 v+\beta \partial_x v=0,\quad x \in(0,+\infty), t \in(0, T); \\
		v(x, 0)=v_0(x), \quad\quad\quad\;\; x \in(0,+\infty); \\
		v(0, t)=f(t), \quad\quad\quad\;\;\;\; t\in(0, T)
	\end{array}\right.
\end{equation}
and the negative half-line
\begin{equation}\label{kdvl}
	\left\{\begin{array}{l}
		\partial_t w+\partial_x^3 w+\beta \partial_x w=0, \quad x \in(-\infty, 0), t \in(0, T);\\
		w(x, 0)=w_0(x),\quad\quad\quad\;\;\; x \in(-\infty, 0);\\
		w(0, t)=g(t), \quad\quad\quad\;\;\;\;\;\; t \in(0, T);\\
		\partial_x w(0, t)=h(t), \quad\quad\quad\;\;\; t \in(0, T).
	\end{array}\right.
\end{equation}

Here, we assume that $v_0\in H^s(\mathbb R^+)$, $w_0\in H^s(\mathbb R^-)$, $f,g\in H^{\frac{s+1}{3}}(0,T)$ and $h\in H^{\frac s 3}(0,T)$.

The presence of one boundary condition on the right half-line problem \eqref{kdvr} versus two boundary conditions on the left half-line problem \eqref{kdvl} can be motivated by integral
identities on smooth solutions to the linear KdV equation, for details the reader can see \cite{Holmer}.

For a complete survey about the problems \eqref{kdvr} and \eqref{kdvl} the reader can see \cite{Cav2}. Also, results of orbital stability and asymptotic stability can be seen in \cite{CM2} and \cite{CM3}.

In this section, we describe explicitly the solutions for the linearized $\mathrm{KdV}$ equation and its properties based in some potentials. For the first time, such potentials were introduced in the paper Cattabriga \cite{Cat}. After, Faminskii \cite{Faminskii} used this potential solutions to solve the nonlinear problems for the KdV on the positive and the negative half-lines. The interesting point in theses papers is that the proof of the global existence of solutions for the KdV equations on the half-lines by assuming more natural conditions for the boundary functions.

In the rest of this section we assume $\beta<0$ in \eqref{kdvr} and \eqref{kdvl}. In order to get the exact formulas for the IBVP \eqref{kdvr} and \eqref{kdvl}, by following the Faminskii works \cite{Faminskii} and \cite{Faminskii2}, we need to study the  following algebraic equation
\begin{equation}\label{algebraic}
	z^3+\beta z+\varepsilon+i \tau=0.
\end{equation}
If $\varepsilon>0$, for any fixed $\tau$, then this equation has one root $z_0$ such that $\operatorname{Re} z_0<0$, and two roots $z_1$ and $z_2$ such that $\operatorname{Re} z_1>0$ and $\operatorname{Re} z_2>0$. Let
$$
r_j(\tau)=r_j(\tau ; \beta)=\lim _{\varepsilon \rightarrow+0} z_j(\varepsilon+i \tau), \quad j=0,1,2 .
$$
The Faminskii papers \cite{Faminskii} and \cite{Faminskii2} gives the following exact formula
\begin{equation}\label{root}
	\begin{aligned}
		&r_0(\tau)=-p(\tau)+i q(\tau), \\
		&r_1(\tau)=p(\tau)+i q(\tau), \\
		&r_2(\tau)=i k_\beta(\tau),
	\end{aligned}
\end{equation}
where $p$ and $q$ are real valued functions and $k_{\beta}$ is the inverse of function $\phi_{\beta}=\xi^3-\beta \xi$.

It is possible to show the following estimate for $j=0,1,2$,

\begin{equation}\label{est1} \quad\left|r_j(\tau)\right| \leq c\left(|\tau|^{1 / 3}+|\beta|^{1 / 2}\right).
\end{equation}
For a certain constant $c>0$
\begin{equation}\label{estp} p(\tau), q(\tau) \geq c\left(|\tau|^{1 / 3}+|\beta|^{1 / 2}\right) \quad \forall \tau \in \mathbb{R}.
\end{equation}

Now, we give the exact formulas for the IBVPs \eqref{kdvr} and \eqref{kdvl}. Initially let $\tilde h$, $\tilde f$ and $\tilde g$ extensions of the functions $h,$ $f$ and $g$, respectively, on all real line $\R$, satisfying
$$
\|\tilde f\|_{H^{\frac{s+1}{3}}(\R^+)}\lesssim \|f\|_{H^{\frac{s+1}{3}}(0,T)},\ \|\tilde g\|_{H^{\frac{s+1}{3}}(\R^+)}\lesssim \|g\|_{H^{\frac{s+1}{3}}(0,T)}\ \text{and}\ \|\tilde h\|_{H^{\frac{s}{3}}(\R^+)}\lesssim \|h\|_{H^s(0,T)}.
$$

For $\tilde{h} \in S^{\prime}(\mathbb{R})$, $t\in \R$ and $x>0$ we define
\begin{equation}\label{R}
\mathcal R_\beta(t, x ; \tilde{h})=\mathcal{F}_t^{-1}\left[e^{r_0(\tau) x} \hat{\tilde{h}}(\tau)\right](t) .
\end{equation}
For $\tilde{f}, \tilde{g} \in \mathcal{S}^{\prime}(\mathbb{R})$ define for $t\in \R$ and $x \leq 0$
\begin{equation}\label{L1}
\mathcal L_\beta^1(t, x ; \tilde{f}) \equiv \mathcal{F}_t^{-1}\left[\frac{r_1(\tau) e^{r_2(\tau) x}-r_2(\tau) e^{r_1(\tau) x}}{r_1(\tau)-r_2(\tau)} \hat{\tilde{f}}(\tau)\right](t)
\end{equation}
and
\begin{equation}\label{L2}
\mathcal L_\beta^2(t, x ; \tilde g) \equiv \mathcal{F}_t^{-1}\left[\frac{e^{r_1(\tau) x}-e^{r_2(\tau) x}}{r_1(\tau)-r_2(\tau)} \hat{\tilde{g}}(\tau)\right](t) .
\end{equation}
As the consequence of \eqref{R}, \eqref{L1} and \eqref{L2} we have the following essential traces for the functions $R_\beta \tilde{h}, L_\beta^1 \tilde{f}, L_\beta^1 \tilde{g}$ and its spatial derivatives,
\begin{equation}\label{traces}
\left\{\begin{array}{l}
 \mathcal R_{\beta} \tilde{h}(0, t)=\tilde{h}(t), \mathcal L_{\beta}^1 \tilde{f}(0, t)=\tilde{f}(t),\mathcal L_{\beta}^2 \tilde{g}(0, t),\ t\in\R; \\
\partial_x(\mathcal R_{\beta} \tilde{h})(0, t)=\mathcal{F}_t^{-1}\left[r_0(\tau) \hat{h}\right](t),\ t\in\R; \\
\partial_x\left(\mathcal L_{\beta}^1 \tilde{f}\right)(0, t)=0,\partial_x\left(\mathcal L^2_{\beta} \tilde{g}\right)(0, t)=\tilde{g}(t),\ t\in\R; \\
\partial_{x}^2(\mathcal R_{\beta} \tilde{h})(0, t)=\mathcal{F}_t^{-1}\left(r_0^2(\tau) \tilde{h}\right)(t),\ t\in\R; \\
\partial_{x}^2\left(\mathcal L_{\beta}^1 \tilde{f}\right)(0, t)=-\mathcal{F}^{-1}\left(r_1(\tau) r_2(\tau) \hat{\tilde{f}}\right)(t),\ t\in\R; \\
\partial_{x}^2\left(\mathcal L^2_{\beta} \tilde{g}\right)(0, t)=\mathcal{F}_t^{-1}\left(\left(r_1(\tau)+r_2(\tau)\right) \hat{\tilde{g}}\right)(t),\ t\in\R;.
\end{array}\right.
\end{equation}

\begin{remark}
The values of the traces given in \eqref{traces} will be essential in the construction of an integral formula equivalent to the problem \eqref{kdv3} 	satisfying the boundary condition \eqref{Zcondition}.
\end{remark}

\begin{lemma}
For any $h\in H^{\frac{s+1}{3}}(\R^+)$ and $s\geq 0$ we have that $\mathcal{R}h\in C(\R^+;H^{s}(\mathbb R^+))$. Moreover holds the following inequality
\begin{equation}
\|\mathcal{R}h(\cdot,t)\|_{C(\R^+;H^s(\R+))}\leq c(\beta,s) \|h\|_{H^{\frac{s+1}{3}}(\R^+)}
\end{equation}
\end{lemma}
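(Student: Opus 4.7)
The plan is to work with $v(x,t):=\mathcal R_\beta h(x,t)$ for an extension $\tilde h\in H^{(s+1)/3}(\R)$ of $h$ and to establish a $t$-uniform bound on $\|v(\cdot,t)\|_{H^s(\R^+_x)}$ together with continuity in $t$. Since $\re r_0(\tau)=-p(\tau)\le -c(|\tau|^{1/3}+|\beta|^{1/2})<0$ by \eqref{root} and \eqref{estp}, the integral defining $v$ converges absolutely, and differentiation under the integral yields $\partial_x^k v(x,t)=\mathcal F_t^{-1}[r_0(\tau)^k\,e^{r_0(\tau)x}\,\hat{\tilde h}(\tau)](t)$ for each integer $k\ge 0$. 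By interpolation it is enough to establish the $t$-uniform bound
\begin{equation*}
\|\partial_x^k v(\cdot,t)\|_{L^2(\R^+_x)}\lesssim c(\beta,k)\,\|\tilde h\|_{H^{(s+1)/3}(\R)},\qquad 0\le k\le\lceil s\rceil,
\end{equation*}
and then pass to the infimum over the admissible extensions $\tilde h$ of $h$.

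The central identity is obtained by expanding $|\partial_x^k v(x,t)|^2$ as a double Fourier integral in $(\tau_1,\tau_2)$, swapping the order of integration, and computing
\begin{equation*}
\int_0^\infty e^{(r_0(\tau_1)+\overline{r_0(\tau_2)})x}\,dx=\frac{1}{p(\tau_1)+p(\tau_2)-i(q(\tau_1)-q(\tau_2))},
\end{equation*}
which is licit because the exponent has strictly negative real part. This yields
\begin{equation*}
\|\partial_x^k v(\cdot,t)\|_{L^2(\R^+_x)}^2=\frac{1}{(2\pi)^2}\iint_{\R^2}\frac{r_0(\tau_1)^k\overline{r_0(\tau_2)}^k\,e^{i(\tau_1-\tau_2)t}\,\hat{\tilde h}(\tau_1)\overline{\hat{\tilde h}(\tau_2)}}{p(\tau_1)+p(\tau_2)-i(q(\tau_1)-q(\tau_2))}\,d\tau_1\,d\tau_2,
\end{equation*}
and taking absolute values removes the $t$-dependence at once. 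I would then apply the dispersive change of variable $\eta_j=q(\tau_j)$ (so that $\tau=2\beta\eta-8\eta^3$, with positive Jacobian $24\eta^2-2\beta$ when $\beta<0$, and $p(\eta)=\sqrt{3\eta^2-\beta}$), which reduces the kernel to $[p(\eta_1)+p(\eta_2)-i(\eta_1-\eta_2)]^{-1}$, and invoke a Schur-type test using the bounds $|r_0(\tau)|\lesssim(1+|\tau|)^{1/3}+|\beta|^{1/2}$ and $p(\tau)\gtrsim(1+|\tau|)^{1/3}+|\beta|^{1/2}$ from \eqref{est1} and \eqref{estp}. This produces the required bound by $\|\tilde h\|_{H^{(s+1)/3}(\R)}^2$ for each $k\le s$.

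Continuity of $t\mapsto v(\cdot,t)\in H^s(\R^+_x)$ is then a consequence of dominated convergence applied to the same double-integral representation, since the $t$-dependence enters only through the phase $e^{i(\tau_1-\tau_2)t}$ and the integrand admits an integrable majorant. The hardest point I anticipate is the Schur bound above: the naive majorant $|p(\tau_1)+p(\tau_2)-i(q(\tau_1)-q(\tau_2))|^{-1}\le[p(\tau_1)+p(\tau_2)]^{-1}$ discards the smoothing contribution of the imaginary part $q(\tau_1)-q(\tau_2)$ and falls short of the sharp $(s+1)/3$-regularity, especially at the low-regularity endpoint where the induced diagonal weight fails to be integrable. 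Retaining the full modulus $\sqrt{(p(\tau_1)+p(\tau_2))^2+(q(\tau_1)-q(\tau_2))^2}$ and exploiting the dispersive change of variable $\eta=q(\tau)$ is what closes the estimate cleanly.
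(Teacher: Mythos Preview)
Your approach is correct but genuinely different from the paper's. The paper changes variables via $\xi=\kappa_\beta(\lambda)$, the inverse of $\xi\mapsto\xi^3-\beta\xi$, so that the spatial exponent satisfies $\Re r_0(\xi^3-\beta\xi)=-p(\xi^3-\beta\xi)\le-\varepsilon|\xi|$; it then quotes as a black box the one–dimensional inequality of Bona--Sun--Zhang,
\[
\Big\|\int_{\mathbb R} e^{\gamma(\xi)x}f(\xi)\,d\xi\Big\|_{L^2(\mathbb R^+)}\le c(\varepsilon)\|f\|_{L^2}\quad\text{whenever }\Re\gamma(\xi)\le-\varepsilon|\xi|,
\]
which immediately gives $\|\partial_x^n\mathcal R_\beta h(\cdot,t)\|_{L^2(\mathbb R^+)}\lesssim\|h\|_{H^{(n+1)/3}}$. (For $\beta>0$ a low-frequency piece is split off and identified with a free evolution $S_\beta(t)v_n$; for the case $\beta<0$ relevant here no splitting is needed.) Your route instead expands the $L^2_x$-norm as a double $\tau$-integral, changes variables through $\eta=q(\tau)$, and finishes with a Schur/Hilbert-type argument. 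Both arguments ultimately rest on the same Hilbert inequality: the Bona--Sun--Zhang lemma is precisely your double-integral computation packaged once and for all, so the paper's version is shorter while yours is more self-contained.

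One correction to your diagnosis: the difficulty you flag about the imaginary part $q(\tau_1)-q(\tau_2)$ is not real. After your change of variable the bound $|p(\eta_1)+p(\eta_2)-i(\eta_1-\eta_2)|^{-1}\le(p(\eta_1)+p(\eta_2))^{-1}$ already yields a kernel dominated by $(\,|\eta_1|+|\eta_2|+|\beta|^{1/2})^{-1}$, since $p(\eta)=\sqrt{3\eta^2-\beta}$; this is the Hilbert kernel, and a weighted Schur test with $w(\eta)=(1+|\eta|)^{-1/2}$ (or the classical Hilbert inequality) closes the estimate with the sharp $(k+1)/3$ regularity. The imaginary part contributes nothing extra here. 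Relatedly, for the interpolation step you should record the integer bound as $\|\partial_x^k v\|_{L^2}\lesssim\|\tilde h\|_{H^{(k+1)/3}}$ rather than $H^{(s+1)/3}$, so that you can interpolate the map $\tilde h\mapsto v(\cdot,t)$ between $H^{(n+1)/3}\to H^n$ and $H^{(n+2)/3}\to H^{n+1}$ for $n=\lfloor s\rfloor$.
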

\begin{proof}
This result was obtained in \cite{Faminskii}. We only provide a sketch of the proof, as the argument of the proof will be used to make appropriate extensions of the function $\mathcal{R}h$ on the entire plane $\mathbb{R}^2.$
The following calculations will be usefull in this work.
Without loss of generality, we assume $h \in C_{0}^{\infty}(\mathbb  R^+)$. Let $n$ a nonegative integer number. By using \eqref{R} for $x>0, t \geq 0$
$$
D_x^n \mathcal R_{\beta}(t, x ; \mu)=\frac{1}{2 \pi} \int_{\mathbb{R}} e^{i \lambda t}(r(\lambda))^n e^{r(\lambda) x} \hat{h}(\lambda) d \lambda ,
$$
where abusing the notation we consider $h$ the extension for zero on the negative half-line. 
Now, we consider $\beta>0$, then
\begin{equation*}
\begin{split}
D_x^n \mathcal R_{\beta}(t, x ; h)&=\frac{1}{2 \pi} \int_{|\lambda|<2(\beta/ 3)^{3 / 2}}e^{i \lambda t}(r(\lambda))^n e^{r(\lambda) x} \hat{h}(\lambda)  d \lambda+\frac{1}{2 \pi} \int_{|\lambda|>2(\beta / 3)^{3 / 2}} e^{i \lambda t}(r(\lambda))^n e^{r(\lambda) x} \hat{h}(\lambda)  d \lambda \\
&\equiv I_1+I_2 .
\end{split}
\end{equation*}
Consider first $I_1$ (in this case $r(\lambda)=i q(\lambda)$ ). Changing variables $\xi=q(\lambda)$ we find
$$
I_1=\frac{1}{2 \pi} \int_{\mathbb{R}} e^{i\left(\xi^3-\beta \xi\right) t} e^{i \xi x} \hat{v}_n(\xi) d \xi=S_{\beta}\left(t, x ; v_n\right),
$$
where
\begin{equation}\label{vn}
v_n(x) \equiv-\mathcal{F}_x^{-1}\left[\chi \sqrt{\beta / 3}(\xi)\left(r\left(\xi^3-\beta \xi\right)\right)^n\left(3 \xi^2-\beta\right) \hat{h}\left(\xi^3-\beta \xi\right)\right](x).
\end{equation}
and according to the Parseval equality
$$
\left\|I_1(t, \cdot)\right\|_{L_{2,+}} \leq\left\|I_1(t, \cdot)\right\|_{L_2} \leq\left\|\hat{v}_n\right\|_{L_2} \leq c(\beta, n)\|\mu\|_{L_{2,+}} .
$$
For $I_2$ changing variable $\xi=\kappa_{\beta}(\lambda)$ we find, that
$$
I_2=\frac{1}{2 \pi} \int_{|\xi|>2 \sqrt{\beta / 3}} e^{i\left(\xi^3-\beta \xi\right) t}\left(r\left(\xi^3-\beta \xi\right)\right)^n e^{r\left(\xi^3-\beta \xi\right) x} \hat{\mu}\left(\xi^3-\beta \xi\right)\left(3 \xi^2-\beta\right) d \xi .
$$
By using the following inequality from \cite{bona2002} if certain continuous function $\gamma(\lambda)$ satisfies an inequality $\Re \gamma(\lambda) \leq-\varepsilon|\lambda|$ for some $\varepsilon>0$ and all $\lambda \in \mathbb{R}$, then
$$
\left\|\int_{\mathbb{R}} e^{\gamma(\lambda) x} f(\lambda) d \lambda\right\|_{L^{2}(\R^+)} \leq c(\varepsilon)\|f\|_{L_2} .
$$
By following \cite{Faminskii} we have that
$$
\begin{aligned}
\left\|I_2(t, \cdot)\right\|_{L^{2}(\mathbb R^+)} & \leq\left\|\left(1-\chi_{2 \sqrt{\beta / 3}}(\xi)\right)\left(r\left(\xi^3-\beta \xi\right)\right)^n \hat{h}\left(\xi^3-\beta \xi\right)\left(3 \xi^2-\beta\right)\right\|_{L_2} \\
& \leq c_1\|h\|_{H_{0}^{(n+1) / 3}(\mathbb R^+)}
\end{aligned}
$$
Combining the obtained estimates we derive (3.29) for $s=n$ and $\beta>0$. Finally, the case $\beta<0$ the partition of the integral is unnecessary.
\end{proof}

The following lemma obtained in \cite{Faminskii} and \cite{Faminskii2} gives the solutions for the linearized problems \eqref{kdvr} and \eqref{kdvl}.

\begin{lemma}\label{slr} Let $f \in H^{\frac{s+1}{3}}\left(\mathbb{R}^{+}\right), g \in H^{\frac{s+1}{3}}\left(\mathbb{R}^{+}\right)$and $h \in H^{\frac{s+1}{3}}\left(\mathbb{R}^{+}\right)$. 
	\begin{itemize}
\item[(i)] For any extension $\tilde{h} \in H^{\frac{s+1}{3}}(\mathbb{R})$ of $h$ on the entire line $\mathbb{R}$, the function $\left.\mathcal R_{\beta} \tilde{h }\right|_{(0,\infty)\times (0, T)}$ is the generalized solution of IBVP \eqref{kdvr}.
\item[(ii)] For any extensions $\tilde{f},\ \tilde g \in H^{\frac{s+1}{3}}(\mathbb{R})$ of  the functions $f$ and $g$ on the entire line the function $\left.\left(\mathcal L_{\beta}^1 \tilde f+\mathcal L^2_{\beta} \tilde g\right)\right|_{(-\infty,0)\times(0, T)}$ is  the generalized solution of the IBVP \eqref{kdvl}.
\end{itemize}
\end{lemma}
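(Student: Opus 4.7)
The plan is to verify the two claims by reading both potentials as Fourier multipliers in time whose symbols solve the characteristic algebraic equation $z^3+\beta z+i\tau=0$, and then check the boundary traces directly from \eqref{traces}. I will first treat $\mathcal R_\beta\tilde h$ on the right half-line, and then the two-term combination $\mathcal L_\beta^1\tilde g+\mathcal L_\beta^2\tilde h$ on the left half-line (with the appropriate relabeling of the boundary data $g,h$ to match \eqref{kdvl}).

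First I would verify that the PDE $\partial_t u+\partial_x^3u+\beta\partial_xu=0$ is satisfied in the sense of distributions. By differentiation under the time-Fourier integral, each operator $\partial_t,\partial_x^3,\beta\partial_x$ applied to $\mathcal F_t^{-1}[e^{r_j(\tau)x}\hat{\tilde h}(\tau)]$ returns, up to the factor $e^{r_j(\tau)x}\hat{\tilde h}(\tau)$, the symbols $i\tau$, $r_j(\tau)^3$ and $\beta r_j(\tau)$, respectively. Their sum vanishes because $r_j$ is, by definition \eqref{root} and the passage $\varepsilon\to 0^+$ in \eqref{algebraic}, a root of $z^3+\beta z+i\tau=0$. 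The decay estimates $\Re r_0(\tau)=-p(\tau)<0$ for $x>0$, and $\Re r_1(\tau),\Re r_2(\tau)>0$ for $x<0$, combined with the bound \eqref{est1} and the regularity of $\tilde h,\tilde f,\tilde g\in H^{(s+1)/3}(\mathbb R)$, make the exchange of $\partial_t,\partial_x$ with the time integral legitimate; this also guarantees the potentials have the proper decay at $x\to\pm\infty$ for the solution to live in the expected half-line spaces (as in the previous Lemma for $\mathcal R_\beta\tilde h$).

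Next I would verify the boundary conditions at $x=0$ by invoking \eqref{traces} directly. For \eqref{kdvr}: $\mathcal R_\beta\tilde h(0,t)=\tilde h(t)$, which equals $h(t)$ on $(0,T)$ by the extension hypothesis. For \eqref{kdvl}, using the (relabeled) combination $\mathcal L_\beta^1\tilde g+\mathcal L_\beta^2\tilde h$, the identities in \eqref{traces} yield
\[
(\mathcal L_\beta^1\tilde g+\mathcal L_\beta^2\tilde h)(0,t)=\tilde g(t)+0=g(t),\qquad \partial_x(\mathcal L_\beta^1\tilde g+\mathcal L_\beta^2\tilde h)(0,t)=0+\tilde h(t)=h(t),
\]
on $(0,T)$. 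Crucially, the trace values depend only on the restriction of the extensions to $(0,T)$, so different extensions produce the same restricted solution; this is the place where the freedom in choosing $\tilde f,\tilde g,\tilde h$ is harmless.

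The main technical point will be to show that these constructions actually produce the \emph{generalized} solutions with zero initial datum (so that, when added to the free-group term $S_\beta(t)\phi$ handling $v_0,w_0$, one recovers the full IBVP). The cleanest route is to approximate $h,f,g$ in the corresponding $H^{(s+1)/3}_0(\mathbb R^+)$ norm by smooth, compactly supported functions in $(0,\infty)$, extended by zero for $t\le 0$; then the calculation in the preceding lemma that expresses $\mathcal R_\beta$ (and analogously $\mathcal L_\beta^1,\mathcal L_\beta^2$) as a sum of a free-group term $S_\beta(t)v_n$ plus a contribution with contour in the region $\Re r<0$ shows by dominated convergence that $v(x,0)=0$ in the trace sense, while the regularity bounds from that same computation ensure the approximation passes to the limit. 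Combined with Steps 1 and 2, this establishes the existence statement. The hardest piece will be reconciling the boundary trace identities \eqref{traces} (which are defined pointwise in $t$ on all of $\mathbb R$) with the $H^{(s+1)/3}(0,T)$ trace inherited by the restriction to $(0,T)$; I would handle this by exploiting the continuity of the trace operator from $C(\mathbb R_x^\pm; H^{(s+1)/3}(\mathbb R_t))$ to $C(\mathbb R_x^\pm; H^{(s+1)/3}(0,T))$ and the density argument above.
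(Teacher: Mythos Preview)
The paper does not actually prove this lemma: it is stated as a result ``obtained in \cite{Faminskii} and \cite{Faminskii2}'' and no argument is given beyond the citation. Your attempt therefore goes further than the paper does, sketching a direct verification via the characteristic roots and the trace identities \eqref{traces}. The broad outline---check the PDE via $r_j^3+\beta r_j+i\tau=0$, read off boundary values from \eqref{traces}, and approximate to handle the initial trace---is the natural one and matches the structure of Faminskii's original arguments.

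That said, there are two concrete gaps. First, your claim that $\Re r_1(\tau),\Re r_2(\tau)>0$ is not quite right: from \eqref{root} one has $r_2(\tau)=ik_\beta(\tau)$ purely imaginary, so $\Re r_2=0$. This root contributes a free-propagation piece (exactly the $S_\beta(t)v_n$ term in the preceding lemma) rather than a decaying one, and the integrability and initial-trace arguments for $\mathcal L_\beta^1,\mathcal L_\beta^2$ must treat it differently from the genuinely decaying $r_1$ contribution. Second, your statement that ``the trace values depend only on the restriction of the extensions to $(0,T)$, so different extensions produce the same restricted solution'' conflates two things. The boundary traces at $x=0$ of course match on $(0,T)$ by construction, but the solution at interior points $x\neq 0$ is built from the full Fourier transform of $\tilde h$ (or $\tilde f,\tilde g$) and a priori depends on the extension. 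Independence of the restricted solution from the choice of extension is a \emph{uniqueness} statement for the IBVP, not an identity; the paper handles this separately in Remark~\ref{uni} by invoking Faminskii's uniqueness theorem, and your argument should do the same rather than claim it follows from the trace computation.
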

\begin{remark}\label{uni}
By using the result of uniqueness of Faminskii  \cite{Faminskii,Faminskii2} we have that the restriction on the set $[0,T]$ of the formulas obtained in Lemma \ref{slr}  for the solutions of \eqref{kdvr} and \eqref{kdvl} do not depend of the extensions $\tilde f$, $\tilde g$ and $\tilde h$.
	\end{remark}

\begin{remark}\label{ext1}
{(Extensions on all $\mathbb R^2$)}  Now, to work within the context of Bourgain spaces, we need to extend the formulas \eqref{R} for $x<0$ and $t<0$, as well as extend the formula \eqref{L1} for $x>0$ and $t<0$. This extension is performed following the methods outlined in the works by \cite{Faminskii} and \cite{Faminskii2}. First, to deal with \eqref{R}, we define the following function,
$$
\sigma(x)=\sigma(x ; \lambda) \equiv\left\{\begin{array}{l}
	e^{p(\lambda) x}, \quad x \geq 0, \\
	c_0 e^{-p(\lambda) x}+c_1 e^{-p(\lambda) x / 2}+\cdots+c_n e^{-p(\lambda) x / 2^n}, \quad x \leq 0,
\end{array}\right.
$$
where the coefficients $c_0, c_1, \ldots c_n$ are chosen such, that
$$\left\{\begin{array}{l}
	c_0+c_1+\cdots+c_n=1, \\
	(-1)\left(c_0+\frac{1}{2} c_1+\cdots+\frac{1}{2^n} c_n\right)=1, \\
	\left.\cdots \cdots \cdots \cdots \cdots \cdots \cdots \cdots\right. \\
	(-1)^n\left(c_0+\frac{1}{2^n} c_1+\cdots \cdots+\frac{1}{2^{n^2}} c_n\right)=1.
\end{array}\right.$$
Note that the system above has a solution, and with these coefficients, the function $\rho \in H^{n+1}(\mathbb R)$. 
Thus, we put for any $(x,t) \in \mathbb{R}^2$ in the case $\beta \leq 0$
$$
R_{\beta}(t, x ; \tilde h) \equiv \mathcal{F}_t^{-1}\left[\sigma(x ; \lambda) e^{i q(\lambda) x} \hat{\tilde h}(\lambda)\right](t),
$$
and in the case $\beta>0$
$$
\begin{aligned}
	R_{\beta}(t, x ; \mu) & \equiv S_{\beta}\left(t, x ; v_0\right)+\mathcal{F}_t^{-1}\left[\sigma(x ; \lambda) e^{i q(\lambda) x} \hat{\mu}(\lambda)\left(1-\chi_{2(\beta / 3)^{3 / 2}}(\lambda)\right)\right](t) \\
	& \equiv S_{\beta}\left(t, x ; v_0\right)+J_{0 \beta}(t, x ; \mu),
\end{aligned}$$ 
where $v_0$ is given by \eqref{vn}.
An extension of the functions $\mathcal L_{\beta}^1\tilde f$  and $\mathcal{L}_{\beta}^2 \tilde g$ can be done in a similar way, for more details the reader can see 	\cite{Faminskii2}.
\end{remark}

\begin{remark}
Given any functions $f \in H^{\frac{s+1}{3}}\left(\mathbb{R}^{+}\right), g \in H^{\frac{s+1}{3}}\left(\mathbb{R}^{+}\right)$ and $h \in H^{\frac{s+1}{3}}\left(\mathbb{R}^{+}\right)$, in the rest of the paper we can consider the functions $\mathcal R h$, defined on the set $(x,t)\in (\R^+)^2$, as well the functions $\mathcal L_{\beta}^1 f +\mathcal L_{\beta}^2 g$ on the set $(x,t)\in \R^-\times \R^+$ . Sometimes, we will consider its appropriate extensions denoted by $\mathcal R \tilde h$ and  $\mathcal L_{\beta}^1 \tilde f +\mathcal L_{\beta}^2 g$, respectively,  in all plane $\R^2$ in the spirit of Remark \ref{ext1}.
\end{remark}

The following lemma states the fundamental estimates for the potential solutions and was obtained by Faminskii in \cite{Faminskii} and \cite{Faminskii2}. 

\begin{lemma}
 Let $s \geq 0$. For any functions $f, h \in H^{\frac{s+1}{3}}(\mathbb{R}^+)$ and $g\in H^{\frac{s}{3}}(\mathbb R^+)$ the following estimates are ensured:

\begin{itemize}
\item[(a)]
\begin{equation*}
\begin{split}
	&\|\psi(t)\mathcal R_{\beta} \tilde h(x, t)\|_{C\left(\mathbb{R}_t ; H^s\left(\mathbb{R}_x\right)\right)}+\left\|\psi(t)\partial_x^n(\mathcal R_{\beta} \tilde h(x, t))\right\|_{C \left(\mathbb{R}_x; H^{\frac{s+1-n}{3}}\left(\mathbb{R}_t\right)\right)}\\
	&+\|\psi(t) \mathcal R_{\beta} \tilde h(x, t)\|_{X_{s, b, \beta, \sigma}} \lesssim\| h\|_{H^{\frac{s+1}{3}}\left(\mathbb{R}^{+}\right)},\quad (n=1,2, \ldots)
\end{split}
\end{equation*}

\item[(b)]
\begin{equation*}
	\begin{split}
&\left\|\psi(t)\mathcal L_{\beta}^1 \tilde f(x, t)\right\|_{C\left(\mathbb{R}_t ; H^s\left(\mathbb{R}_x\right)\right)}+\left\|\psi(t)\partial_x^n\left(\mathcal L_{\beta}^1 \tilde f(x, t)\right)\right\|_{C\left(\mathbb{R}_x ; H^{\frac{s+1-n}{3}}\left(\mathbb{R}_t\right)\right)} \\
&\quad+\left\|\psi(t) \mathcal L_{\beta}^1 \tilde f(x, t)\right\|_{X_{s, b, \beta, \sigma}} \lesssim \| f \|_{H^{\frac{s+1}{3}}\left(\mathbb{R}^{+}\right)},\quad(n=1,2, \ldots)
\end{split}
\end{equation*}
\item[(c)]
\begin{equation*}
	\begin{split}
&\left\|(\psi(t))\mathcal L_{\beta}^2 \tilde g(x, t)\right\|_{C\left(\mathbb{R}_t ; H^s\left(\mathbb{R}_x\right)\right)}+\left\|(\psi(t))\partial_x^n\left(\mathcal L_{\beta}^2 \tilde g(x, t)\right)\right\|_{C\left(\mathbb{R}_x; H^{\frac{s+1-n}{3}}\left(\mathbb{R}_t\right)\right)} \\
&\quad+\left\|\mathcal L_{\beta}^2 \tilde  g(x, t)\right\|_{X_{s, b, \beta, \sigma}} \lesssim\| g\|_{H^{s/3}\left(\mathbb{R}^{+}\right)}, \qquad (n=1,2, \ldots) .
\end{split}
\end{equation*}
\end{itemize}
\end{lemma}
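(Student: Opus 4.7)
The plan is to proceed by direct Fourier analysis on the explicit extensions of $\mathcal R_\beta \tilde h$, $\mathcal L_\beta^1 \tilde f$, $\mathcal L_\beta^2 \tilde g$ to all of $\mathbb R^2$ constructed in Remark \ref{ext1}. The workhorses throughout are the bounds \eqref{est1} and \eqref{estp}, which give $|r_j(\tau)|\lesssim |\tau|^{1/3}+|\beta|^{1/2}$ and $\re r_0(\tau)=-p(\tau)\leq -c(|\tau|^{1/3}+|\beta|^{1/2})$, together with the lower bound $|r_1(\tau)-r_2(\tau)|\gtrsim p(\tau)$ that follows from \eqref{root}. All three potentials admit parallel treatment, since each is the inverse Fourier transform in $t$ of an explicit kernel in $x$ acting on the datum.

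I would handle first the spatial-trace estimates, which are the simplest. From $\partial_x^n \mathcal R_\beta \tilde h=\mathcal F_t^{-1}[r_0(\tau)^n e^{r_0(\tau)x}\hat{\tilde h}]$ and $|e^{r_0(\tau)x}|=e^{-p(\tau)x}\leq 1$ for $x\geq 0$, Plancherel in $t$ combined with $|r_0(\tau)|^n\lesssim (|\tau|^{1/3}+|\beta|^{1/2})^n$ yields the desired bound uniformly in $x\geq 0$. For $x<0$ the cutoff $\sigma(x;\lambda)$ used in the extension of Remark \ref{ext1} was engineered precisely to lie in $H^{n+1}$ in $x$, which preserves the same estimate. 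The analogous spatial traces for $\mathcal L_\beta^1 \tilde f$ and $\mathcal L_\beta^2 \tilde g$ then follow from \eqref{traces}, the denominators being absorbed by $|r_1-r_2|\gtrsim p$.

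For the $C(\mathbb R_t; H^s(\mathbb R_x))$ estimate on $\mathcal R_\beta \tilde h$, I would fix $t$, apply Plancherel in $x$ on the extension, and split at $|\lambda|=2(\beta/3)^{3/2}$ when $\beta>0$ (no split is needed when $\beta\leq 0$). In the high-frequency region the kernel $\sigma(x;\lambda)e^{iq(\lambda)x}$ decays uniformly in $x$ because $p(\lambda)\gtrsim|\lambda|^{1/3}$; the substitution $\xi=q(\lambda)$, whose Jacobian $3\xi^2-\beta$ is controlled by \eqref{estp}, reduces the estimate to Plancherel applied to the function $v_0$ from \eqref{vn}. The low-frequency piece is precisely the free evolution $S_\beta(t,x;v_0)$, so Lemma \ref{group} together with the direct bound $\|v_0\|_{H^s(\mathbb R)}\lesssim\|h\|_{H^{(s+1)/3}(\mathbb R^+)}$ closes that part. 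The analysis transfers verbatim to $\mathcal L_\beta^1 \tilde f$ and $\mathcal L_\beta^2 \tilde g$, the factor $(r_1-r_2)^{-1}$ being absorbed by the lower bound on $|r_1-r_2|$.

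The main obstacle is the Bourgain norm bound, because the weight $\nu_{b,\beta,\sigma}(\tau,\xi)^2$ splits into an oscillation piece $(1+|\tau-\xi^3+\beta\xi|)^{2b}$ and a low-frequency correction $\chi_{[-1,1]}(\xi)(1+|\tau|)^{2\sigma}$ of rather different nature. Computing the full space-time Fourier transform of each extended potential gives expressions of the schematic form $\hat{\tilde h}(\tau)/(i\xi-r_0(\tau))$ plus rapidly decaying tails coming from the cutoff $\sigma(x;\lambda)$, and a case analysis depending on whether $\xi$ is near or far from $q(\tau)$ produces $|i\xi-r_0(\tau)|\gtrsim (1+|\tau-\xi^3+\beta\xi|)^{1/3}$; this absorbs the first weight since $b<1/2$. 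For the low-frequency correction the constraint $|\xi|\leq 1$ forces $|i\xi-r_0(\tau)|\gtrsim p(\tau)\gtrsim|\tau|^{1/3}$ for $|\tau|$ large, so the ratio $(1+|\tau|)^{2\sigma}/|i\xi-r_0|^2$ is integrable in $\tau$ exactly when $\sigma<2/3$, which is the reason for imposing $\sigma\in(1/2,2/3)$ in the definition of the Bourgain spaces. The same case analysis, exploiting the algebraic cancellations present in $(r_1-r_2)^{-1}(r_1 e^{r_2 x}-r_2 e^{r_1 x})$ and $(r_1-r_2)^{-1}(e^{r_1 x}-e^{r_2 x})$, then yields the Bourgain bounds for parts (b) and (c).
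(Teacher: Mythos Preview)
The paper does not supply its own proof of this lemma; it merely states the estimates and attributes them to Faminskii \cite{Faminskii,Faminskii2}. Your sketch is consistent with Faminskii's methods (and with the partial argument the paper does reproduce for the preceding lemma on $\|\mathcal R h\|_{C(\mathbb R^+;H^s(\mathbb R^+))}$), so there is no genuine divergence to report: you are reconstructing the cited proof rather than offering an alternative.

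One small caution on the Bourgain-norm step. Your schematic Fourier transform $\hat{\tilde h}(\tau)/(i\xi-r_0(\tau))$ is correct only for the piece of the extension living on $x>0$; the $\sigma(x;\lambda)$ tail on $x<0$ contributes a finite sum of terms of the form $\hat{\tilde h}(\tau)/(i\xi+p(\tau)/2^k - iq(\tau))$, and for these the denominator can vanish (the real part is no longer sign-definite). In Faminskii's argument this is handled by observing that the poles $\xi=q(\tau)+ip(\tau)/2^k$ sit at a distance $\gtrsim p(\tau)\gtrsim |\tau|^{1/3}$ from the real axis, so the integrals in $\xi$ are still controlled after integrating against the Bourgain weight; your phrase ``rapidly decaying tails'' glosses over this, and it is worth making explicit. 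Apart from that, the mechanism you describe---the lower bound $|i\xi-r_0(\tau)|\geq p(\tau)\gtrsim (1+|\tau|)^{1/3}$ combined with the restriction $b<1/2$, $\sigma<2/3$---is exactly what drives the estimate.
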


\section{An integral equation that solves the problem on a balanced graph}\label{4}

Initially, we point out  that it suffices for our proof of Theorem \ref{theorem1}  to consider the case of only two edges, i.e. $|\bold{E}_{-}|=1$ and $|\bold{E}_{+}|=1$, because  on the general case of  a balanced graph $\mathcal G$  the conditions \eqref{Zcondition} in the vectorial notation takes the following form (for $\bold{u}=(u_{1},...,u_{n}, v_{1},..., v_{n})$)
\begin{equation}\label{Z*}
	\begin{cases}
		(u_{1}(0-,t), u_2(0-,t),...,u_{n}(0-,t))=(v_{1}(0+,t),...,v_{n}(0+,t),\ \ \text{in\ the\ sense\ of } H^{\frac{s+1}{3}}(0,T);\\ v'_{k}(0+,t)- u'_{k}(0-,t)=Zu_k(0-,t),\ \ (k=1,...,n)\  \text{in\ the\ sense\ of } H^{\frac{s}{3}}(0,T);\\  \frac{Z^2}{2}u_{k}(0-,t)+Zu'_{k}(0-,t)=v''_{k}(0+,t)-u''_{k}(0-,t),\ \ (k=1,...,n)\ \text{in\ the\ sense\ of } H^{\frac{s-1}{3}}(0,T).
	\end{cases}
\end{equation}

\medskip

   Now, fix $s \geq 1, b \in[7 / 16,1 / 2)$ and $\alpha \in(1 / 2,2 / 3)$. Our strategy is to extend the problem to all lines $\mathbb{R}$ along each edge, aiming to derive an integral equation that solves the extended problem. This solution should be such that when restricted to each edge, it solves the original problem \eqref{kdv3}-\eqref{Zcondition} in the sense of distributions.  In this context, our solution will be denoted as $\bold u=(u, v)=\left(\left.\tilde{u}\right|_{\mathbb{R}^{+}\times (0,T)},\left.\tilde{v}\right|_{\mathbb{R}^{-}\times(0,T)}\right)$, where $\tilde{u}$ and  $\tilde{v}$ are solutions of the corresponding extended problem for all $\mathbb{R}$ along each edge, in the sense of distributions.

To do this, we start by taking extensions $\widetilde{u}_0$ and $\widetilde{v}_0$ of the initial data $u_0$ and $v_0$ satisfying
$$
\left\|\widetilde{u}_0\right\|_{H^s(\mathbb{R})} \leq c\left\|u_0\right\|_{H^s\left(\mathbb{R}^{+}\right)} \text {and }\left\|\widetilde{v}_0\right\|_{H^s(\mathbb{R})} \leq c\left\|v_0\right\|_{H^s\left(\mathbb{R}^{-}\right)} \text {. }
$$

 We will find solutions for the Cauchy problem \eqref{kdv3}-\eqref{Zcondition} as the restriction of the functions satisfying  following integral equations
\begin{equation}\label{u}
\tilde u(x,t)=\psi(t)S_{\beta}(t)\tilde u_0+\psi(t)\mathcal{K}(\psi_T^ 2(t)\tilde u\tilde u_x)+\psi(t)\mathcal R_{\beta}h(x,t)
\end{equation}
and 
\begin{equation}\label{v}
\tilde v(x,t)=\psi(t)S_{\beta}(t)\tilde v_0+\psi(t)\mathcal{K}(\psi_T^ 2(t)\tilde v\tilde v_x)+\psi(t)\mathcal L_{\beta}^1f(x,t)+\psi(t)\mathcal L_{\beta}^2g(x,t),
\end{equation}
where the functions $f$, $g$ and $h$ depend of the initial conditions $\widetilde{u}_0$ and $\widetilde{v}_0$,  as well as the unknown functions $\tilde u$ and $\tilde v$.  These functions will be chosen in a way that satisfies the boundary conditions \eqref{Zcondition}. Here, $\mathcal{K}$ represents the classical Duhamel operator defined in \eqref{Duhamel}. With suitable choices for $f$, $g$, and $h$, we will observe that equations \eqref{u} and \eqref{v} become integral equations for the Cauchy problem \eqref{kdv3}-\eqref{Zcondition}.

In order to simplify the notation we denote 

\begin{equation}\label{F1}{F_1(x,t)}:=F_1(\tilde u_0,\tilde u,x,t)=\psi(t)S_{\beta}(t)\tilde u_0+\psi(t)\mathcal{K}(\psi_T^ 2(t)\tilde u\tilde u_x)\end{equation} and 
\begin{equation}\label{F2}{F_2(x,t)}:=F_2(\tilde v_0, \tilde v,x,t)=\psi(t)S_{\beta}(t)\tilde v_0+\psi(t)\mathcal{K}(\psi_T^ 2(t)\tilde v \tilde v_x).
\end{equation}

 Now, we start the process of choices of unknown functions by making use of the boundary conditions \eqref{Zcondition}. 
 
 Initially, by using \eqref{u}, \eqref{v} and  the boundary conditions \eqref{Zcondition} we impose that the functions $f$, $g$ and $h$ must satisfy the following relations
\begin{equation}\label{trace1}
\tilde u(0,t)=\tilde v(0,t)\implies h(t)+F_1(0,t)=F_2(0,t)+f(t)\implies \hat h-\hat f=\hat F_2(0,\tau)-\hat F_1(0,\tau),\end{equation}

\begin{equation}\label{trace2}
\begin{split}& \tilde u'(0,t)-\tilde v'(0,t)=Z\tilde \tilde v(0,t)\implies r_0\hat h+ \partial_x \hat{F}_{1}(0,t)-\hat g- \partial_x \hat F_{2}(0,t)=Z(\hat f+\hat F_2)\\
&\implies r_0\hat h-\hat g-Z\hat f=- \partial_x\hat{F}_{2}(0,\tau)+Z\hat F_2+ \partial_x\hat F_{2}(0,\tau)
\end{split}\end{equation}
and
\begin{equation}\label{trace3}
\begin{split}
&\tilde u''(0,t)-\tilde v''(0,t)=\frac{Z^2}{2}v(0,t)+Zv'(0,t)\\
&\implies r_0^2(\tau)\hat h+\partial_x^2\hat F_{1}+r_1(\tau)r_2(\tau)\hat f
-(r_1(\tau)+r_2(\tau))\hat g-\partial_{x}^2\hat{F}_{2}=\frac{Z^2}{2}(\hat{f}+\hat{F}_2)+Z(\hat g+\partial_x\hat F_{2}).\end{split}
\end{equation}

Here $\hat{}$ denotes the Fourier transform on the variable $t$.

The expressions  \eqref{trace1}, \eqref{trace2} and \eqref{trace3} are written in the context of matrices in the following form:
\begin{equation}\label{tec0}
\left[ \begin{array}{ccc}
	1 & -1  & 0 \\ 
	r_0(\tau) & -Z & -1\\
	r_0^2(\tau) & r_1(\tau)r_2(\tau)-\frac{Z^2}{2}  & -(r_1(\tau)+r_2(\tau)+Z)
\end{array} \right] \left[ \begin{array}{c}
\hat h  \\ 
\hat f\\
\hat g 
\end{array} \right]=\left[ \begin{array}{c}
\hat{F}_2 -\hat{F}_1 \\ 
-\partial_x\hat{F}_{1}+\partial_{x}\hat{F}_{x}+Z\hat{F}_2\\
-\partial_{x}^2\hat{F}_{1}+\partial_{x}^2\hat{F}_{2}+\frac{Z^2}{2}\hat{F}_2+Z\partial_{x}\hat{F}_{2}
\end{array} \right].
\end{equation}
We denote this in the reducing form
\begin{equation}\label{matrix}
M\bold{f}:=M(r_0(\tau),r_1(\tau),r_2(\tau),Z)\bold f=\hat{\bold F},
\end{equation}
where $M$ denotes the first matrix in \eqref{tec0}, $\bold{f}$ denotes de column vector with coordinates $\hat h$, $\hat f$ and $\hat g$, and $\hat{\bold F}$ the column vector with entries $\hat{F}_2 -\hat{F}_1$, 
$-\partial_x\hat{F}_{1}+\partial_x\hat{F}_{2}+z\hat{F}_2$ and 
$-\partial_x^2\hat{F}_{1}+\partial_x^2\hat{F}_{2}+\frac{Z^2}{2}\hat{F}_2+Z\partial_x\hat{F}_{2}$.

The determinant of $M$ is given by
\begin{equation}
\det M= \frac{Z^2}{2}+(r_1(\tau)+r_2(\tau)-r_0(\tau))+r_1(\tau)r_2(\tau)-r_0(\tau)r_1(\tau)-r_0(\tau)r_2(\tau).
\end{equation}

Now, denotes $k_{\beta}$ the inverse of function $\phi_{\beta}(\xi)=\xi^3-\beta \xi$. By using \eqref{root} we have
\begin{equation}
\begin{split}
&r_1+r_2-r_0=2p+ik_{\beta}\\
&r_1r_2=-qk_{\beta}+ik_{\beta}p\\
&-r_0r_1=q^2+p^2,\\
&-r_0r_2=qk_{\beta}-pk_{\beta}i.
\end{split}
\end{equation}
It follows that 
\begin{equation}
r_1(\tau)r_2(\tau)-r_0(\tau)r_1(\tau)-r_0(\tau)r_2(\tau)=q^2(\tau)+p^2(\tau)
\end{equation}
and
\begin{equation}
\det M(r_0(\tau),r_1(\tau),r_2(\tau),Z)= \frac{Z^2}{2}+ q^2(\tau)+p^2(\tau)+
2p(\tau)+ik_{\beta}(\tau).
\end{equation}

Now, we will prove that, for any $\tau$ the matrix function $M(\tau)$ is invertible. It follows from the fact that function $k_{\beta}$ has an unique root $\tau=0$. Then for $\tau\neq 0$ the determinant of $M$ is nonzero. Now, for $\tau=0$, we have that
\begin{equation}\label{tec1}
\det M(r_0(0),r_1(0),r_2(0),Z)= \frac{Z^2}{2}+ q^2(0)+p^2(0)+
2p(0).
\end{equation}

By using \eqref{estp} we have that $p(0)>0$ under the assumption $\beta<0$, then the expression \eqref{tec1} is nonzero. 

Thus, we have proved that the matrix $M$ in invertible for any $\tau$ $fixed$, then follows by \eqref{matrix} that
\begin{equation}\label{tec2}
\bold f=M^{-1}\hat {\bold F},
\end{equation}
where $M^{-1}(\tau)$ denotes the inverse of matrix function $M(\tau)$. Then, formula \eqref{tec2} defines the functions $f$, $g$ and $h$ in terms of  the initial conditions $\widetilde{u}_0$ and $\widetilde{v}_0$,  the unknown functions $\tilde u$ and $\tilde v$ and satisfy the boundary conditions \eqref{Zcondition}. Finally, \eqref{u} and \eqref{v} joint with \eqref{tec2} define the integral equation that solve the Cauchy problem \eqref{kdv3}-\eqref{Zcondition}.

\section{Energy estimates for the boundary functions $f$, $g$ and $h$}\label{5}

In this section, we will estimate the boundary vector function $\bold f=(f,g,h)$. More precisely, we will demonstrate that the previously obtained functions $f$, $g$, and $h$ belong to the trace spaces $H^{\frac{s+1}{3}}(\mathbb{R}^+)$, given the assumptions $\widetilde{u}_0, \widetilde{v}_0 \in H^s(\mathbb{R})$, and that the functions $\tilde{u}$ and $\tilde{v}$ are in $C([0, T]; H^s(\mathbb{R}))$.  To simplify the computations we will consider $Z=1$. An elementary computation yields the following formula for the matrix inverse of $M$ 
\begin{equation}\label{inverse}
\frac{1}{n_1}\left[ \begin{array}{ccc}
\frac{a_{11}}{n_2} & a_{12} & a_{13} \\ 
\frac{a_{21}}{n_2} & a_{22} & a_{23}\\
a_{31} & a_{32}  & a_{33}
\end{array} \right],
\end{equation}
where the entries of the matrix are  functions on the variable $\tau$ and they depend of the roots of algebraic equation \eqref{algebraic} and they are given by
\begin{equation}\label{n1}
\begin{cases}
 &n_1=2r_0^2-2r_0r_1-2r_0-2r_0r_2+2r_1+2r_1r_2+2r_2+1,\\
&n_2=1-2r_0^2-2r_1r_2,\\
&a_{11}=-4r_0^2r_1-4r_0^2r_2-4r_0^2r_1r_2-2r_0^2+2r_1-4r_1r_2^2-4r_1^2r_2^2-4r_1^2r_2+2r_2+1,\\
&a_{21}=-2r_0^3-2r_0^2r_1-2r_0^2r_2+2r_0r_1r_2-r_0+r_1-2r_1r_2^2-2r_1^2+2r_2-2r_1r_2+1,\\
&a_{31}=r_0(1-2r_0-2r_1r_2),\\
&a_{12}=a_{22}=2(r_1+r_2+1),\\
&a_{32}=1-2r_0^2-2r_1r_2,\\
&a_{13}=a_{23}=2,\\
&a_{33}=2(r_0-1).
\end{cases}
\end{equation}

Thus, we have the following exact expressions for the functions $f,\ g$ and $h$:
\begin{equation}\label{fgh}
\left\{\begin{array}{c}
\hat{h}(\tau)=\frac{1}{n_1}\left(\frac { a _ { 1 1 } } { n _ { 2 } } \left(\hat{F}_1(0, \tau)-\hat{F}_2(0, \tau)+a_{12}\left(-\partial_x \hat{F}_1(0, \tau)+\partial_x \hat{F}_2(0, \tau)\right)\right.\right. \\
\left.\quad+a_{13}\left(-\partial_x^2 \hat{F}_1(0, \tau)+\partial_x^2 \hat{F}_1(0, \tau)+\frac{Z^2}{2} \hat{F}_2(0, \tau)+Z \partial_x \hat{F}_2(0, \tau)\right)\right) \\
\hat{f}(\tau)=\frac{1}{n_1}\left(\frac { a _ { 2 1 } } { n _ { 2 } } \left(\hat{F}_1(0, \tau)-\hat{F}_2(0, \tau)+a_{22}\left(-\partial_x \hat{F}_1(0, \tau)+\partial_x \hat{F}_2(0, \tau)\right)\right.\right. \\
\left.\quad+a_{23}\left(-\partial_x^2 \hat{F}_1(0, \tau)+\partial_x^2 \hat{F}_1(0, \tau)+\frac{Z^2}{2} \hat{F}_2(0, \tau)+Z \partial_x \hat{F}_2(0, \tau)\right)\right) \\
\hat{g}(\tau)=\frac{1}{n_1}\left(a _ { 3 1 } \left(\hat{F}_1(0, \tau)-\hat{F}_2(0, \tau)+a_{32}\left(-\partial_x \hat{F}_1(0, \tau)+\partial_x \hat{F}_2(0, \tau)\right)\right.\right. \\
\left.\quad+a_{33}\left(-\partial_x^2 \hat{F}_1(0, \tau)+\partial_x^2 \hat{F}_1(0, \tau)+\frac{Z^2}{2} \hat{F}_2(0, \tau)+Z \partial_x \hat{F}_2(0, \tau)\right)\right).
\end{array}\right.
\end{equation}
The following lemma provides the necessary estimates for the functions that appear in \eqref{n1} at high frequencies.
\begin{lemma}\label{tec3}Let $n_i=n_i(\tau)$ given as in \eqref{n1}, ($i=1,2$) and $a_{ij}=a_{ij}(\tau)$, $(i,j\in\{1,2,3\})$ the functions given by \eqref{n1} then hold
$$
\begin{aligned}
\left|n_i(\tau)\right| & \gtrsim\langle\tau\rangle^{\frac{2}{3}},(i=1,2) \text { for all }|\tau|>2; \\
\left|a_{11}(\tau)\right| & \lesssim \langle\tau\rangle ^{4/3} \text{ for all }|\tau|>2;\\
\left|a_{21}(\tau)\right|+\left|a_{31}(\tau)\right| & \lesssim\langle\tau\rangle \text { for all }|\tau|>2; \\
\left|a_{12}(\tau)\right|+\left|a_{22}(\tau)\right|+\left|a_{33}\right| & \lesssim\langle\tau\rangle^{1 / 3} \text { for all }|\tau|>2; \\
\left|a_{32}(\tau)\right| & \lesssim\langle\tau\rangle^{1 / 3} \text { for all }|\tau|>2; \\
\left|a_{13}(\tau)\right|+\left|a_{23}(\tau)\right| & \lesssim 1 \text { for all } \tau \in \mathbb{R}.
\end{aligned}
$$
\end{lemma}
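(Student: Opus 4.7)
The first step is to exploit that $r_0(\tau), r_1(\tau), r_2(\tau)$ are the three roots of $z^3 + \beta z + i\tau = 0$ (from the $\varepsilon \to 0^+$ limit in \eqref{algebraic}), so Vieta's formulas give
\[
r_1 + r_2 = -r_0,\qquad r_1 r_2 = \beta + r_0^2,\qquad r_0 r_1 r_2 = -i\tau.
\]
Substituting these into \eqref{n1} reduces $n_1$ and $n_2$ to polynomials in the single variable $r_0(\tau)$,
\[
n_1 = 6r_0^2 - 4r_0 + 2\beta + 1,\qquad n_2 = a_{32} = 1 - 2\beta - 4r_0^2,
\]
and likewise $a_{12} = a_{22} = 2(1 - r_0)$, $a_{33} = 2(r_0 - 1)$, and $a_{31} = r_0(1 - 2\beta - 2r_0 - 2r_0^2)$. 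The entries $a_{11}$ and $a_{21}$ are polynomials in $(r_0, r_1, r_2)$ of total degree four and three respectively; Vieta does not reduce them fully to polynomials in $r_0$ alone, but their sizes are controlled termwise.

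The upper bounds then follow from the triangle inequality together with $|r_j(\tau)| \leq c(|\tau|^{1/3} + |\beta|^{1/2}) \lesssim \langle \tau \rangle^{1/3}$ (from \eqref{est1}): degree-one terms give $\langle \tau \rangle^{1/3}$ for $a_{12}, a_{22}, a_{33}$; degree-three expressions, using in particular $|r_0 r_1 r_2| = |\tau|$ to estimate the triple-product contributions, give $\langle \tau \rangle$ for $a_{21}$ and $a_{31}$; the quartic $a_{11}$ gives $\langle \tau \rangle^{4/3}$; and $a_{13} = a_{23} = 2$ is trivial. All implicit constants depend on $\beta$. (The identity $a_{32} = n_2$ also delivers $|a_{32}| \lesssim \langle \tau \rangle^{2/3}$, slightly stronger than what the estimate involves it for downstream.)

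For the lower bounds on $|n_1|$ and $|n_2|$ I would split $\{|\tau|>2\}$ into an asymptotic part $|\tau| \geq T_0(\beta)$ and a compact part $2 < |\tau| \leq T_0(\beta)$. On the asymptotic part, estimate \eqref{estp} yields $|r_0(\tau)|^2 = p(\tau)^2 + q(\tau)^2 \gtrsim |\tau|^{2/3}$, so choosing $T_0$ large makes the quadratic term $6r_0^2$ (respectively $-4r_0^2$) dominate the linear and constant pieces, giving $|n_i(\tau)| \gtrsim |\tau|^{2/3}$ there. On the compact part I argue by continuity and compactness after showing that $n_1, n_2$ have no zeros on $\mathbb R$. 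For $n_2$ the zeros in $r_0$ are the real values $\pm \tfrac12 \sqrt{1 - 2\beta}$, excluded because $\mathrm{Im}\, r_0(\tau) = q(\tau) \geq c |\beta|^{1/2} > 0$ under the standing assumption $\beta < 0$. For $n_1$ the two zeros in $r_0$ are either both real (when $\beta \leq -\tfrac16$) or complex with real part $\tfrac13 > 0$ (when $-\tfrac16 < \beta < 0$); both families are incompatible with $\mathrm{Re}\, r_0(\tau) = -p(\tau) < 0$. The chief delicate point in the whole proof is precisely this non-vanishing analysis, which amounts to locating the curve $\tau \mapsto r_0(\tau)$ in the open left half-plane; once that is settled, the compact-range estimate and the asymptotic estimate glue together and the lemma follows.
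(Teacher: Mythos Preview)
Your approach is correct in outline but differs from the paper's, and there is one small slip in your non-vanishing argument for $n_1$.

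\textbf{Comparison with the paper.} The paper does not pass through Vieta or a compact/asymptotic split at all. Instead it uses the explicit parametrization \eqref{root}, $r_0=-p+iq$, $r_1=p+iq$, $r_2=ik_\beta$, and computes directly that
\[
\operatorname{Re} n_1(\tau)=(2p(\tau)+1)^2,
\]
so that $|n_1(\tau)|\geq (2p(\tau)+1)^2\gtrsim \langle\tau\rangle^{2/3}$ follows in one line from \eqref{estp}. (Your reduction $n_1=6r_0^2-4r_0+2\beta+1$ is consistent with this: substituting $r_0=-p+iq$ and the Vieta identity $3q^2-p^2=\beta$ gives exactly $\operatorname{Re} n_1=4p^2+4p+1$.) The same explicit computation handles $n_2$. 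So the paper's route is shorter and avoids the root-location discussion entirely; your Vieta reduction, on the other hand, has the advantage of making the polynomial structure transparent and would adapt more readily if the explicit form \eqref{root} were not available. The upper bounds on the $a_{ij}$ are handled identically in both, by counting degrees and invoking \eqref{est1}.

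\textbf{The slip.} In your treatment of $n_1$, you write that when $\beta\leq -\tfrac16$ the two roots of $6r_0^2-4r_0+2\beta+1=0$ are real and hence ``incompatible with $\operatorname{Re} r_0(\tau)=-p(\tau)<0$''. That fails for $\beta<-\tfrac12$: the smaller real root $\tfrac13-\tfrac{1}{12}\sqrt{-8-48\beta}$ is then negative, so negativity of $\operatorname{Re} r_0$ does not rule it out. The correct exclusion in the real-root case is the same one you used for $n_2$: since $\operatorname{Im} r_0(\tau)=q(\tau)\geq c|\beta|^{1/2}>0$ by \eqref{estp}, $r_0(\tau)$ is never real, so real zeros of the quadratic are never attained. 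With that one-line fix your argument goes through.

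Finally, your observation about $a_{32}$ is well taken: since $a_{32}=n_2=1-2\beta-4r_0^2$, one only gets $|a_{32}|\lesssim\langle\tau\rangle^{2/3}$, not $\langle\tau\rangle^{1/3}$, but this is exactly what is used downstream in the estimate $|a_{32}/n_1|\lesssim 1$ of Lemma \ref{est_fgh}.
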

\begin{proof}
The estimates of each $a_{ij}$ follows directly from the explict formulas given by \eqref{n1} and the estimates for $r_0,$ $r_1$ and $r_2$ given in \eqref{est1}.  The proof of $n_i$ term follows from  the explicit formulas for $r_0, r_1$ and $r_2$ we have that $\operatorname{Re} n_1(\tau)=(2 p(\tau)+1)^2$. Then the estimate \eqref{estp} proves the lemma.
\end{proof}

The next step is to show that the unknown functions $f$ and $g$ are in $H^{\frac{s+1}{3}}(\mathbb{R})$ and $h$ are contained in $H^{\frac{s}{3}}(\mathbb{R})$. To do this we use the decay of functions $r_0, r_1$ and $r_2$. By using Lemma \ref{tec3} we have the following lemma.

\begin{lemma}\label{est_fgh} Assume that $\tilde u$ and $\tilde v$ are in $C\left([0, T] ; H^s(\mathbb{R})\right)$, then for the functions $f,$ $g,$ and $h$ given by the formula \eqref{fgh} hold the following estimate
\begin{equation*}
	\begin{split}
	&\|f\|_{H^{\frac{s+1}{3}}(\mathbb{R}+)}+\|h\|_{H^{\frac{s+1}{3}}(\mathbb{R}^+)}+\|g\|_{H^{\frac{s}{3}}\left(\mathbb{R}^{+}\right)} \\
	&\quad \leq\left\|F_1(0, t)\right\|_{H^{\frac{s+1}{3}}(\mathbb{R}^+)}+\left\|F_2(0, t)\right\|_{H \frac{s+1}{3}(\mathbb{R}+)}+\left\|\partial_x F_1(0, t)\right\|_{H^{\frac{s}{3}}(\mathbb{R}+)}+\left\|\partial_x F_2(0, t)\right\|_{H \frac{s}{3}(\mathbb{R}+)}\\
&\quad \quad 
+\left\|\partial_x^2 F_1(0, t)\right\|_{H^{ \frac{s-1}{3}}(\mathbb{R}+)}+\left\|\partial_x^2 F_2(0, t)\right\|_{H ^{\frac{s-1}{3}}(\mathbb{R}^+)}.
\end{split}
\end{equation*}
\end{lemma}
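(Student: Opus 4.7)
The plan is to work in Fourier space on $\tau$ and reduce the lemma to a family of symbol (multiplier) estimates. Using the explicit expressions in \eqref{fgh}, each of $\hat h, \hat f, \hat g$ is a finite linear combination of one of the scalar multipliers
$$\frac{a_{1j}}{n_1 n_2},\; \frac{a_{2j}}{n_1 n_2},\; \frac{a_{3j}}{n_1}\qquad (j=1,2,3),$$
acting on one of the three symbols $\hat F_i(0,\tau)$, $\partial_x\hat F_i(0,\tau)$, $\partial_x^2\hat F_i(0,\tau)$ for $i=1,2$. By Plancherel, the claimed estimate follows once we show that for every such multiplier $m(\tau)$ and every such factor $G(\tau)$ we have
$$\langle\tau\rangle^{r}|m(\tau)||G(\tau)|\lesssim \langle\tau\rangle^{\rho}|G(\tau)|\in L^2_\tau,$$
where $r\in\{(s+1)/3,\, s/3\}$ is the Sobolev exponent on the left-hand side and $\rho\in\{(s+1)/3,\, s/3,\,(s-1)/3\}$ is the one associated with the particular factor $G$ that comes from $F_1$ or $F_2$ (cf.\ Lemma~\ref{group} and Lemma~\ref{duhamel}). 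Equivalently, one must verify $|m(\tau)|\lesssim\langle\tau\rangle^{\rho-r}$ for each pair.

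In the high-frequency regime $|\tau|>2$, these verifications are purely algebraic and use Lemma~\ref{tec3}: combining $|n_1|,|n_2|\gtrsim\langle\tau\rangle^{2/3}$ with the stated upper bounds, one checks for example that $|a_{11}/(n_1n_2)|\lesssim 1$, which matches the $(s+1)/3$ weight needed to estimate the $\hat h$-contribution of $F_1(0,\cdot)$; similarly, $|a_{12}/n_1|\lesssim\langle\tau\rangle^{-1/3}$ matches the jump $(s+1)/3\to s/3$ needed when the factor is $\partial_x\hat F_i(0,\tau)$, and $|a_{13}/n_1|\lesssim\langle\tau\rangle^{-2/3}$ matches the jump $(s+1)/3\to(s-1)/3$. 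The estimates for the $\hat f$ row are identical in weight, and the estimates for the $\hat g$ row (where the LHS exponent drops to $s/3$) are simultaneously weaker in the Sobolev loss direction but compensated because $|a_{31}|\lesssim\langle\tau\rangle$, giving $|a_{31}/n_1|\lesssim\langle\tau\rangle^{1/3}$, which is exactly what is needed to absorb the $(s+1)/3$ bound on $F_i(0,\cdot)$ down to an $s/3$ bound on $g$. Each of the nine cases is a routine bookkeeping of exponents, so I would display them in a short table rather than prose.

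For the low-frequency regime $|\tau|\leq 2$ I would argue by continuity. Section~4 already shows $\det M(\tau)\neq 0$ for all $\tau$; in particular $\operatorname{Re} n_1(\tau)=(2p(\tau)+1)^2$, and \eqref{estp} gives $p(0)>0$ (since $\beta<0$), so $n_1$ is bounded away from $0$ on $[-2,2]$. An analogous direct inspection of $n_2=1-2r_0^2-2r_1r_2$ using the explicit formulas \eqref{root} for $r_0,r_1,r_2$ shows that $n_2$ also does not vanish on $[-2,2]$; hence all multipliers $a_{ij}/n_1$ and $a_{ij}/(n_1 n_2)$ are continuous and bounded on $|\tau|\leq 2$, and the low-frequency pieces of $\hat h,\hat f,\hat g$ are controlled in $L^2_\tau$ by the corresponding low-frequency pieces of the factors, which in turn are dominated by any of the trace norms on the right-hand side. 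Combining the two frequency regimes and using $\|\,\cdot\,\|_{H^r(\R^+)}\leq \|\,\cdot\,\|_{H^r(\R)}$ completes the argument.

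The main obstacle is the low-frequency step: ensuring the non-vanishing of $n_2$ uniformly on $[-2,2]$, which requires tracing through the asymptotics of $r_1 r_2$ near $\tau=0$. Once this is in hand, the whole argument reduces to the symbol computations of Lemma~\ref{tec3}. It is worth noting that the restriction $s\geq 1$ enters precisely so that the exponent $(s-1)/3$ appearing with $\partial_x^2 F_i(0,\cdot)$ is nonnegative; this keeps all the right-hand-side terms honestly in standard Sobolev spaces rather than distributional duals, which is what makes the multiplier bookkeeping sharp in Lemma~\ref{tec3}.
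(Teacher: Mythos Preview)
Your approach is essentially the same as the paper's: the paper's proof consists precisely of listing the nine multiplier bounds
\[
\left|\tfrac{a_{11}}{n_1 n_2}\right| \lesssim 1,\quad \left|\tfrac{a_{12}}{n_1}\right| \lesssim\langle\tau\rangle^{-1/3},\quad \left|\tfrac{a_{13}}{n_1}\right| \lesssim\langle\tau\rangle^{-2/3},\quad\ldots
\]
and declaring them a direct consequence of Lemma~\ref{tec3}. One small slip in your write-up: only the first-column entries of $M^{-1}$ carry the extra factor $1/n_2$ (see \eqref{inverse}), so the relevant multipliers are $a_{11}/(n_1n_2),\,a_{12}/n_1,\,a_{13}/n_1$ in the $\hat h$-row, and analogously for $\hat f$; your displayed bounds already reflect this correctly, so this is purely notational. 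Your low-frequency discussion is more careful than the paper's, which simply absorbs $|\tau|\le 2$ into the implicit constants without comment; your observation that one should argue via continuity of the entries of $M^{-1}$ (guaranteed by $\det M\neq 0$ from Section~\ref{4}) is the clean way to handle this and avoids having to separately verify non-vanishing of $n_2$.
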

\begin{proof} The proof follows from the estimates
$$
\begin{array}{r}
\left|\frac{a_{11}}{n_1 n_2}\right| \lesssim 1,\left|\frac{a_{12}}{n_1}\right| \lesssim\langle\tau\rangle^{-\frac{1}{3}},\left|\frac{a_{13}}{n_1}\right| \lesssim\langle\tau\rangle^{-\frac{2}{3}} \\
\left|\frac{a_{21}}{n_1 n_2}\right| \lesssim\langle\tau\rangle^{-1},\left|\frac{a_{22}}{n_1}\right| \lesssim\langle\tau\rangle^{-\frac{1}{3}},\left|\frac{a_{13}}{n_1}\right| \lesssim\langle\tau\rangle^{-\frac{2}{3}} \\
\left|\frac{a_{31}}{n_1}\right| \lesssim\langle\tau\rangle^{1 / 3},\left|\frac{a_{32}}{n_1}\right| \lesssim 1,\left|\frac{a_{33}}{n_1}\right| \lesssim(\tau)^{-\frac{2}{3}},
\end{array}
$$
which is a direct consequence of Lemma \ref{tec3}.\end{proof}
Finally, as the consequence of Lemmas \ref{group} and \ref{duhamel}, we have the following estimate for the trace functions:
\begin{equation}\label{tec4}
\begin{split}
\left\|F_1(0, t)\right\|_{H^{ \frac{s+1}{3}}(\mathbb{R}+)}+\left\|\partial_x F_1(0, t)\right\|_{H^{\frac{s}{3}}(\mathbb{R}+)}+\left\|\partial_x^2 F_1(0, t)\right\|_{H^{\frac{s-1}{3}}(\mathbb{R}+)} \lesssim\left\|u_0\right\|_{H^s(\mathbb{R}+)}+\|u\|_{X_s^1}^2;\\
\left\|F_2(0, t)\right\|_{H^{ \frac{s+1}{3}}(\mathbb{R}+)}+\left\|\partial_x F_2(0, t)\right\|_{H^{\frac{s}{3}}(\mathbb{R}^+)}+\left\|\partial_x^2 F_2(0, t)\right\|_{H^{\frac{s-1}{3}}(\mathbb{R}+)} \lesssim\left\|v_0\right\|_{H^s(\mathbb{R}^-)}+\|v\|_{X_s^2}^2.
\end{split}
\end{equation}

\subsection{Proof of Theorem \ref{theorem1} } 
\begin{proof}
The proof will be based in the Banach's Fixed Point Theorem. By convenience in the exposition, we will consider $|\bold E_{-}|=|\bold E_{+}|=1$. Let $\bold{u}_0=(u_0, v_0) \in H^s(\mathcal G)\cap \mathcal N_{0, Z}(s)$ with $\tilde{\bold{u}}_0=(\tilde{u}_0, \tilde{v}_0) \in H^s(\mathbb R)\times H^s(\mathbb R) $ such that  $\tilde{u}_0|_{\mathbb{R}^+}= u_0$, $\tilde{v}_0|_{\mathbb R^-}=v_0$,
and $\|\tilde{u}_0\|_{ H^s(\mathbb R)}\leq c \|{u}_0\|_{ H^s(\mathbb R^+)}$, $\|\tilde{v}_0\|_{ H^s(\mathbb R)}\leq c \|{v}_0\|_{ H^s(\mathbb R^-)}$. We consider the Banach space  $X_s^1 (\mathbb R^2)$
\begin{equation}\label{tec001}
X_s^1(\mathbb R^2)=\left\{w: \mathbb R^ 2\to \mathbb R: \|w\|_{X_s^1}\equiv
N(w)<\infty\right\},
\end{equation}
where 
$$
N(w)=\text{max}\Big\{\|w\|_{C\left(\mathbb{R}_t ; H^s\left(\mathbb{R}_x\right)\right)}, \|w\|_{C\left(\mathbb{R}_x ; H^{\frac{s+1}{3}}\left(\mathbb{R}_t\right)\right)}, \|w\|_{X^{s, b, \beta, \sigma}},  \|w_x\|_{C\left(\mathbb{R}_x ; H^{\frac{s}{3}}\left(\mathbb{R}_t\right)\right)},  \|w_{xx}\|_{C\left(\mathbb{R}_x ; H^{\frac{s-1}{3}}\left(\mathbb{R}_t\right)\right)}\Big\}
$$
For $X_s=X_s^1 (\mathbb R^2) \times X_s^1(\mathbb R^2)$ and $\bold{u}=(\tilde{u}, \tilde{v})\in X_s$ we will consider $\|\bold{u}\|_{X_s}=\|\tilde{u}\|_{X_s^1}+\| \tilde{v}\|_{X_s^1}$. Next,  we define $\Lambda_{\tilde{\bold{u}}_0}: X_s\to X_s$ as
$$
\Lambda_{\tilde{\bold{u}}_0}(\bold{u})=\left(\Lambda_1(\tilde{u}), \Lambda_2(\tilde{v})\right)
$$
with
\begin{equation}\label{aux1}
\Lambda_1(\tilde{u} )(x, t)=\psi(t) S_\beta(t) \tilde{u}_0+\psi(t) \mathcal{K}\left(\psi_T^ 2(t)\tilde{u}\partial_x \tilde{u}\right)+\psi(t)\mathcal R_\beta h(x, t)
\end{equation}
and
\begin{equation}\label{aux2} \Lambda_2 \tilde{v}(x, t)=\psi(t) S_\beta(t) \tilde{v}_0+\psi(t) \mathcal{K}\left(\psi_T^ 2(t)\tilde v\partial_x\tilde{v}\right)+\psi(t)\mathcal L_\beta^1 f(x, t)+\psi(t)\mathcal L_\beta^2 g(x, t),
\end{equation}
with $R_\beta,\  L^j_\beta$ defined in \eqref{R}-\eqref{L1}-\eqref{L2}, and  $h=h(t),\ f=f(t),\ g=g(t)$ are given via the Fourier transform with regard to $t$ by the formulas in \eqref{fgh}, with $\hat{F_j}$ representing  the Fourier transform with regard to $t$ of the functions $F_j=F_j(x,t)$, $i=1,2$, given by the formulas \eqref{F1} and \eqref{F2}.

\begin{remark}
Here, we consider the extended version on all $\mathbb R^2$ of the functions $\mathcal R_{\beta} h$, $\mathcal L_{\beta}^{1}f$ and $\mathcal L_{\beta}^2g$ given in Remark \ref{ext1}. Also, we note that the operator $\Lambda$ depends of the extensions $\widetilde{u}_0$ and $\widetilde {v}_0$, but by Remark \ref{uni} we see that the restriction of the function $\Lambda \bold u\mid_ {\mathcal G \times (0,T)}=\left(\Lambda_1(\tilde{u})\mid_{\R^+\times(0,T)}, \Lambda_2(\tilde{v})\mid_{\R-\times (0,T)}\right)$ does not depend  of these extensions. Moreover, the extension of the initial data $\bold{u}_0$ is necessary to apply the keys to strategy of Bourgain \cite{Bourgain} on whole the line and of  Faminskii \cite{Faminskii,Faminskii2} on half-line, and  it do not bring problems of uniqueness of solutions.  
\end{remark}

Next, we show that there is a $\gamma >0$ such that for $\bold{u}\in \bar{B}_\gamma (\bold{0})=\{\bold{w}\in X_s: \|\bold{w}\|_{X_s}\leq \gamma\}$, we have $\Lambda_{\tilde{\bold{u}}_0}(\bold{u}) \in B_\gamma (\bold{0})$ and $\Lambda_{\tilde{\bold{u}}_0}: \bar{B}_\gamma (\bold{0})\to \bar{B}_\gamma (\bold{0})$ is a contraction.

 Firstly, we start by estimating  $\|\Lambda_1(\tilde{u})\|_{X_s^1}$. We have from definition of $F_i$ at \eqref{F1} and \eqref{F2}, Lemmas \ref{tec3}-\ref{est_fgh}, and Lemmas \ref{group}, \ref{duhamel}, \ref{psiT} and \ref{bilinear}  that for $h$ in \eqref{fgh}
\begin{equation}\label{0estima1}
 \begin{array}{ll}
&\|h\|_{H^{\frac{s+1}{3}}(\mathbb R_t)}\leq  D_1 \sum_{j=0}^2 \|\partial^j_x F_i(0, \tau)\|_{H^{\frac{s+1-j}{3}}(\mathbb R_t)}\leq D_1 \sum_{j=0}^2 \|\partial^j_x F_i(x, \tau)\|_{C(\mathbb R_x: H^{\frac{s+1-j}{3}}(\mathbb R_t))}\\
\\
&\quad\leq  C_0D_1(\|\tilde{u}_0\|_{H^s(\mathbb R)}+ \|\tilde{v}_0\|_{H^s(\mathbb R)})  + D_2 (T^r)^2(\|\tilde{u}\|^2_{X^{s, b,\beta, \sigma}}+\|\tilde{v}\|^2_{X^{s, b,\beta, \sigma}})\\
\\
& \quad\leq C_3\|\bold{{u}}_0\|_{H^s(\mathcal G)}+ D_3 (T^r)^2\|\bold{u}\|_{X_s}^2,
\end{array}
\end{equation}
with $D_i$, $C_0$ and $C_3$ are generic positive constants depending of $Z$ and $s$ which appear in Lemma \eqref{fgh} and \eqref{tec4}, and the power $r>0$ of $T$ is obtained in Lemma \ref{psiT} or Remark \ref{power}. Thus, from Lemmas \eqref{group}, \eqref{duhamel}, \eqref{psiT}, Lemma 3.3 and \eqref{0estima1} we get,
\begin{equation}\label{1estima2}
 \begin{array}{ll}
&\|\psi(t) S_\beta(t) \tilde{u}_0 + \psi(t) \mathcal K(\psi^2_T\partial_x(\tilde{u}\tilde{u})) +\psi(t) R_\beta h\|_{C(\mathbb R_t;H^s (\mathbb R_x))}\\
\\
&\quad\quad\leq C_0\|\tilde{u}_0\|_{H^s(\mathbb R)} +
C_1 \| \psi^2_T\partial_x(\tilde{u}\tilde{u}))\|_{Y^{s, b, \beta, \sigma}} + C_2\|h\|_{H^{\frac{s+1}{3}}(\mathbb R_t)}\\
&\quad\quad\leq  C_3\|\bold{{u}}_0\|_{H^s(\mathcal G)}+D_4 (T^r)^2\|\bold{u}\|^2_{X_s}.
\end{array}
\end{equation}
Similarly, we have
\begin{equation}\label{2estima3}
 \begin{array}{ll}
&\sum_{j=0}^2\| \psi(t) \partial_x^j S_\beta(t) \tilde{u}_0 +\psi(t)\partial_x^j \mathcal K(\psi^2_T\partial_x(\tilde{u}\tilde{u})) +\psi(t) \partial_x^j R_\beta h\|_{C(\mathbb R_x;H^{\frac{s+1-j}{3}} (\mathbb R_t))}\\
\\
&\quad \leq C_0\|\tilde{u}_0\|_{H^s(\mathbb R)} +C_1 \| \psi^2_T\partial_x(\tilde{u}\tilde{u}))\|_{Y^{s, b, \beta, \sigma}} + C_2\|h\|_{H^{\frac{s+1}{3}}(\mathbb R_t)}\\
&\quad\leq  C_3\|\bold{{u}}_0\|_{H^s(\mathcal G)}+D_4 (T^r)^2\|\bold{u}\|^2_{X_s}
\end{array}
\end{equation}
and  
\begin{equation}\label{3estima4}
 \begin{array}{ll}
&\| \psi(t) S_\beta(t) \tilde{u}_0 +\psi(t)\mathcal K(\psi^2_T\partial_x(\tilde{u}\tilde{u})) +\psi(t)R_\beta h\|_{X^{s, b,\beta, \sigma}}\\
\\
& \quad \leq C_0\|\tilde{u}_0\|_{H^s(\mathbb R)} +C_1 \| \psi^2_T\partial_x(\tilde{u}\tilde{u}))\|_{Y^{s, b, \beta, \sigma}} + C_2\|h\|_{H^{\frac{s+1}{3}}(\mathbb R_t)}\\
&\quad\leq    C_3\|\bold{{u}}_0\|_{H^s(\mathcal G)}+D_4 (T^r)^2\|\bold{u}\|^2_{X_s}.
\end{array}
\end{equation}
Therefore, from \eqref{1estima2}-\eqref{2estima3}-\eqref{3estima4} follows
$$
\|\Lambda_1(\tilde{u})\|_{X_s^1}\leq   C_3\|\bold{{u}}_0\|_{H^s(\mathcal G)} +D_4 (T^r)^2\|\bold{u}\|^2_{X_s}.
$$
Similarly, we obtain $\|\Lambda_2(\tilde{v})\|_{X_s^1}\leq  C_3\|\bold{{u}}_0\|_{H^s(\mathcal G)} +D_4 (T^r)^2\|\bold{u}\|^2_{X_s}$. Then,
$$
\|\Lambda_{\tilde{\bold{u}}_0} (\bold u)\|_{X_s}\leq C_3\|\bold u_0\|_{H^s(\mathcal G)} +2D_4(T^r)^2\|\bold u\|_{X_s}^2. 
$$
Thus, by choosing first $\gamma= 2 C_3\|\bold u_0\|_{H^s(\mathcal G)}$ and then $T$ such that $2D_4(T^r)^2\gamma<\frac{1}{16}$, we obtain $\|\Lambda_{\tilde{\bold{u}}_0}(\bold u)\|_{X_s}\leq \gamma$. Therefore, $\Lambda_{\tilde{\bold{u}}_0} ( \bar{B}_\gamma (\bold{0}))\subset  \bar{B}_\gamma (\bold{0})$.

Now, to show that $\Lambda_{\tilde{\bold{u}}_0} $ is a contraction on $\bar{B}_\gamma (\bold{0})$, we argue as above and we  get for $ \bold u, \bold w\in  \bar{B}_\gamma (\bold{0})$
\begin{equation}\label{contraction}
\|\Lambda _{\tilde{\bold{u}}_0} (\bold u)-\Lambda_{\tilde{\bold{u}}_0} (\bold w)\|_{X_s}\leq 8\gamma D_4(T^r)^2 \|\bold u-\bold w\|_{X_s}<\frac14 \|\bold u-\bold w\|_{X_s}.
\end{equation}
By convenience of the reader, we show the former estimate in the case of  $\|\Lambda_1 (\tilde u)-\Lambda_1 ( w_1)\|_{C(\mathbb R_t;H^s (\mathbb R_x))}$ with $\bold w=(w_1,w_2)$. Indeed, initially we have
\begin{equation}\label{4estima5}
 \begin{array}{ll}
&\|\psi(t) \mathcal K(\psi^2_T\partial_x(\tilde{u}^2-{w_1}^2))\|_{C(\mathbb R_t;H^s (\mathbb R_x))} \leq
C_1 \| \psi^2_T\partial_x[(\tilde{u}-w_1)(\tilde{u}+w_1)]\|_{Y^{s, b, \beta, \sigma}} \\
\\
&\quad\leq  D_4 (T^r)^2\|\tilde{u}-w_1\|_{X^s} \|\tilde{u}+w_1\|_{X^s}\leq 2\gamma D_4 (T^r)^2\|\bold{u}-\bold w\|_{X_s},
\end{array}
\end{equation}
and 
\begin{equation*}
\|\psi(t) R_\beta (h-j)\|_{C(\mathbb R_t;H^s (\mathbb R_x))} \leq
C_2 \| h-j\|_{H^{\frac{s+1}{3}}(\mathbb R_t)} 
\end{equation*}
with $j$ defined similarly as $h$ in \eqref{fgh} with $F_1$ and $F_2$ changed, respectively, by $$
F_3(\tilde u_0, w_1,x,t)=\psi(t)S_{\beta}(t)\tilde u_0+\psi(t)\mathcal{K}(\psi^2_T\partial_x({w_1})^2)$$  and 
$$F_4(\tilde u_0, w_2,x,t)=\psi(t)S_{\beta}(t)\tilde v_0+\psi(t)\mathcal{K}(\psi^2_T\partial_x({w_2})^2).$$ Thus, by following a similar argument
as in \eqref{0estima1} and \eqref{4estima5} (with the same generic constants) we obtain for $\mathcal Z= C(\mathbb R_x;H^{\frac{s+1-j}{3}} (\mathbb R_t))$
\begin{equation}\label{5estima6}
 \begin{array}{ll}
&\|h-j\|_{H^{\frac{s+1}{3}}(\mathbb R_t)}\leq  D_1 \sum_{j=0}^2 \|\psi(t)\partial^j_x \mathcal K(\psi^2_T\partial_x(\tilde{u}^2-{w_1}^2))\|_{\mathcal Z} +  D_1 \sum_{j=0}^2 \|\psi(t)\partial^j_x \mathcal K(\psi^2_T\partial_x(\tilde{v}^2-{w_2}^2))\|_{\mathcal Z}
\\
\\
&\leq  D_3 (T^r)^2\|\tilde{u}-w_1\|_{X^s} \|\tilde{u}+w_1\|_{X^s} + D_3 (T^r)^2\|\tilde{v}-w_2\|_{X^s} \|\tilde{v}+w_2\|_{X^s}  \leq 2\gamma D_3 (T^r)^2\|\bold{u}-\bold w\|_{X_s}.
\end{array}
\end{equation} 
Therefore,
$$
\|\Lambda_1 (\tilde u)-\Lambda_1 ( w_1)\|_{C(\mathbb R_t;H^s (\mathbb R_x))}\leq 4\gamma D_4 (T^r)^2\|\bold{u}-\bold w\|_{X_s}.
$$
Thus, by \eqref{contraction} there is a unique $\bold{u}\in \bar{B}_\gamma (\bold{0})\subset X_s$ such that $\Lambda_{\tilde{\bold{u}}_0}(\bold{u})=\bold{u}$. Hence,  as the linear  restriction-mapping 
$$
 \begin{array}{ll}
\mathrm R: X_s&\to X_s(T)\\
\hskip0.3in\bold{w}& \mapsto \bold{w}|_{\mathcal G \times [0,T]}
\end{array}
$$
satisfies $\|\mathrm R \bold{w}\|_{X_s(T)}\leq 2 \|\bold{w}\|_{X_s}$, we obtain that 
\begin{equation}\label{solution}
(u_{\bold{e}})_{\bold{e}\in \bold E}=\bold{u}|_{\mathcal G\times (0,T)}\in X_s(T)
\end{equation}
 with $T<1$,  represents  a strong solution of \eqref{kdv3} and satisfies \eqref{Zcondition}, moreover,  $
 (u_{\bold{e}}(\cdot, 0))_{\bold{e}\in \bold E}=\bold{u}_0$.   Similarly, using the argument leading to \eqref{contraction},  we obtain for $T_0\in (0, T)$ that
\begin{equation}\label{LIP}
\|\Lambda _{\tilde{\bold{u}}_0} (\bold u)-\Lambda_{\tilde{\bold{p}}_0} (\bold p )\|_{X_s}\leq C_3\| \tilde{\bold{u}}_0- \tilde{\bold{p}}_0\|_{H^ s(\mathbb R)\times H^ s(\mathbb R)}+ 8\gamma D_4(T_0^r)^ 2\|\bold u-\bold p\|_{X_s}.
\end{equation}
Thus, for $U_0=B_\delta(\tilde{\bold{u}}_0)\subset H^ s(\mathbb R)\times H^ s(\mathbb R)$ with $\delta>0$ such  that $C_3\delta+8\gamma D_4 (T_0^r)^2<\frac{1}{4}$, we get  for  $\tilde{\bold{p}}_0 \in U_0$ that $\bold p$ is a fixed point for $\Lambda_{\tilde{\bold{p}}_0} $,  and so  the map $\tilde{\bold{p}}_0\in U_0\mapsto \bold p\in X_s$ is Lipschitz. We also note that by using standard arguments, we can obtain uniqueness of $\bold{u}$ in the class $X_s$ (see Kenig, Ponce and Vega \cite{KPV}).

Next, let $T_0\in (0, T)$ and we consider for $\delta_0>0$ (to be chosen)  the open ball $W_0=B_{\delta_0}(\bold{u}_0)\cap \mathcal N_{0, Z}(s)\subset H^s(\mathcal G)$. For $\bold{p}_0\in W_0$ we consider a extension $\tilde{\bold{p}}_0\in  H^ s(\mathbb R)\times H^ s(\mathbb R)$ of $\bold{p}_0$  such that $\| \tilde{\bold{u}}_0- \tilde{\bold{p}}_0\|_{H^ s(\mathbb R)\times H^ s(\mathbb R)}\leq 2 \| \bold{u}_0- \bold{p}_0\|_{H^s(\mathcal G)}$. Therefore, by choosing $\delta_0$ such that $2\delta_0<\delta$ we obtain $\tilde{\bold{p}}_0\in  U_0$ and for $\bold{q}\in X_s$ such that $\mathrm R \bold{p}\in X_s(T_0)$ is the solution of \eqref{kdv3} satisfying \eqref{Zcondition} and  $\mathrm R \bold{p}(\cdot, 0)=\bold{p}_0$, we have from \eqref{LIP}
$$
\|\mathrm R \bold{p}- \mathrm R \bold{u} \|_{X_s(T_0)}\leq 2 \|\bold{p}-\bold{u}\|_{X_s}\leq 2C_3 \| \tilde{\bold{u}}_0- \tilde{\bold{p}}_0\|_{H^ s(\mathbb R)\times H^ s(\mathbb R)}\leq 4C_3 \| \bold{u}_0- \bold{p}_0\|_{H^s(\mathcal G)}.
$$
This shows that the mapping data-solution $\bold{p}_0\in W_0 \mapsto \mathcal R \bold{p}\in X_s(T_0)$ is Lipschitz. This finishes the proof.
\end{proof} 
\begin{remark}
	As the consequence of our proof we have an exact formula for the  group associated to the Airy operator $ {A_Z}$. More precisely, we can define $\bold S(t):D(A_Z)\subset H^3(\mathcal G)\rightarrow L^2(\mathcal G)$ by 
	\begin{equation}\label{grupo}
		\bold S(t)\bold w_0=\left ((S_{\beta}(t)\tilde u_0+R_{\beta}h(x,t))\bigg|_{\mathbb R ^-\times (0,T)},( S_{\beta}(t)\tilde v_0+L_{\beta}^1f(x,t)+L_{\beta}^2g(x,t))\bigg|_{\mathbb R ^+\times (0,T)}\right),
	\end{equation}
	where $\bold w_0=(u_0,v_0)$ and 
	\begin{equation}\label{fgh2}
		\left\{\begin{array}{c}
			\hat{h}(\tau)=\frac{1}{n_1}\left(\frac { a _ { 1 1 } } { n _ { 2 } } \left(\hat{F}_1(0, \tau)-\hat{F}_2(0, \tau)+a_{12}\left(-\partial_x \hat{F}_1(0, \tau)+\partial_x \hat{F}_2(0, \tau)\right)\right.\right. \\
			\left.\quad+a_{13}\left(-\partial_x^2 \hat{F}_1(0, \tau)+\partial_x^2 \hat{F}_1(0, \tau)+\frac{Z^2}{2} \hat{F}_2(0, \tau)+Z \partial_x \hat{F}_2(0, \tau)\right)\right) \\
			\hat{f}(\tau)=\frac{1}{n_1}\left(\frac { a _ { 2 1 } } { n _ { 2 } } \left(\hat{F}_1(0, \tau)-\hat{F}_2(0, \tau)+a_{22}\left(-\partial_x \hat{F}_1(0, \tau)+\partial_x \hat{F}_2(0, \tau)\right)\right.\right. \\
			\left.\quad+a_{23}\left(-\partial_x^2 \hat{F}_1(0, \tau)+\partial_x^2 \hat{F}_1(0, \tau)+\frac{Z^2}{2} \hat{F}_2(0, \tau)+Z \partial_x \hat{F}_2(0, \tau)\right)\right) \\
			\hat{g}(\tau)=\frac{1}{n_1}\left(a _ { 3 1 } \left(\hat{F}_1(0, \tau)-\hat{F}_2(0, \tau)+a_{32}\left(-\partial_x \hat{F}_1(0, \tau)+\partial_x \hat{F}_2(0, \tau)\right)\right.\right. \\
			\left.\quad+a_{33}\left(-\partial_x^2 \hat{F}_1(0, \tau)+\partial_x^2 \hat{F}_1(0, \tau)+\frac{Z^2}{2} \hat{F}_2(0, \tau)+Z \partial_x \hat{F}_2(0, \tau)\right)\right),
		\end{array}\right.
	\medskip
	\end{equation}
where, the functions $F_1$ and $F_2$ are given by
	$$F_1(\tilde u_0,x,t)=S_{\beta}(t)\tilde u_0$$ and $$F_2(\tilde v_0, x,t)=S_{\beta}(t)\tilde v_0.$$
	\end{remark}
\begin{remark}
{Since the Airy operator $A_Z$ is skew self adjoint  on $D( A_Z)$ follows from the theory of semigroups that $A_Z$ is  the generator of a unique unitary group $\{\bold S(t)\}$ on $L^2(\mathcal G)$. Then, we have that  $\bold S(t)$ defined in \eqref{grupo} is the only group associated to Airy operator $A_Z$.}

Also, combining \eqref{aux1} and \eqref{aux2}  with \eqref{grupo} we point out that it is possible to rewrite the Duhamel  integral formula associated to \eqref{kdv3}, for $t\in [0,T]$ as the following
\begin{equation}\label{Duhamel}
	\bold u(t)=\bold S(t)\bold u_0+\int_0^t\bold S(t-t')(\bold u\bold u_x)dt' ,
	\end{equation}
where we denote $\bold u:=(u,v)$.
\end{remark}

\subsection{Proof of Theorem \ref{dependence} } 

\begin{proof} The proof will be based in the Implicit Function Theorem and our estimates used in the proof of Theorem \ref{theorem1}. By convenience in the exposition, we will consider $|\bold E_{-}|=|\bold E_{+}|=1$. Let $\bold{u}_0=(u_0, v_0) \in H^1(\mathcal G)\cap \mathcal N_{0, Z}(1)$, and $\bold{u}\in X_1(T)$, $T=T(\|\bold{u}_0 \|_1)$, the unique solution of the IVP \eqref{kdv3}-\eqref{Zcondition}, on the class $X_s(T)$, given by Theorem \ref{theorem1} with initial data $\bold{u}_0$. Moreover, for $\bold{u}=(u, v)=\left(\left.\tilde{u}\right|_{\mathbb{R}^{+}},\left.\tilde{v}\right|_{\mathbb{R}^{-}}\right)$, where $\tilde{\bold{u}}=(\tilde{u}, \tilde{v})$ is the fixed point obtained for $\Lambda=(\Lambda_1, \Lambda_2)$ on $X_1$ and $\Lambda_i$ determined by \eqref{aux1}-\eqref{aux2}.

Next,  let $\mathcal W\subset H^1(\mathbb R)\times H^1(\mathbb R) $ be a neighborhood of the extension $\tilde{\bold{u}}_0=(\tilde{u}_0, \tilde{v}_0)$ of $\bold{u}_0$ such that  $\tilde{u}_0|_{\mathbb R^+}= u_0$, $\tilde{v}_0|_{\mathbb R^+}=v_0$,
and $\|\tilde{u}_0\|_{ H^1(\mathbb R)}\leq c \|{u}_0\|_{ H^1(\mathbb R^+)}$, $\|\tilde{v}_0\|_{ H^1(\mathbb R)}\leq c \|{v}_0\|_{ H^1(\mathbb R^-)}$.  For $X_1=X_1^1(\mathbb R^2) \times X_1^1(\mathbb R^2)$ with $ X_1^1(\mathbb R^2)$ determined in \eqref{tec001},  we define the mapping  
$$
\mathcal H: \mathcal W\times X_1\to X_1
$$
for $(\bold{w}_0, \bold{w})\in \mathcal W\times X_1$ as 
$$
\mathcal H(\bold{w}_0, \bold{w})(t)\equiv \bold{w}(t)-\Big(\psi(t)\bold S_\beta(t){\bold w}_0+ \psi(t)D(\bold{w})\Big),
$$
where for $\bold{w}_0=(\phi_0, \psi_0)$ we have
\begin{equation}\label{grupo2}
\bold S_\beta(t)\bold w_0=\Big (S_\beta(t){\phi}_0, S_\beta(t){\psi}_0\Big),
\end{equation}
and $D(\bold{w})$ is defined for $\bold{w}=(w_1, w_2) \in X_1$ by 
\begin{equation}\label{duha2}
D({\bold{w}})=\Big (\mathcal K(\psi^2_T(t){w}_1\partial_x{w}_1)+ R_\beta H , \mathcal K(\psi^2_T(t){w}_2\partial_x {w}_2)+  L^1_\beta F +  L^2_\beta G\Big)
\end{equation}
with $ R_\beta,  L^j_\beta$ defined in \eqref{R}-\eqref{L1}-\eqref{L2}, with $H=H(t), F=F(t), G=G(t)$ given via the Fourier transform with regard to $t$ by the formulas in \eqref{fgh}, with $\hat{F_j}$ representing  the Fourier transform with regard to $t$ of the function $F_j=F_j(x,t)$, $i=1,2$, determined for $(x,t)\in \mathbb R$ by
\begin{equation}\label{Fj}
\left\{\begin{array}{cc}
F_1(x,t)\equiv F_1({\phi}_0, {w}_1, x,t)&=\psi(t)S_\beta(t){\phi}_0 (x)+ \psi(t)\mathcal K(\psi^2_T{w}_1\partial_x{w}_1)(x,t),\\
F_2(x,t)\equiv F_2({\psi}_0, {w}_2, x,t)&=\psi(t)S_\beta(t){\psi}_0(x) +\psi(t)\mathcal K(\psi^2_T{w}_2\partial_x{w}_2)(x,t).
\end{array}\right.
\end{equation}
Thus, by the analysis in the proof of Theorem \ref{theorem1} we get  for the fixed point $\tilde {\bold{u}}=(\tilde {{u}}, \tilde {{v}})$ of $\Lambda$ that
$\mathcal H(\tilde{\bold{u}}_0, \tilde {\bold{u}})=\bold {0}$.

Next, we show that the linear application $D_2 \mathcal H(\tilde{\bold{u}}_0, \tilde{\bold{u}})\equiv \partial_\bold{w} \mathcal H(\tilde{\bold{u}_0}, \tilde{\bold{u}})$ is invertible. Initially, we determined one formula for 
$$
D_2 \mathcal H(\tilde{\bold{u}}_0, \tilde{\bold{u}})(\bold{w})=\frac{d}{d\epsilon} \mathcal H(\tilde{\bold{u}}_0, \tilde{\bold{u}}+\epsilon \bold{w})|_{\epsilon=0}
$$
with $\bold{w}\in X_1$. Thus, we get
\begin{equation}\label{1deriva}
D_2 \mathcal H(\tilde{\bold{u}}_0, \tilde{\bold{u}})(\bold{w})= \bold{w}- \psi (t)\frac{d}{d\epsilon} D(\tilde{\bold{u}}+\epsilon {\bold{w}})|_{\epsilon=0}.
\end{equation}
Next, by \eqref{duha2} we get the following first relation for $\bold{w}=(w_1, w_2)$
\begin{equation}\label{deriK}
\frac{d}{d\epsilon} 
 (\frac12 \mathcal K( \psi^2_T\partial_x(\tilde{u}+\epsilon {w}_1)^2), \frac12 \mathcal K(\psi^2_T \partial_x(\tilde{v}+\epsilon {w}_2)^2)|_{\epsilon=0}= (\mathcal K(\psi^2_T\partial_x(\tilde{u}{w}_1)), \mathcal K(\psi^2_T\partial_x(\tilde{v}{w}_2)).
\end{equation}
Now, by \eqref{Fj} we get for $F_{1, \epsilon} =F_1({\phi}_0, \tilde{u}+\epsilon {w}_1, x,t)$ and  $F_{2, \epsilon}=F_2({\psi}_0, \tilde{v}+\epsilon {w}_2, x,t)$ that
\begin{equation}\label{deriFj}
\left\{\begin{array}{cc}
\frac{d}{d\epsilon} \hat{F}_{1, \epsilon} (x, \tau)|_{\epsilon=0}&=[\psi (t)\mathcal K(\psi^2_T \partial_x(\tilde{u}{w}_1))]^{\wedge}(x, \tau)\equiv \widehat{M}_1(x, \tau)\\
\frac{d}{d\epsilon} \hat{F}_{2, \epsilon} (x, \tau)|_{\epsilon=0}&=[\psi (t)\mathcal K(\psi^2_T \partial_x(\tilde{v}{w}_2))]^{\wedge}(x, \tau)\equiv \widehat{M}_2(x, \tau)
\end{array}\right.
\end{equation}
Thus, we establish that
\begin{equation}\label{tracej}
\left\{\begin{array}{cc}
\widehat{M}_1(0, \tau)&=[\psi (t)\mathcal K(\psi^2_T\partial_x(\tilde{u}{w}_1))]^{\wedge}(0, \tau)\\
\widehat{M}_2(0, \tau)&= [\psi (t)\mathcal K(\psi^2_T\partial_x(\tilde{v}{w}_2))]^{\wedge}(0, \tau).
\end{array}\right.
\end{equation}
Similarly, we obtain from the notation $\partial_x^jJ(0, \tau)=\lim_{x\to 0} \partial_x^jJ (x, \tau)$, that for $j=1,2$, 
\begin{equation}\label{tracej}
\left\{\begin{array}{cc}
\widehat{\partial_x^j M}_1(0, \tau)&= \frac{d}{d\epsilon} \partial_x^j\hat{F}_{1, \epsilon} (0, \tau)|_{\epsilon=0}= [\psi (t)\partial_x^j \mathcal K(\psi^2_T\partial_x(\tilde{u}{w}_1))]^{\wedge}(0, \tau)\\
\widehat{\partial_x^j M}_2(0, \tau)&= \frac{d}{d\epsilon} \partial_x^j\hat{F}_{2, \epsilon} (0, \tau)|_{\epsilon=0}= [\psi (t)\partial_x^j \mathcal K(\psi^2_T\partial_x(\tilde{v}{w}_2))]^{\wedge}(0, \tau),
\end{array}\right.
\end{equation}
we recall that ``\;$\widehat{}$\; '' represents the Fourier transform with regard to the time. Therefore,  for $M=M(t)$ we obtain 
$$
\frac{d}{d\epsilon} R_\beta H |_{\epsilon=0}(x,t)= R_\beta M (x,t),
$$
 where 
 \begin{equation}\label{M}
 \begin{array}{cc}
 \widehat{M}(\tau)&=\frac{1}{n_1}\big(\frac { a _ { 1 1 } } { n _ { 2 } } \big( \widehat{M}_1(0, \tau)- \widehat{M}_2(0, \tau)+a_{12}\big(-\widehat{\partial_x M}_1(0, \tau)+\widehat{\partial_x M}_2(0, \tau)\big)\\
&+a_{13}\big(-\widehat{\partial_x^2 M}_1(0, \tau)+\widehat{\partial_x^2 M}_2(0, \tau)+\frac{Z^2}{2} \widehat{M}_2(0, \tau)+Z \widehat{\partial_x M}_2(0, \tau)\big)\big).
\end{array}
\end{equation}
Similarly, for $N=N(t)$ and  $P=P(t)$ we have
$$
\frac{d}{d\epsilon} \mathcal L^1_\beta F |_{\epsilon=0}(x,t)= \mathcal L^1_\beta N (x,t), \quad \frac{d}{d\epsilon} \mathcal L^2_\beta G |_{\epsilon=0}(x,t)= \mathcal L^2_\beta P (x,t)
$$
 where 
 \begin{equation}\label{N}
 \begin{array}{cc}
 \widehat{N}(\tau)&=\frac{1}{n_1}\big(\frac { a _ { 2 1 } } { n _ { 2 } } \big(\widehat{M}_1(0, \tau)-\widehat{M}_2(0, \tau)+a_{22}\big(-\widehat{\partial_x M}_1(0, \tau)+\widehat{\partial_x M}_2(0, \tau)\big)\\
&+a_{23}\big(-\widehat{\partial_x^2 M}_1(0, \tau)+\widehat{\partial_x^2 M}_2(0, \tau)+\frac{Z^2}{2} \widehat{M}_2(0, \tau)+Z \widehat{\partial_x M}_2(0, \tau)\big)\big),\\
\widehat{P}(\tau)&=\frac{1}{n_1}\big( a _ { 3 1 }  \big(\widehat{M}_1(0, \tau)-\widehat{M}_2(0, \tau)+a_{32}\big(-\widehat{\partial_x M}_1(0, \tau)+\widehat{\partial_x M}_2(0, \tau)\big)\\
&+a_{33}\big(-\widehat{\partial_x^2 M}_1(0, \tau)+\widehat{\partial_x^2 M}_2(0, \tau)+\frac{Z^2}{2} \widehat{M}_2(0, \tau)+Z \widehat{\partial_x M}_2(0, \tau)\big)\big).
\end{array}
\end{equation}
Therefore, from \eqref{1deriva}-\eqref{deriK}-\eqref{M}-\eqref{N} we get
\begin{equation}\label{todaderi}
D_2 \mathcal H(\tilde{\bold{u}}_0, \tilde{\bold{u}})(\bold{w})= \bold{w}- \psi(t)(\mathcal K(\psi^2_T\partial_x(\tilde{u}{w}_1)) +R_\beta M, \mathcal K(\psi^2_T\partial_x(\tilde{v}{w}_2))+ \mathcal L^1_\beta N + \mathcal L^2_\beta P).
\end{equation}

Now, we show that $\| D_2 \mathcal H(\tilde{\bold{u}}_0, \tilde{\bold{u}}) -I\|_{X_1}<1$ and therefore $D_2 \mathcal H(\tilde{\bold{u}}_0, \tilde{\bold{u}})$ will be invertible. Thus, we need to  estimate every component in \eqref{todaderi} in the norm of $X_1^1(\mathbb R^ 2)$. For  the first component, by \eqref{M}, definition of $M_i$ at \eqref{deriFj},  Lemmas \eqref{tec3} \eqref{est_fgh}, and Lemmas \eqref{duhamel}, \eqref{psiT} and \eqref{bilinear}  we obtain for $s=1$
\begin{equation}\label{estima1}
 \begin{array}{ll}
\|M\|_{H^{\frac{s+1}{3}}(\mathbb R_t)}&\leq  D_1 \sum_{j=0}^2 \|\partial^j_x M_i(0, \tau)\|_{H^{\frac{s+1-j}{3}}(\mathbb R_t)}\leq D_1 \sum_{j=0}^2 \|\partial^j_x M_i(x, \tau)\|_{C(\mathbb R_x: H^{\frac{s+1-j}{3}}(\mathbb R_t))}\\
&\leq  D_2 (T^r)^2\|\tilde{\bold{u}}\|_{X^{s, b,\beta, \sigma}}\|\bold{w}\|_{X^{s, b,\beta, \sigma}}\leq D_2 (T^r)^2\|\tilde{\bold{u}}\|_{X_1}\|\bold{w}\|_{X_1} .
\end{array}
\end{equation}
with $D_i$ generic positive constants depending of $Z$ and from the estimative-constants in Lemmas \eqref{duhamel}, \eqref{psiT} and \eqref{bilinear}. Thus, from  Lemmas \eqref{duhamel}, \eqref{psiT}, \eqref{bilinear} and Lemma \eqref{slr} and \eqref{estima1} we get for $s=1$,
\begin{equation}\label{estima2}
 \begin{array}{ll}
\| \psi(t) \mathcal K(\psi^2_T\partial_x(\tilde{u}{w}_1)) +\psi(t) R_\beta M\|_{C(\mathbb R_t;H^s (\mathbb R_x))}&\leq C_1 \| \psi^2_T\partial_x(\tilde{u}{w}_1))\|_{Y^{s, b, \beta, \sigma}} + C_2\|M\|_{H^{\frac{s+1}{3}}(\mathbb R_t)}\\
&\leq D_3 (T^r)^2\|\tilde{\bold{u}}\|_{X_1}\|\bold{w}\|_{X_1}.
\end{array}
\end{equation}
Similarly we obtain for $s=1$ that
\begin{equation}\label{estima3}
 \begin{array}{ll}
\sum_{j=0}^2\| \psi(t)\partial_x^j \mathcal K(\psi^2_T\partial_x(\tilde{u}{w}_1)) +\psi(t) \partial_x^j R_\beta M\|_{C(\mathbb R_x;H^{\frac{s+1-j}{3}} (\mathbb R_t))}&\leq C_1 \| \psi^2_T\partial_x(\tilde{u}{w}_1))\|_{Y^{s, b, \beta, \sigma}} + C_2\|M\|_{H^{\frac{s+1}{3}}(\mathbb R_t)}\\
&\leq D_3 (T^r)^2\|\tilde{\bold{u}}\|_{X_1}\|\bold{w}\|_{X_1}.
\end{array}
\end{equation}
 Next, we obtain again by Lemmas \eqref{duhamel},\eqref{psiT} \eqref{slr} and \eqref{estima1} that for $s=1$
\begin{equation}\label{estima4}
 \begin{array}{ll}
\| \psi(t)\mathcal K(\psi^2_T\partial_x(\tilde{u}{w}_1)) +\psi(t)R_\beta M\|_{X^{s, b,\beta, \sigma}}&\leq C_1 \| \psi^2_T\partial_x(\tilde{u}{w}_1))\|_{Y^{s, b, \beta, \sigma}} + C_2\|M\|_{H^{\frac{s+1}{3}}(\mathbb R_t)}\\
&\leq D_3 (T^r)^2\|\tilde{\bold{u}}\|_{X_1}\|\bold{w}\|_{X_1}.
\end{array}
\end{equation}
Therefore from \eqref{estima2}-\eqref{estima3}-\eqref{estima4} we have for the first component,
$$
\| \psi(t)\mathcal K(\psi^2_T\partial_x(\tilde{u}{w}_1)) +\psi(t)R_\beta M\|_{X_1^1}\leq D_3 (T^r)^2\|\tilde{\bold{u}}\|_{X_1}\|\bold{w}\|_{X_1}.
$$
Similarly for the  second component,
$$
\| \mathcal K(\psi^2_T\partial_x(\tilde{v}{w}_2))+ \mathcal L^1_\beta N + \mathcal L^2_\beta P\|_{X_1^1}\leq D_3 (T^r)^2\|\tilde{\bold{u}}\|_{X_1}\|\bold{w}\|_{X_1}.
$$
Now, by the proof of Theorem 1.1, we know $\tilde{\bold{u}}\in B_\gamma( \bold{0})=\{\bold{p}\in X_1: \|\bold{p}\|_{X_1}<\gamma\}$ ($\gamma=2c\|\bold{u}_0\|_{H^1(\mathcal G)}$) and $ 2D_3 (T^r)^2\gamma<\frac{1}{16}$. Therefore,
$$
\| D_2 \mathcal H(\tilde{\bold{u}}_0, \tilde{\bold{u}})(\bold{w})-\bold{w}\|_{X_1}\leq 2D_3 (T^r)^2\|\tilde{\bold{u}}\|_{X_1}\|\bold{w}\|_{X_1}<\frac{1}{16}\|\bold{w}\|_{X_1}.
$$
Therefore, from operator's theory we conclude that $D_2 \mathcal H(\tilde{\bold{u}}_0, \tilde{\bold{u}})$ is invertible. 

Next, it is not difficult to show that $\mathcal H$ is of class $C^2$. Therefore, there exists a unique continuous map of class $C^2$, $\Phi:\mathcal W_0\to X_1$, defined on an open ball $\mathcal W_0=B_\delta(\tilde{\bold{u}}_0)$ of $\tilde{\bold{u}}_0$ such that $\Phi(\tilde{\bold{u}}_0)= \tilde{\bold{u}}$ and $\mathcal H(\bold{z}_0, \Phi(\bold{z}_0))=\bold{0}$ for all $\bold{z}_0=(p_0, q_0)\in \mathcal W_0$. Hence, for $\bold{z}= \Phi(\bold{z}_0)$ it follows from the definition of the functional $\mathcal H$ that $\bold{z}$ satisfies the equation
$$
\bold{z}(t)=\psi(t)\bold S_\beta(t){\bold z}_0+ \psi(t)D(\bold{z}),\quad t\in \mathbb R
$$
with $D(\bold{z})$ defined for $\bold{z}=(z_1,z_2)\in X_1$ by \eqref{duha2}, and $H, F,G$ by the formulas in \eqref{fgh}, with $\hat{F_j}$ determined by $F_1(p_0, z_1, x, t)$ and $F_2(q_0, z_2, x, t)$ in \eqref{Fj}.

In the following we show that the following mapping data-solution associated to \eqref{kdv3}-\eqref{Zcondition}
\begin{equation}\label{map-sol}
 \begin{array}{ll}
\Psi: B_{\frac{\delta}{2}}(\bold{u}_0)\subset H^1(\mathcal G)\cap \mathcal N_{0,Z}(1)&\to X_1(T)\\
\hskip1.2in \bold{j}=(f_0,g_0)&  \mapsto \bold{z}= \Phi(\bold{z}_0)|_{\mathcal G\times [0, T]}
\end{array}
\end{equation}
is of class $C^2$, where $\bold{z}_0=(p_0, q_0)\in B_\delta(\tilde{\bold{u}}_0)$ represents the even-extension of $\bold{j}$, namely, $p_0, q_0$ are even functions on whole of the line with $p_0|_{H^1(\mathbb R^+)}=f_0$, $q_0|_{H^1(\mathbb R^-)}=g_0$ and $\|p_0\|_{H^1(\mathbb R)}\leq 2 \| f_0\|_{H^1(\mathbb R^+)}$, $\|q_0\|_{H^1(\mathbb R)}\leq 2 \| g_0\|_{H^1(\mathbb R^-)}$. We note that we can choose without loss of generality that $\tilde{\bold{u}}_0$ is the par-extension of $\bold{u}_0$. For showing that $\Psi$ in \eqref{map-sol} is of class $C^2$ we will write it as a composition of $C^2$-maps. We start by considering the even-extension mapping 
$$
\mathcal E: B_{\frac{\delta}{2}}(\bold{u}_0)\subset H^1(\mathcal G)\cap \mathcal N_{0,Z}(1)\to B_\delta(\tilde{\bold{u}}_0)
$$
which is well defined because $\|\mathcal E(\bold{j})-\tilde{\bold{u}}_0\|_{H^s(\mathbb R)\times H^s(\mathbb R)}= \|\mathcal E(\bold{j}-\bold{u}_0)\|_{H^s(\mathbb R)\times H^s(\mathbb R)}\leq \sqrt{2}  \|\bold{j}-\bold{u}_0)\|_{H^s(\mathcal G)}<\sqrt{2}\frac{\delta}{2}<\delta$. Moreover, it is not difficult to see that $\mathcal E$ is a Lipschitz mapping of class $C^\infty$. Next, we consider the linear  restriction-mapping 
$$
 \begin{array}{ll}
\mathcal R: X_1&\to X_1(T)\\
\hskip0.3in\bold{w}& \mapsto \bold{w}|_{\mathcal G \times [0,T]}
\end{array}
$$
which is well-defined and continuous. Indeed, we have $\|\mathcal R \bold{w}\|_{X_1(T)}\leq 2 \|\bold{w}\|_{X_1}$. The proof of the former inequality follows by using the restriction-norm associated to every norm in $X_1$. For instance, in  the case of the norm $\|\cdot\|_{C([0, T]; H^1(\mathcal G))}$ we have for $\bold{w}=(f,g)\in X_1$ and  for every $t\in [0,T]$ fixed, $$\|f(t)\|_{H^1(\mathbb R^{+})}=inf\{\|q\|_{H^1(\mathbb R_x)}: q|_{\mathbb R^{+}}=f(t)\}\leq \|f(t)\|_{H^1(\mathbb R_x)}\leq \|\bold{w}\|_{C(\mathbb R_t; H^1(\mathbb R_x))}$$ and $$\|g(t)\|_{H^1(\mathbb R^{-})}\leq \|\bold{w}\|_{C(\mathbb R_t; H^1(\mathbb R_x))}.$$ Therefore, $\|\mathcal R \bold{w}\|_{C([0, T]; H^1(\mathcal G))}\leq 2 \|\bold{w}\|_{X_1}$. Moreover, $\mathcal R$ is a $C^\infty$-mapping. Lastly, since $\Psi= \mathcal R\circ \Phi\circ \mathcal E$ follows that $\Psi$ is of class $C^2$. This finishes the proof.

\end{proof}

\section{Nonlinear instability}\label{6}

The focus of this section is to show the   nonlinear instability result in Theorem \ref{Nins}. For convenience of the reader we give a brief review of the results in  \cite{AC} which will be sufficient in our proof. 

We consider the family of stationary profiles  for the KdV model on a balanced graph  given by  $(\phi_{\bold e}(x))_{\bold e\in \bold E}=U_{Z}=(u_{-}, u_{+})\in D(A_Z)$ with $u_{-}=(\phi_{-})_{\bold e\in \bold E_{-}}$, $u_{+}=(\phi_{+})_{\bold e\in \bold E_{+}}$ defined in \eqref{u+} and $Z\neq 0$. Next, we suppose  for $\bold e\in \bold E$, that $u_{\bold e}$ satisfy formally  the KdV equation in \eqref{kdv3}  and define
\begin{equation}\label{stat3}
	v_{\bold e}(x,t)\equiv u_{\bold e}(x,t) -\phi_{\bold e}(x).
\end{equation}
Then,  for each $\bold e\in \bold E$ we have the equation 
\begin{equation}\label{stat4}
	\partial_t  v_{\bold e}= \alpha_ \bold e \partial_x ^3v_\bold e + \beta_ \bold e\partial_x v_\bold e + 2 \partial_x (v_\bold e \phi_\bold e) +  \partial_x (v^2_\bold e).
\end{equation}
Thus, we have that the system  (abusing the notation)
\begin{equation}\label{stat5}
	\partial_t  v_{\bold e}(x,t)= \alpha_ \bold e \partial_x ^3v_\bold e(x,t) + \beta_ \bold e \partial_x v_\bold e(x,t) + 2 \partial_x (v_\bold e(x,t) \phi_\bold e(x)), 
\end{equation}
represents the linearized equation for the KdV in \eqref{kdv3} around $(\phi_{\bold e}(x))_{\bold e\in \bold E}$. Next, we looking for a {\it growing mode solution} of \eqref{stat5} with the form 
$$
v_{\bold e}(x,t)=e^{\lambda t} \psi_{\bold e}\;\; \text{and}\;\; \text{Re} (\lambda) >0.
$$
In other words, we need to solve the formal system for $\bold e\in \bold E$  (we recall $(\alpha_ \bold e)_{\bold e\in \bold E}=(\alpha_{+})$, $(\beta_ \bold e)_{\bold e\in \bold E}=(\beta_{+}) $, $\alpha_{+}>0$ and $\beta_{+}<0$),
\begin{equation}\label{stat6}
	\lambda \psi_{\bold e}=-\partial_x\mathcal L_{\bold e}  \psi_{\bold e},\qquad \mathcal L_{\bold e}=-\alpha_{+}\partial_x^2-\beta_{+}-2\phi_{\bold e},
\end{equation}
Next, we write our eigenvalue problem in \eqref{stat6} in an Hamiltonian matrix. Indeed, for 
$\psi=(\psi_{-}, \psi_{+})$ with $\psi_{-}=( \psi_{\bold e})_{\bold e\in \bold E_{-}}$ and  $\psi_{+}=( \psi_{\bold e})_{\bold e\in \bold E_{+}}$,  we set $(\mathcal L_{\bold e})_{\bold e\in \bold E}=(\mathcal L_{-}, \mathcal L_{+})$ where
\begin{equation}\label{stat7} 
	\begin{aligned}
		\mathcal L_{-}\psi_{-}&=(-\alpha_{+}\partial_x^2\psi_{\bold e}-\beta_{+}\psi_{\bold e}-2\phi_{\bold e}\psi_{\bold e})_{\bold e\in \bold E_{-}},\\
		\\
		\mathcal L_{+}\psi_{+}&=(-\alpha_{+}\partial_x^2\psi_{\bold e} -\beta_{+}\psi_{\bold e}-2\phi_{\bold e}\psi_{\bold e})_{\bold e\in \bold E_{+}}.
	\end{aligned}
\end{equation}
Thus, eigenvalue problem in \eqref{stat6} can be written in a Hamiltonian vectorial form
\begin{equation}\label{stat8} 
	\lambda\left(\begin{array}{c} \psi_{-} \\ \psi_{+}\end{array}\right) =\left(\begin{array}{cc} -\partial_x \mathcal L_{-}& 0 \\0 & -\partial_x \mathcal L_{+}\end{array}\right)\left(\begin{array}{c} \psi_{-}\\ \psi_{+}\end{array}\right)\equiv NE\left(\begin{array}{c} \psi_{-}\\ \psi_{+}\end{array}\right)
\end{equation}
where we are  identifying $ \mathcal L_{\pm}$ as a  $n\times n$-diagonal matrix and $N$, $E$  are  $2n\times 2n$-diagonal matrix defined by
\begin{equation}\label{stat10} 
	N=\left(\begin{array}{cc} -{\bf{\partial_x}}I & 0 \\ 0  & -{\bf{\partial_x}}I \end{array}\right),\quad E=\left(\begin{array}{cc} \mathcal L_{-}& 0 \\0 & \mathcal L_{+}\end{array}\right),
\end{equation} 
with $I$ being the $n\times n$-identity matrix and $\bf{\partial_x}$ the $n\times n$-diagonal matrix ${\bf{\partial_x}}=\text{diag}\Big(\partial_x, ..., \partial_x)$

If we denote by $\sigma(NE)=\sigma_p(NE)\cup \sigma_{ess}(NE)$ the spectrum  of $NE$ (namely, $\lambda \in \sigma_p(NE)$ if $\lambda$ is isolated with finite multiplicity, and $\sigma_{ess}(NE)$ is the essential spectrum), the later discussion
suggests the usefulness of the following definition:

\begin{definition}
	The stationary vector solution $(\phi_{\bold e})_{\bold e\in \bold E}\in D(H_Z)$    is said to be \textit{linearly  stable} for model \eqref{kdv3} if the spectrum of $NE$, $\sigma(NE)$, satisfies $\sigma(NE)\subset i\mathbb{R}.$
	Otherwise, the stationary solution $(\phi_{\bold e})_{\bold e\in \bold E} $ is said to be \textit{linearly unstable}.
\end{definition}

It is standard to show that $\sigma(NE)$ is symmetric with respect  to both the real and imaginary axes and $ \sigma_{ess}(NE)\subset i\mathbb{R}$ by supposing $N$ skew-symmetric and $E$ self-adjoint   (see, for instance, \cite[Lemma 5.6 and Theorem 5.8]{GST}). Thus, by considering for $\bold{u}=(u_ \bold e)_{\bold e\in \bold E}$ the  notation
\begin{equation*}
	\bold{u}=( u_{1, -},...,u_{n, -}, u_{1, +},...,u_{n, +}),
\end{equation*} 
we define  the set of elements of $L^2(\mathcal G)$ continuous at the graph-vertex $\nu=0$ as
\begin{equation}\label{conti} 
	\mathcal C =\{\bold{u} \in L^2(\mathcal G): u_{1, -}(0-)=...=u_{n, -}(0-)=u_{1, +}(0+)=...=u_{n, +}(0+)\}.
\end{equation}
Then from Lemma 6.4 and Proposition 7.4  in  \cite{AC}, we obtain that for the following domain
\begin{equation}\label{E_0}
	D( E )= \Big\{u\in H^2(\mathcal G) \cap \mathcal C:   \sum_{\bold e \in \bold E_{+}} u'_{\bold e}(0+)-\sum_{\bold e \in \bold E_{-}} u'_{\bold e}(0-)=Zn u_{1, +}(0+)\Big\},
\end{equation} 
$E$ in \eqref{stat10} is a self-adjoint operator. Moreover, $N$ in  \eqref{stat10} will be skew-symmetric in the domain $H^1(\mathcal G) \cap \mathcal C$. Hence, by the comments above  follows that will be equivalent to say that $(\phi_{\bold e})_{\bold e\in \bold E}\in D(A_Z)$ is  \textit{linearly stable} if $\sigma_p(NE)\subset i\mathbb{R}$, and it is linearly unstable if $\sigma_p(NE)$ contains points $\lambda$ with  $\text{Re} (\lambda)>0.$ We note that $D(A_Z)\cap \mathcal C\subset D( E )$ and $(\phi_{\bold e})_{\bold e\in \bold E}=U_{Z}\in D( E )$.  

Thus from Theorems 4.4, 4.6, and 6.1 in  \cite{AC}, we have the following linear instability result.

\begin{theorem}\label{ins}  Let $Z\neq 0$.  For $\alpha_{+}>0$,  $\beta_{+}<0$, and $-\frac{\beta_{+}}{\alpha_+}>\frac{Z^2}{4}$, we consider the profiles $\phi_{\pm}$  in \eqref{soli6}.  Define $U_{Z}= (\phi_{\bold e})_{\bold e\in \bold E}\in D(A_Z)$ with $\phi_{\bold e}= \phi_{-}$ for $\bold e\in \bold{E}_{-}$ and   $\phi_{\bold e}= \phi_{+}$ for $\bold e\in \bold{E}_{+}$. Then, 
	$$
	\Phi_{Z}(x,t)=U_{Z}(x)
	$$ 
	defines a family  of linearly unstable stationary solutions for the  Korteweg-de Vries equation in \eqref{kdv3}. Moreover, the operator $NE$ in \eqref{stat10} has a real positive eigenvalue, namely, there is $\lambda>0$, $\Psi_\lambda\in D(A_Z)\cap \mathcal C$, $\Psi_\lambda\neq 0$, such that $NE \Psi_\lambda=\lambda \Psi_\lambda$.
\end{theorem}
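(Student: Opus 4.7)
\textbf{Proof plan for Theorem \ref{ins}.}
My plan is to produce a real positive eigenvalue of the Hamiltonian operator $NE$ defined in \eqref{stat10} together with an eigenfunction of sufficient regularity, and then observe that this forces linear instability by definition. The proof naturally splits into three steps: a symmetry reduction that lowers the $2n$-dimensional block operator to a scalar problem on the whole line with a defect at $0$; a spectral count for the reduced self-adjoint operator; and an application of a KdV-type instability-index formula that converts the spectral count into a real positive eigenvalue of the Hamiltonian.

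\emph{Step 1 (Symmetry reduction).} Since $U_Z$ is constant over each half-bundle $\bold E_\pm$ (all ``minus'' edges carry the same profile $\phi_-$, similarly on $\bold E_+$), the operator $NE$ commutes with the natural permutation action on $\bold E_-$ and on $\bold E_+$. Looking for eigenfunctions in $\mathcal C$ forces equal components on edges of the same type, so one may identify $\psi=(\psi_-,\psi_+)$ with a single function $\widetilde\psi$ on $\mathbb R$, defined by $\widetilde\psi|_{(-\infty,0)}=\psi_-$ and $\widetilde\psi|_{(0,+\infty)}=\psi_+$. The vertex constraints in $D(E_0)$ from \eqref{E_0} translate into a jump condition $\widetilde\psi'(0+)-\widetilde\psi'(0-)=Z\,\widetilde\psi(0)$ (a standard $\delta$-interaction of strength $Z$), and the additional condition from $D(A_Z)$ gives a second jump condition at the level of $\widetilde\psi''$. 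The full profile $\widetilde\phi(x)$ assembled from $\phi_\pm$ is exactly the shifted KdV soliton truncated and reflected so that $\widetilde\phi$ is the even translate of the sech$^2$-profile for $Z>0$ (a full bump) or its translate with a notch for $Z<0$.

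\emph{Step 2 (Morse index of the reduced $E$).} On the reduced space, $E$ acts as the Schr\"odinger-type operator $\widetilde{\mathcal L}=-\alpha_+\partial_x^2-\beta_+-2\widetilde\phi$ with $\delta$-coupling of strength $nZ$ at $0$. Using the explicit sech$^2$-form of $\widetilde\phi$ one can compute the negative Morse index $n(\widetilde{\mathcal L})$ and the kernel. I would do this by writing $\widetilde{\mathcal L}$ as a rank-one perturbation of the whole-line P\"oschl--Teller Schr\"odinger operator (whose spectrum is explicit: two simple negative eigenvalues and essential spectrum $[-\beta_+,\infty)$) and tracking how the $\delta$-perturbation and the truncation/reflection shift the discrete eigenvalues. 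The outcome I expect, consistent with Theorem 6.1 of \cite{AC}, is $n(\widetilde{\mathcal L})\ge 2$ with a one-dimensional kernel spanned by the translation mode $\widetilde\phi\,'$ (or, for $Z\ne 0$, no kernel at all), yielding an excess of negative directions over kernel directions.

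\emph{Step 3 (From Morse index to a real positive eigenvalue).} Apply the Pelinovsky/Kapitula--Kevrekidis--Sandstede instability index for KdV-type Hamiltonian operators $JL$ with $J=-\partial_x$ and $L=\widetilde{\mathcal L}$. The key identity is
\[
k_r + k_c + k_i^- \;=\; n(\widetilde{\mathcal L})-n(D),
\]
where $k_r$ is the number of positive real eigenvalues of $JL$, $k_c$ counts complex quadruples, $k_i^-$ counts negative Krein-signature purely imaginary pairs, and $D$ is a finite-dimensional matrix obtained by restricting $\widetilde{\mathcal L}^{-1}$ to the subspace corresponding to constrained conserved quantities. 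Computing $n(D)$ from the derivatives of the mass and Hamiltonian with respect to natural parameters and comparing with the Morse index found in Step 2, the right-hand side is positive, forcing $k_r\ge 1$, i.e.\ the existence of $\lambda>0$ with eigenfunction $\Psi_\lambda\ne 0$. The regularity $\Psi_\lambda\in D(A_Z)\cap\mathcal C$ follows from the smoothness of $\widetilde\phi$ on each edge and elliptic bootstrapping in the eigenvalue equation, which gives $\Psi_\lambda\in H^3$ on each edge and verifies all the vertex conditions in $D(A_Z)$ by construction of the reduced problem. Finally, linear instability in the sense introduced before the statement is immediate once a positive real eigenvalue exists.

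\emph{Main obstacle.} The delicate point is Step 2: the truncation-plus-reflection construction of $\widetilde\phi$, combined with the $\delta$-strength $nZ$, means that the standard closed-form analysis for the full-line sech$^2$ potential does not apply verbatim, and one must carefully track how the eigenvalues of $\widetilde{\mathcal L}$ move as $Z$ varies. The kernel characterization (needed to ensure that the matrix $D$ in the index formula is nondegenerate) is the most technical piece; the natural way to handle it is to write the generalized kernel of $NE$ in terms of derivatives of $U_Z$ with respect to symmetries and parameters and invoke the condition $-\beta_+/\alpha_+>Z^2/4$ to exclude degenerate configurations.
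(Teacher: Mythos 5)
First, note that the paper does not actually prove Theorem \ref{ins}: it is imported wholesale from the authors' earlier work via the sentence ``Thus from Theorems 4.4, 4.6, and 6.1 in \cite{AC}, we have the following linear instability result.'' So there is no in-paper proof to measure your argument against; what you have written is an attempt to reprove the main result of \cite{AC}. Your three-step outline (permutation-symmetry reduction, Morse index of the reduced self-adjoint operator, then a Hamiltonian index count converting negative directions of $E$ into a real positive eigenvalue of $NE$) is the standard architecture for such statements and is broadly in the spirit of the criterion developed in \cite{AC}, so the overall strategy is reasonable.

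However, as it stands the proposal has genuine gaps. (i) The two decisive steps are not carried out: Step 2 ends with ``the outcome I expect'' and Step 3 asserts, rather than computes, the sign of the right-hand side of the index identity; the entire content of the theorem lives in exactly these computations. (ii) Step 1 contains errors: membership in $\mathcal C$ only constrains the boundary values at $\nu=0$, not the functions themselves, so it does not ``force equal components on edges of the same type.'' The correct statement is that $NE$ commutes with the permutation action and one may restrict to the invariant (symmetric) subspace, on which the vertex condition in \eqref{E_0} reduces to $\widetilde\psi'(0+)-\widetilde\psi'(0-)=Z\,\widetilde\psi(0)$, a $\delta$-interaction of strength $Z$, not $nZ$ as you wrote (the factor $n$ cancels on both sides); one should also remark that an eigenvalue found in the symmetric sector lifts to one for the full operator, which is what makes the reduction legitimate for an existence claim. (iii) Most seriously, the Kapitula--Kevrekidis--Sandstede/Pelinovsky index formula you invoke in Step 3 is built on translation invariance: the generalized kernel of $JL$ and the constraint matrix $D$ are generated by derivatives of the profile with respect to symmetries and parameters. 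On a star graph with a $\delta$-coupling at the vertex, translation invariance is broken, $\widetilde\phi\,'$ fails the vertex condition and hence is not in the kernel of the reduced operator, and $J=-\partial_x$ is not boundedly invertible on the relevant domain, so the formula cannot be applied off the shelf. Supplying a count valid in this vertex-coupled, non-translation-invariant setting is precisely the nontrivial input that Theorems 4.4 and 4.6 of \cite{AC} provide, and it is the piece your plan leaves unaddressed.
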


The strategy of proof for Theorem \ref{Nins} is to use the linear instability result  in Theorem \ref{ins}, the approach by Henry {\it {et al.}} \cite{HPW82}, and that the mapping data-solution associated to the IVP in \eqref{kdv3} is of class $C^2$ on $H^1(\mathcal G)\cap \mathcal N_{0,Z}(1)$ (see Theorem \ref{dependence} and \eqref{trace0}). For convenience of the reader, we establish the following theorem which is the link to obtain nonlinear instability from a linear instability result (see \cite{HPW82, ANGNATI2016,AngLop08}).

\begin{theorem}[Henry \emph{et al.} \cite{HPW82}]
	\label{henry1} Let $Y$ be a Banach space
	and $\Omega\subset Y$ an open set containing $0$. Suppose
	$\mathcal T:\Omega\to Y$ has $\mathcal T(0)=0$, and for some $p>1$ and continuous
	linear operator $\mathcal L$ with spectral radius $r(\mathcal L)>1$
	we have
	$
	\|\mathcal T(x)-\mathcal Lx\|_Y=O(\|x\|_Y^p)\;\;as\;\;x\to 0.
	$
	Then $0$ is unstable as a fixed point of $\mathcal T$.
\end{theorem}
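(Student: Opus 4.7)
The plan is the classical ``linearization implies instability'' argument. The idea is to exhibit, for every $\delta>0$, an initial point $x_0\in Y$ with $\|x_0\|_Y<\delta$ whose iterates $x_n:=\mathcal T^n(x_0)$ are well approximated by the linear iterates $\mathcal L^n x_0$, which grow essentially at rate $r(\mathcal L)^n$ and therefore exceed some fixed (small) threshold $\epsilon>0$ at some time $n$. The remainder $R:=\mathcal T-\mathcal L$, being of order $\|x\|_Y^p$ with $p>1$, contributes only lower-order errors.

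First I would prepare the linear side. By the standard renorming lemma, for any preassigned $\varepsilon>0$ one may replace $\|\cdot\|_Y$ by an equivalent norm with $\|\mathcal L\|\leq r(\mathcal L)+\varepsilon$. Since $r(\mathcal L)>1$, the compact set $\sigma(\mathcal L)$ contains a point $\mu$ with $|\mu|=r(\mathcal L)$, necessarily lying on the boundary of $\sigma(\mathcal L)$; hence $\mu$ belongs to the approximate point spectrum, so there exist unit vectors $u_k\in Y$ (after complexification if $\mu\notin\mathbb R$) with $\|(\mathcal L-\mu I)u_k\|\to 0$. The telescoping identity $\mathcal L^j-\mu^j I=(\mathcal L-\mu I)\sum_{i=0}^{j-1}\mu^i\mathcal L^{j-1-i}$ then yields $\|\mathcal L^j u_k-\mu^j u_k\|\leq j\,\|\mathcal L\|^{j-1}\,\|(\mathcal L-\mu I)u_k\|$, which for each fixed $j$ tends to $0$ as $k\to\infty$.

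Next I would run the nonlinear iteration. Fix $\varepsilon_*>0$ small, depending only on $p$, the constant $C$ in $\|R(x)\|\leq C\|x\|^p$, $|\mu|$, and $\varepsilon$. Given $\delta>0$, choose $J$ so large that $\eta:=\varepsilon_*/|\mu|^J<\delta$, and then $k$ so large that $\|\mathcal L^j u_k\|$ lies within $\tfrac14|\mu|^j$ of $|\mu|^j$ for all $1\leq j\leq J$. Setting $x_0=\eta u_k$ gives $\|x_0\|=\eta<\delta$ and $\tfrac34\,\eta|\mu|^j\leq\|\mathcal L^j x_0\|\leq\tfrac54\,\eta|\mu|^j$ for $j\leq J$. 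The Duhamel-type identity
\[
x_n-\mathcal L^n x_0=\sum_{j=0}^{n-1}\mathcal L^{n-1-j}\,R(x_j),
\]
combined with $\|R(x)\|\leq C\|x\|^p$ and a bootstrap assuming $\|x_j\|\leq 2\|\mathcal L^j x_0\|$ for $j<J$, leads after summing the resulting geometric series to an estimate of the form $\|x_J-\mathcal L^J x_0\|\leq C'\,(\eta|\mu|^J)^{p-1}\,\|\mathcal L^J x_0\|=C'\varepsilon_*^{p-1}\,\|\mathcal L^J x_0\|$. Choosing $\varepsilon_*$ small enough that $C'\varepsilon_*^{p-1}\leq\tfrac14$ both closes the bootstrap and yields $\|x_J\|\geq\tfrac34\,\|\mathcal L^J x_0\|\geq\tfrac{9}{16}\,\varepsilon_*$. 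Taking $\epsilon:=\tfrac{9}{16}\,\varepsilon_*$, a quantity depending only on $p$, $C$, $\mu$ and not on $\delta$, completes the proof that $0$ is unstable.

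The main obstacle is that the hypothesis $r(\mathcal L)>1$ does not, by itself, provide an eigenvector with $|\lambda|>1$: in the Banach setting $\mathcal L$ may have purely continuous or residual spectrum outside the unit disk. The resolution is the two-ingredient setup in the linear step: the approximate eigenvector from the approximate point spectrum plays the role of a genuine eigenvector over the finitely many iterates used in the argument, while the renorming keeps $\|\mathcal L^{n-1-j}\|$ comparable to $(|\mu|+\varepsilon)^{n-1-j}$ rather than some much larger power of $\|\mathcal L\|$. Together these ensure that the geometric sum in the error estimate is dominated by $|\mu|^{Jp}$ and that $\|x_J-\mathcal L^J x_0\|/\|\mathcal L^J x_0\|$ reduces to a multiple of $\varepsilon_*^{p-1}$, which is small precisely because $p>1$.
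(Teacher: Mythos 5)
The paper does not actually prove this statement: Theorem \ref{henry1} is imported verbatim from Henry, Perez and Wreszinski \cite{HPW82} and used as a black box (only Corollary \ref{corohen} is proved from it). So there is no in-paper proof to compare against. Your argument is the classical one from \cite{HPW82} itself: renorm so that $\|\mathcal L\|\le r(\mathcal L)+\varepsilon$, take an approximate eigenvector for a point $\mu$ of maximal modulus of $\sigma(\mathcal L)$ (which lies on the boundary of the spectrum, hence in the approximate point spectrum), and control the orbit via the discrete Duhamel identity plus a bootstrap, the gain coming from $p>1$. The quantitative skeleton is sound: provided $\varepsilon<|\mu|^p-|\mu|$, the sum $\sum_{j}(|\mu|+\varepsilon)^{n-1-j}(\eta|\mu|^j)^p$ is dominated by its last term, and the relative error is $O((\eta|\mu|^n)^{p-1})=O(\varepsilon_*^{p-1})$, exactly as you claim.

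Two points need one more line each before the argument is airtight. First, the hypothesis is only asymptotic as $x\to0$, so you must also shrink $\varepsilon_*$ until the ball of radius $\tfrac52\varepsilon_*$ is contained in $\Omega$ and in the region where $\|\mathcal T(x)-\mathcal Lx\|\le C\|x\|^p$ actually holds; otherwise the iterates and the remainder bound are not defined along the orbit. Second, when $\mu\notin\mathbb R$ the approximate eigenvectors live in the complexification, and for a real initial datum $x_0=\eta\,\mathrm{Re}\,u_k$ the two-sided bound $\tfrac34\eta|\mu|^j\le\|\mathcal L^jx_0\|$ can fail at intermediate $j$ (the rotation by $\arg\mu$ can nearly annihilate the real part), so your bootstrap hypothesis $\|x_j\|\le2\|\mathcal L^jx_0\|$ as written may be vacuous or false. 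The standard repair is to bootstrap the absolute bound $\|x_j\|\le\tfrac52\eta|\mu|^j$ (which uses only the upper bound on $\|\mathcal L^jx_0\|$, valid for both real and imaginary parts) and to invoke a lower bound only at the final step $j=J$, where at least one of $\|\mathcal L^J\mathrm{Re}\,u_k\|$, $\|\mathcal L^J\mathrm{Im}\,u_k\|$ is bounded below by a fixed multiple of $|\mu|^J$. With these routine repairs the proof is complete.
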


\begin{remark}
	\label{remHenry}
	The statement in Theorem \ref{henry1} establishes the instability of $0$ as a fixed point of $\mathcal T$; in other words, it shows the existence of points moving away from $0$ under successive applications of $\mathcal T$
\end{remark}

Theorem \ref{henry1} can be recast in a more suitable form for applications to nonlinear wave instability (see also \cite{AngNat2014,ANGNATI2016}).

\begin{corollary}\label{corohen} Let $S:\Omega_1\subset Y\to Y$ be a
	$C^{2}$ map defined in an open neighborhood of a fixed point $\phi$
	of $S$. If there is an element $\mu\in \sigma(S'(\phi))$ with
	$|\mu|>1$ then $\phi$ is an unstable fixed point of $S$.
\end{corollary}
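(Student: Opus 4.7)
The plan is to derive this corollary as a direct application of Theorem \ref{henry1}, after a translation of coordinates that moves the fixed point $\phi$ to the origin and a Taylor expansion that linearizes $S$ around $\phi$. This is the standard device by which the abstract instability statement for $0$ is transported to an arbitrary fixed point of a smooth map.

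Concretely, I would set $\Omega = \{h \in Y : \phi + h \in \Omega_1\}$, which is an open neighborhood of $0 \in Y$, and define $\mathcal T : \Omega \to Y$ by
\begin{equation*}
\mathcal T(h) = S(\phi + h) - \phi.
\end{equation*}
Then $\mathcal T(0) = S(\phi)-\phi = 0$, so $0$ is a fixed point of $\mathcal T$, and any orbit of $\mathcal T$ starting at $h_0$ corresponds to an orbit of $S$ starting at $\phi + h_0$ via the identity $\mathcal T^n(h) = S^n(\phi+h) - \phi$ (which follows by induction, using that $\mathcal T(h) + \phi = S(\phi+h)$). Consequently, instability of $0$ as a fixed point of $\mathcal T$ in the sense of Remark \ref{remHenry} is exactly instability of $\phi$ as a fixed point of $S$.

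Next, I take $\mathcal L = S'(\phi)$, viewed as a bounded linear operator on $Y$. Because $S$ is of class $C^2$, the second-order Taylor expansion at $\phi$ gives
\begin{equation*}
S(\phi + h) = S(\phi) + S'(\phi) h + r(h), \qquad \|r(h)\|_Y = O(\|h\|_Y^{2})
\end{equation*}
as $h \to 0$ in $Y$. Substituting into the definition of $\mathcal T$ yields $\mathcal T(h) - \mathcal L h = r(h)$, hence $\|\mathcal T(h) - \mathcal L h\|_Y = O(\|h\|_Y^{p})$ with $p = 2 > 1$. The spectral hypothesis $\mu \in \sigma(S'(\phi))$ with $|\mu|>1$ implies $r(\mathcal L) \geq |\mu| > 1$. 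All the hypotheses of Theorem \ref{henry1} are therefore met, so $0$ is an unstable fixed point of $\mathcal T$; by the translation above, $\phi$ is an unstable fixed point of $S$.

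The proof is essentially bookkeeping; there is no serious obstacle. The only point that requires a modicum of care is confirming that the $C^2$ hypothesis really does deliver the remainder estimate $\|r(h)\|_Y = O(\|h\|_Y^{2})$ in the Banach-space sense (this is a standard consequence of the mean value inequality applied to $h \mapsto S(\phi+h) - S'(\phi)h$), and verifying that spectral radius controls the relevant eigenvalue of $\mathcal L$, which is immediate from $r(\mathcal L) = \sup\{|\lambda| : \lambda \in \sigma(\mathcal L)\}$.
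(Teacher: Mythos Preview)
Your proof is correct and follows essentially the same route as the paper: translate the fixed point to the origin via $\mathcal T(h)=S(\phi+h)-\phi$, use the $C^{2}$ Taylor expansion to obtain the $O(\|h\|_Y^{2})$ remainder, note that $\mu\in\sigma(S'(\phi))$ with $|\mu|>1$ forces $r(\mathcal L)>1$, and invoke Theorem~\ref{henry1}. Your write-up is in fact slightly more careful than the paper's, since you make explicit the iteration identity $\mathcal T^{n}(h)=S^{n}(\phi+h)-\phi$ that transfers instability of $0$ under $\mathcal T$ back to instability of $\phi$ under $S$.
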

\begin{proof}  For $x\in \Omega\equiv\{y-\phi: y\in \Omega_1\}$ we consider the mapping $\mathcal T(x)\equiv S(x+\phi)-\phi$. Then, clearly, $\mathcal T(0)=0$ and $\mathcal T$ is of class $C^2$ in  $\Omega$. Define $\mathcal D\equiv S'(\phi)$. Then, by hypothesis, there is an eigenvalue $\mu\in \sigma(\mathcal D)$ with $1<|\mu|\leq r(\mathcal D)$. By Taylor's formula  
	$$ \mathcal T(x)=\mathcal T(0)+\mathcal T'(0)x+ O(\|x\|_Y^2)=\mathcal D
	x+O(\|x\|_Y^2)
	$$ 
	provided that $\|x\|_Y \ll 1$. Then Theorem \ref{henry1} implies the existence of $\epsilon_0 > 0$ such that, for any ball $B_\eta(\phi)$, with radius $\eta > 0$ and arbitrarily large $N_0 \in \mathbb N$, there exists $n \geq N_0$ and $y \in B_\eta(\phi)$ such that $\| S^n(y) - \phi \|_Y \geq \epsilon_0 $. This completes the proof.
\end{proof}

Before proving our Theorem \ref{Nins}, we need to specify the particular mapping $S$ in Corollary \eqref{corohen} suitable for our needs. We start by giving the following notation,  the unique solution $\bold{u}$ to the Cauchy problem \eqref{kdv3}-\eqref{Zcondition} with initial datum $\varphi\in H^1(\mathcal G)\cap \mathcal N_{0,Z}(1)$ given by Theorem \ref{theorem1} will denoted as $\bold{u}=\Upsilon(\varphi)\in X_1(T)\subset C([0,T]: H^1(\mathcal G)\cap \mathcal N_{0,Z}(1))$. The former relation follows from the first relation in  \eqref{Zcondition}  and  Sobolev's embedding.

%\textcolor{red}{Our second observation is the content of the following lemma}, \begin{lemma}[Local well-posedness of the linearized problem]\label{lemglobalwp}For every $\psi\in H^1(\mathcal G)\cap \mathcal N_{0,Z}(1)$ and all $T > 0$ there exists a unique solution $\bold{V}$ in the class $X_1(T)$ of the IVP associated to the linearized operator $NE$ around the stationary solution $(\phi_{\bold e})_{\bold e\in \bold E}$, namely,\begin{equation}\label{Cplin}\left\{\begin{aligned}\bold{V}_t &= NE\bold{V},\\\bold{V}(0) &= \psi,\end{aligned}\right.\end{equation}where $N$ and $E$ are defined in \eqref{stat7}-\eqref{stat10}.\end{lemma}\begin{proof}Following   the ideas of the proof for the nonlinear well-posedness on Theorem \ref{theorem1} and from the matrix-operator relation$$NE\psi=H_Z\psi+ \text{diag}((2\partial_x(\phi_\bold e\psi))\delta_{ij}),\quad 1\leq i,j\leq |\bold E_{-}|+ |\bold E_{+}|,$$the result follows by the formulas \eqref{aux1} and \eqref{aux2}, by replacing the nonlinear terms $\mathcal{K}\left(\tilde {u} \tilde{u}_x\right)$ and $\mathcal{K}\left(\tilde {v} \tilde{v}_x\right)$ by $\mathcal{K}\left(\partial_x(\tilde {\psi}_{\bold e}\tilde{\phi}_{\bold e})\right)$.\end{proof}

Let us now define a mapping which plays the role of the operator $ S$ in the abstract Corollary \ref{corohen}. Let $\Phi=(\phi_{\bold e})_{\bold e\in \bold E}$ be  a stationary profile for equation \eqref{kdv3},  and $B$  the ball $B=B_\epsilon (\Phi) = \{ \varphi \in H^1(\mathcal G)\cap \mathcal N_{0,Z}(1) \, : \, \| \varphi - \Phi \|_1 < \epsilon \}$ with $\epsilon > 0$.  For each $\varphi \in B$, set
\begin{equation}\label{defS}
	\begin{aligned}
		S &: B \to H^1(\mathcal G)\cap \mathcal N_{0,Z}(1),\\
		S(\varphi) &:= \Upsilon(\varphi)(1)
	\end{aligned}
\end{equation}
%\begin{equation}
%\label{defS}
%\cS : X \to X, \qquad \cS(\phi) := \zeta_{cT} (u_\phi(T)),
%\end{equation}
where $\Upsilon(\varphi)\in C([0,{1}];  H^1(\mathcal G)\cap \mathcal N_{0,Z}(1))$ is the unique solution to the Cauchy problem \eqref{kdv3}-\eqref{Zcondition}  with $\Upsilon(\varphi)(0)=\varphi$. We are choosing $T=1$ in Theorem \ref{theorem1} without loss of generality. We recall that by the continuity property of the mapping data-to-solution given in Theorem \ref{theorem1},  we can choose $\epsilon $ small enough such that the solution $\Upsilon(\varphi)$ for every $\varphi\in B_\epsilon (\Phi) $ can be defined on $[0,1]$ because  the stationary solution $\Phi$ is defined for all $t\in \mathbb R$. Thus, $S$ is a well-defined mapping.

\begin{lemma}[properties of $S$]
	\label{propS} 
	Let $\Phi=(\phi_{\bold e})_{\bold e\in \bold E}\in D(A_Z)$ be a stationary profile for equation the KdV model in\eqref{kdv3}. The mapping $S$ defined in \eqref{defS} satisfies:
	\begin{itemize}
		\item[(a)] $S(\Phi) = \Phi$.
		\item[(b)] $S$ is twice Fr\'echet differentiable in an open neighborhood of $\Phi$.
		\item[(c)] For every $\varphi \in H^1(\mathcal G)\cap \mathcal N_{0,Z}(1))$ there holds
		\begin{equation}
			\label{devS}
			S'(\Phi) \varphi = \bold{V}_\varphi(1),
		\end{equation}
		where $\bold{V}_\varphi \in X_1(1) $, denotes the unique solution to the following linear Cauchy problem 
		\begin{equation}\label{Cplin}
			\left\{\begin{aligned}
				\frac{d}{dt}\bold{V}(t) &= NE\bold{V}(t),\quad t\in (0,1),\\
				\bold{V}(0) &= \varphi,
			\end{aligned}\right.
		\end{equation}
		with $N$ and $E$ defined in \eqref{stat7}-\eqref{stat10}.		
	\end{itemize}
\end{lemma}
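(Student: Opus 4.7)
My plan is to establish items (a), (b), (c) in order, relying decisively on Theorem \ref{dependence} for the smoothness of the data-to-solution map.

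Item (a) is immediate: because $\Phi$ is a stationary profile of the KdV on $\mathcal G$, the constant-in-time function $\bold u\equiv\Phi$ satisfies \eqref{kdv3}--\eqref{Zcondition} with initial datum $\Phi$, so by the uniqueness part of Theorem \ref{theorem1} we obtain $\Upsilon(\Phi)(t)\equiv\Phi$, and in particular $S(\Phi)=\Phi$.

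For (b), I would factor $S=\mathrm{ev}_1\circ \Upsilon$, where $\mathrm{ev}_1:X_1(1)\to H^1(\mathcal G)\cap\mathcal N_{0,Z}(1)$, $\bold w\mapsto\bold w(1)$, is a bounded linear (hence $C^\infty$) evaluation thanks to the $C([0,1];H^1(\mathcal G))$-summand in the norm of $X_1(1)$ together with the trace conditions built into that space. Since Theorem \ref{dependence} asserts that $\Upsilon$ is of class $C^2$ from a neighborhood of $\Phi$ into $X_1(1)$, the chain rule delivers $S\in C^2$ on this neighborhood.

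For (c), I would compute the directional derivative through the Duhamel representation \eqref{Duhamel}. Writing $\bold u_\epsilon(t):=\Upsilon(\Phi+\epsilon\varphi)(t)$, one has
$$\bold u_\epsilon(t)=\bold S(t)(\Phi+\epsilon\varphi)+\int_0^t \bold S(t-t')\bigl(\bold u_\epsilon\partial_x\bold u_\epsilon\bigr)(t')\,dt'.$$
Differentiating at $\epsilon=0$ and setting $\bold V(t):=\partial_\epsilon\bold u_\epsilon(t)|_{\epsilon=0}$ (which exists by (b)), I use $2u\partial_x u=\partial_x(u^2)$ to arrive at
$$\bold V(t)=\bold S(t)\varphi+\int_0^t \bold S(t-t')\,2\,\partial_x\bigl(\Phi\,\bold V\bigr)(t')\,dt'.$$
This is exactly the mild formulation of \eqref{Cplin}, because the identity $-\partial_x\mathcal L_{\bold e}=\alpha_+\partial_x^3+\beta_+\partial_x+2\partial_x(\phi_{\bold e}\,\cdot\,)$ coming from \eqref{stat7}--\eqref{stat10} exhibits $NE$ as the sum of $A_Z$ and the lower-order multiplier $2\partial_x(\Phi\,\cdot\,)$. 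Uniqueness for this linear mild equation---which I would obtain by reproducing the contraction estimates of Section \ref{4}, with the quadratic term $\tilde u\partial_x\tilde u$ replaced by the linear term $\partial_x(\Phi\bold V)$, a strictly easier estimate since $\Phi\in H^1(\mathcal G)\hookrightarrow L^\infty(\mathcal G)$---identifies $\bold V=\bold V_\varphi$, yielding \eqref{devS}.

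The main technical point I anticipate is verifying that the linear Cauchy problem \eqref{Cplin} admits a unique solution in $X_1(1)$ while preserving the $\delta$-type boundary conditions of \eqref{Zcondition}. Since the linearization is taken around $\Phi\in D(A_Z)$ and the perturbation $2\partial_x(\Phi\,\cdot\,)$ is lower order, the construction of the auxiliary boundary functions $f,g,h$ developed in Sections \ref{4}--\ref{5} adapts essentially verbatim, and the fixed-point argument used for Theorem \ref{theorem1} yields both existence and uniqueness of $\bold V_\varphi$, which is what is needed to close (c).
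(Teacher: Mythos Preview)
Your proposal is correct and follows essentially the same approach as the paper: item (a) via stationarity and uniqueness, item (b) via Theorem~\ref{dependence} (the paper omits your explicit factorization through $\mathrm{ev}_1$ but relies on the same reasoning), and item (c) by differentiating the Duhamel representation \eqref{Duhamel} at $\epsilon=0$ and recognizing the resulting integral equation as the mild form of \eqref{Cplin} via the decomposition $NE=A_Z+2\partial_x(\Phi\,\cdot\,)$. The only cosmetic difference is that the paper invokes the unitary group $W(t)$ from \cite{AC} rather than $\bold S(t)$ from \eqref{grupo}, but these coincide, and your explicit remark on uniqueness for the linearized problem is a welcome addition that the paper leaves implicit.
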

\begin{proof}

	First, it is obvious that  $S(\Phi) = \Phi$.  Now, from Theorem \ref{dependence} (by choosing $\epsilon$ small enough again)  we know that the data-to-solution map $\varphi \mapsto \Upsilon(\varphi)\in  X_1(1)$ is of class $C^2$. Hence, $S$ is twice Fr\'echet differentiable on $B_{\epsilon}(\Phi)$. This proves (b).
	
Next, we obtain the Fr\'echet derivative's formula in \eqref{devS}  by computing the G\^ateaux derivative, namely,
\begin{equation}\label{frechet}
	S'(\Phi) \varphi = \frac{d}{d\epsilon} S(\Phi + \epsilon \varphi) \big|_{\epsilon = 0}= \bold{V}_\varphi(1),
\end{equation}	
	for any arbitrary $\varphi \in  H^1(\mathcal G)\cap \mathcal N_{0,Z}(1)$.  The proof of the equality in \eqref{frechet} is standard and it is  essentially based on a good representation of the solution $\Upsilon(\varphi)$. Some calculations made {\it{ a priori}} and based in the representation induced by the relations \eqref{aux1}-\eqref{aux2}
show us that is possible to obtain the relation in  \eqref{frechet}, but will require long calculations based on strategics of extensions and restrictions of solutions. Thus, by convenience of the reader, we will take advantage of the results established in Propositions  7.8 and 7.10 in Angulo\&Cavalcante \cite{AC} about the unitary group $\{W(t)\}_{t\in \mathbb R}$ generated by $(A_Z, D(A_Z))$ in \eqref{domain8}. In particular, we get the invariance of $H^1(\mathcal G)\cap \mathcal N_{0,Z}(1)$ by $W(t)$. Now,  we observe by definition that  $S(\Phi + \epsilon \varphi) =  \Upsilon(\Phi + \epsilon \varphi) (1)$ and since $\Upsilon$ is of class $C^2$ around $\Phi$ on the Banach space $X_1(1)$ we make the Taylor's expansion
	\begin{equation}
		\label{exp1}
	 \Upsilon (\Phi + \epsilon \varphi) = \Upsilon(\Phi) + \epsilon \Upsilon'(\Phi) \varphi + O(\epsilon^2).
	\end{equation}
Next, we determine	an expression for $\Upsilon'(\Phi) \varphi$ (we recall that $\Upsilon'(\Phi)\in \mathcal L( H^1(\mathcal G)\cap \mathcal N_{0,Z}(1);X_1(1)$)).  Indeed, by using the Duhamel's integral representation for $ \Upsilon (\Phi + \epsilon \varphi)$ we know that
	$$
	\Upsilon(\Phi + \epsilon \varphi)(t) = W(t)(\Phi + \epsilon \varphi) +2\int_0^t W(t-\tau) \Upsilon(\Phi + \epsilon \varphi)(\tau)\partial_x \Upsilon(\Phi + \epsilon \varphi)(\tau) \, d\tau, \qquad \text{for}\;\;t\in [0,1]
	$$
	then from \eqref{exp1}
	\begin{equation}\label{exp2}
	\Upsilon(\Phi + \epsilon \varphi)(t) = W(t)\Phi + 2\int_0^t W(t-\tau) [ \Upsilon(\Phi) \partial_x\Upsilon(\Phi)](\tau)\, d\tau+\epsilon V_{\Phi, \varphi}(t)+ O(\epsilon^2), \qquad \text{for}\;\;t\in [0,1]
	\end{equation}
	where 
$$
	V_{\Phi, \varphi}(t)= W(t)\varphi + 2\int_0^t W(t-\tau) [ \partial_x(\Upsilon(\Phi) (\Upsilon'(\Phi)\varphi)(\tau))]\, d\tau, \qquad \text{for}\;\;t\in [0,1].
	$$
%Therefore, from \eqref{exp1}-\eqref{exp2} we get the following relation for  $\frac{d}{d\epsilon}\Upsilon(\Phi + \epsilon \varphi)} \big)|_{\epsilon = 0}$,
	\begin{equation}\label{exp3}
		V_{\Phi, \varphi}(t)= (\Upsilon'(\Phi) \varphi)(t)\;\; \text{for all} \;\; t\in [0,1],
	\end{equation}
and so $V_{\Phi, \varphi} \in X_1(1)$  and satisfies the integral equation
$$
	V_{\Phi, \varphi}(t)= W(t)\varphi + 2\int_0^t W(t-\tau) [ \partial_x(\Upsilon(\Phi) V_{\Phi, \varphi} (\tau))]\, d\tau,
	$$
	with $V_{\Phi, \varphi}(0)=\varphi$. Then, since $\Upsilon(\Phi)=\Phi$ and for $\zeta=(\zeta_\bold e)$
	$$
	NE\zeta=A_Z\zeta+ 2\text{diag}((\partial_x(\phi_\bold e\zeta_\bold e))\delta_{ij}),\quad 1\leq i,j\leq |\bold E_{-}|+ |\bold E_{+}|,
$$
follows that  $V_{\Phi, \varphi}$ is the solution to the linearized Cauchy problem \eqref{Cplin} in the distributional sense. Lastly, from \eqref{frechet}, \eqref{exp1}, and \eqref{exp3} we get $S'(\Phi) \varphi =\bold{V}_{\Phi, \varphi}(1)$, for any $\varphi\in H^1(\mathcal G)\cap \mathcal N_{0,Z}(1)$. This shows (c) and the lemma is proved.
\end{proof}

We are now able to prove our main instability result.

\begin{proof} $[${\bf{Theorem \ref{Nins}}}$]$
	
	Let us consider the eigenfunction $\Psi_\lambda$ of the linearized operator $NE$ in \eqref{stat10} associated to the positive eigenvalue $\lambda$ given by Theorem \ref{ins}. Then, since $\Psi_\lambda\in D(NE)\cap \mathcal C=D(A_Z)\cap\mathcal C$ follows $\Psi_\lambda\in H^1(\mathcal G)\cap \mathcal N_{0,Z}(1)$. Moreover,  for $\bold{V}(t) = e^{\lambda t} \Psi_\lambda$, $\bold{V}\in C(\mathbb R ; H^3(\mathcal G)\cap \mathcal N_{0,Z}(1))$ and satisfies 
	$$
	\partial_t \bold{V}(t) = \lambda e^{\lambda t} \Psi_\lambda= e^{\lambda t} NE \Psi_\lambda = NE \big( e^{\lambda t} \Psi_\lambda \big) = NE \bold{V}(t),
	$$
	with $\bold{V}(0)=\Psi_\lambda$. Furthermore,  $\bold{V}\in X_1(1)$ in \eqref{X}. In fact, we have that  for $\Psi_\lambda=(\psi_{\bold e})_{\bold e\in \bold E}\in D(A_Z)$, then $\partial^3_x {\psi}_{\bold e}\in L^2(I)$, $I=(-\infty,0)$ or  $I=(0, \infty)$, and also by choosing a cutoff regular  function $\phi(t):\mathbb R\rightarrow \mathbb R$ supported on the set $[-2,2]$, such that $\phi\equiv 1$ on the set $[-1,1]$ we have that each coordinate function of $\phi(t)\bold V$ given by $\phi(t)e^{\lambda t}\psi_{\bold e}(x)$ is in $X^{1,b,\beta,\sigma}(\R^2)$. It follows immediately that $\bold{V}\in X_1(1)$.
	
Now, define $\mu := e^{\lambda}$. This yields by Lemma \ref{propS},
	$$
	S'(\Phi) \Psi_\lambda = \bold{V}  (1) = e^{\lambda} \Psi_\lambda = \mu \Psi_\lambda.
	$$
	This shows that $\mu \in \sigma(S'(\Phi))$ with $|\mu| > 1$ because $\lambda > 0$. Thus, the mapping defined in \eqref{defS} on an open neighborhood of $\Phi$ satisfies the hypotheses of Corollary \ref{corohen}. Therefore, the profiles $\Phi=U_Z$ of either tail or bump type are nonlinearly unstable by the flow of the Korteweg-de Vries model in $H^1(\mathcal G)$-norm. The proof is complete.
\end{proof}

 \vskip0.1in

\noindent
{\bf Acknowledgements.}  J. Angulo was partially funded by CNPq/Brazil Grant and Pronex-FAPERJ. Furthermore,  it author would like to thank to Universidade Federal de Alagoas (UFAL), Macei\'o/Brazil,  by the support and the warm stay where part of the present project was developed. M. Cavalcante  was supported by CNPq 310271/2021-5 and  the Funda\c c\~ao de Amparo \`a Pesquisa do Estado de Alagoas
- FAPEAL, Brazil (Process E:60030.0000000161/2022).

\vskip0.1in
 \noindent
{\bf Data availability statement}. The data that support the findings of this study are available upon request from the authors.

\end{document}